\definecolor{codegreen}{rgb}{0,0.6,0}
\definecolor{codegray}{rgb}{0.5,0.5,0.5}
\definecolor{codeorange}{rgb}{1,0.49,0}
\definecolor{backcolour}{rgb}{0.95,0.95,0.96}
\lstdefinestyle{mystyle}{
    backgroundcolor=\color{backcolour},   
    commentstyle=\color{codegray},
    keywordstyle=\color{codeorange},
    numberstyle=\tiny\color{codegray},
    stringstyle=\color{codegreen},
    basicstyle=\ttfamily\footnotesize,
    breakatwhitespace=false,         
    breaklines=true,                 
    captionpos=b,                    
    keepspaces=true,                 
    numbers=left,                    
    numbersep=5pt,                  
    showspaces=false,                
    showstringspaces=false,
    showtabs=false,                  
    tabsize=2,
    xleftmargin=10pt,
}
\newcommand{\uproman}[1]{\uppercase\expandafter{\romannumeral#1}}
\theoremstyle{remark}
\newtheorem{remark}{Remark}[section]
\theoremstyle{definition}
\newtheorem{theorem}{Theorem}[section]
\newtheorem{example}[theorem]{Example}
\newtheorem{algorithm}[theorem]{Algorithm}
\newtheorem{gessaman rule}[theorem]{Gessaman's rule}
\newtheorem{BTC-rule}[theorem]{BTC-rule}
\newtheorem*{contiex1}{Continuation of Example 1.2}
\DeclareMathOperator{\esssup}{ess-sup}
\DeclareMathAlphabet{\mathcal}{OMS}{cmsy}{m}{n}
\numberwithin{equation}{section}
\newcommand{\q}[1]{`#1'}
\newcommand\restr[2]{{
  \left.\kern-\nulldelimiterspace 
  #1 
  \littletaller 
  \right|_{#2} 
  }}
\newcommand{\littletaller}{\mathchoice{\vphantom{\big|}}{}{}{}}
\title{Data-dependent density estimation for the Fokker-Planck equation in higher dimensions}
\author{Max Jensen\footnote{Mathematics Department, University College London, 25 Gordon Street, London WC1H 0AY, United Kingdom. email: {\tt max.jensen@ucl.ac.uk}} \qquad
Fabian Merle\footnote{Mathematisches Institut,
Universit\"{a}t T\"{u}bingen,
Auf der Morgenstelle 10,
D-72076 T\"{u}bingen, Germany. email: {\tt merle@na.uni-tuebingen.de}} \qquad
Andreas Prohl\footnote{
Mathematisches Institut,
Universit\"{a}t T\"{u}bingen,
Auf der Morgenstelle 10,
D-72076 T\"{u}bingen, Germany. email: {\tt prohl@na.uni-tuebingen.de}}}
\date{Version of the draft: \today}
\begin{document}

\maketitle
\begin{abstract}
We present a new strategy to approximate the global solution of the {\em Fokker-Planck equation} efficiently in higher dimension and show its convergence. The main ingredients are the Euler scheme to solve the associated stochastic differential equation and a histogram method for tree-structured {\em density estimation} on a {\em data-dependent} partitioning of the state space $\mathbb{R}^{d}$.

\medskip

{\em Keywords: Fokker-Planck equation, numerical approximation, tree-structured density estimation, data-dependent partitioning}

\medskip

\end{abstract}


\section{Introduction}
\label{1}

Let $d\in \mathbb{N}$, fix $T>0$, and choose a probability density function (pdf, for short) $p_{0}:\mathbb{R}^{d}\to [0,\infty)$ with $\lVert p_{0} \rVert_{\mathbb{L}^{1}}=1$. In this work, we propose and study a new method that uses probabilistic and statistical tools to approximate the {\em global} solution $\big\{p(t,\cdot)\,;\, t\in [0, T]\big\}$ of the {\em Fokker-Planck equation}
\begin{equation}
\label{eq:fpe}
\begin{split}
\partial_{t}p(t,\mathbf{x}) &= \mathcal{L}^{\ast}p(t,\mathbf{x}) \quad \text{for all } \;\, (t,{\bf x})\in (0,T]\times \mathbb{R}^{d}, \\ 
p(0,\mathbf{x})&=p_{0}(\mathbf{x}) \quad \text{for all} \;\, \mathbf{x}\in \mathbb{R}^{d}\,,
\end{split}
\end{equation}
in higher dimensions $d$, where 
\begin{equation}
\label{eq:operator}
\mathcal{L}^{\ast}p(t,\mathbf{x})=-\sum\limits_{i=1}^{d} \partial_{x_{i}}\left(b_{i}(\mathbf{x})\cdot p(t,\mathbf{x})\right) + \frac{1}{2}\sum\limits_{i,j,k=1}^{d}\partial_{x_{i}}\partial_{x_{j}}\left( \sigma_{ik}(\mathbf{x})\cdot\sigma_{jk}(\mathbf{x})\cdot p(t,\mathbf{x})\right)\,
\end{equation}
is a second-order differential operator, in general, with non-constant coefficients. Under the assumptions stated in Subsection \ref{2.1}, there exists a unique classical solution $p:[0, T]\times \mathbb{R}^{d}\to [0,\infty)$ of \eqref{eq:fpe} which coincides with the pdf of the related stochastic differential equation (SDE) process $\mathbf{X}\equiv\big\{ \mathbf{X}_{t}\,;\, t \in [0, T]\big\}$, which is given by
\begin{equation}
\label{eq:SDE}
\begin{split}
\mathrm{d} \mathbf{X}_{t} &= \mathbf{b}(\mathbf{X}_{t})\mathrm{d}t + \pmb{\sigma}(\mathbf{X}_{t})\mathrm{d}\mathbf{W}_{t} \qquad \text{for all }t\in (0,T] \\ 
\mathbf{X}_{0}&= \mathbf{X}^{0}\,;
\end{split}
\end{equation}
see e.g.~\cite[Prop.~3.3]{pavliotis} and \cite[Chap.~2, Thm.~1.1]{freidlin}. Here, $\mathbf{b}:\mathbb{R}^{d}\to \mathbb{R}^{d}$ and $\pmb{\sigma}:\mathbf{R}^{d}\to \mathbb{R}^{d\times d}$ are smooth bounded functions with entries $b_{i}(\cdot)$ resp.~$\sigma_{ij}(\cdot)$, and $\mathbf{W}\equiv\{\mathbf{W}_{t}\,;\,t\in [0,T]\}$ is a $\mathbb{R}^{d}-$valued Wiener process on a filtered probability space $\bigl(\Omega,\mathcal{F},\{\mathcal{F}_{t}\}_{t\geq 0},\mathbb{P}\bigr)$, and $\mathbf{X}^{0}$ is random and drawn according to $p_{0}$.

\medskip

To numerically solve \eqref{eq:fpe} in higher dimensions $d\gg 1$, different {\em deterministic} and {\em probabilistic} methods have been developed in the last two decades and are the subject of active research. For example, well-known deterministic methods to simulate \eqref{eq:fpe} in dimensions $d\gg 1$, e.g.~include \q{sparse grid} methods \cite{bungartz}, and \q{low-rank approximation} methods \cite{bachmayr1, bachmayr2}, but their efficient use (and theory) is restricted to \q{tensor-sparsity respecting data} $\mathcal{L}^{\ast}$ and $p_{0}$ in \eqref{eq:fpe}; see the recent survey in \cite[Sec.~2]{fabi2}. A different numerical strategy is to use the probabilistic reformulation \eqref{eq:SDE} as a starting point for discretisation and statistical sampling via Monte-Carlo simulations; see e.g.~\cite{gobet}; while the analysis there is exempted from the assumption on \q{tensor-sparsity respecting data}, its straightforward application only yields approximations of values for $p$ from \eqref{eq:fpe} at selected space-time points. This work aims at constructing numerical methods for \eqref{eq:fpe} and $d\gg 1$ that are based on the probabilistic reinterpretation \eqref{eq:SDE} and approximate the \q{entire} function $p(T,\cdot):\mathbb{R}^{d}\to (0,\infty)$.

\medskip

From a practical viewpoint, the new Algorithm \ref{strategy} below in this work stands out due to its simple implementability, its intuitive interpretability, and efficiency; from a theoretical viewpoint, its convergence theory covers {\em non-constant} $\mathcal{L}^{\ast}\equiv \mathcal{L}^{\ast}(\mathbf{x})$ in higher dimensions $d\gg 1$, which allows broader potential applicability, including the simulation of polymer models, for example, see e.g.~\cite{barrett1,barrett2}. Conceptually, its construction combines
\begin{itemize}
\item[a)] 
the (explicit) Euler scheme for \eqref{eq:SDE} on a mesh $\{t_{j}\}_{j=0}^{J}\subset [0,T]$ of fixed step size $\tau=\frac{T}{J}$, $J\in \mathbb{N}$, which determines the $\mathbb{R}^{d}-$valued random variable $\mathbf{Y}^{j+1}$ from
\begin{equation}
\label{eq:euler}
\begin{split}
\mathbf{Y}^{j+1}&=\mathbf{Y}^{j}+\tau \cdot \mathbf{b}(\mathbf{Y}^{j}) + \pmb{\sigma}(\mathbf{Y}^{j})(\mathbf{W}_{t_{j+1}}-\mathbf{W}_{t_{j}})\qquad (j=0,...,J-1)\\
\mathbf{Y}^{0}&=\mathbf{X}^{0}
\end{split}
\end{equation}
to approximate $\mathbf{X}_{t_{j+1}}$ from \eqref{eq:SDE} at time $t_{j+1}= t_{j}+ \tau$. The density $p^{J}$ of the induced measure $\mathbb{P}_{\mathbf{Y}^{J}}$ approximates $p(T,\cdot)$ from \eqref{eq:fpe}; see Theorems \ref{ballythm} and \ref{modballythm}.
\item[b)] a {\em data-dependent} partitioning  $\pmb{\mathcal{P}}^{J,\mathtt{M}}$ of $\mathbb{R}^d$ based on an $\mathtt{M}$-sample $\mathbf{D}^{J}_{\mathtt{M}}:= \{ \mathbf{Y}^{J,m}\}_{m=1}^{\mathtt{M}}$ from \eqref{eq:euler}, and {\em statistically equivalent cells} \cite[Ch.~13]{gyoerfi2} on which the histogram-based estimator $\widehat{\mathtt{p}}^{J,\mathtt{M}}$ for the pdf $p^{J}$ from \eqref{eq:euler} at time $t_{J}=T$ is constructed; see Subsection \ref{3.1}.
\end{itemize}

Such an estimator $\widehat{\mathtt{p}}^{J,\mathtt{M}}$ is advantageous for several reasons:
\begin{enumerate}
\item[(a)] suitability to approximate solutions of \eqref{eq:fpe} for $d \gg 1$, where uniform partitioning strategies tend to exhaust even extensive computational resources in the pre-asymptotic range and therefore lose competitivity.
\item[(b)] automatic construction of {\em adaptive} meshes, where locally appearing finer cells $\mathbf{R}_{r}\in \pmb{\mathcal{P}}^{J,\mathtt{M}}$ adjust to local features of the solution $p^J:\mathbb{R}^{d}\to [0,\infty)$. The  proposed \q{{\em data-dependent}} estimators
$\widehat{\mathtt{p}}^{J,\mathtt{M}}:\mathbb{R}^{d}\to [0,\infty)$ below aim to meet two goals at the same time with the help of a single $\mathtt{M}-$sample
$\mathbf{D}^{J}_{\mathtt{M}}$: to generate an {\em adaptive} mesh $\pmb{\mathcal{P}}^{J,\mathtt{M}}$, and to compute an accurate approximation $\widehat{\mathtt{p}}^{J,\mathtt{M}}$ of $p^J$ on that mesh. In approximation theory, generating an underlying partition which depends on the solution itself is referred to as \q{nonlinear approximation} \cite[Section 3.2]{DV};
well-known deterministic methods in this direction are  \q{adaptive finite elements} that are applicable in
small dimensions $1 \leq d \leq 4$, where criteria trigger a repeated automatic local refinement/coarsening of elements with the help of computed local residuals that leads to a final adapted mesh.
\end{enumerate}
In the vein of (b), Algorithm \ref{strategy} below may be seen as a new probabilistic method for the parabolic problem \eqref{eq:fpe}, which is adaptive in space and whose convergence for general non-constant operators $\mathcal{L}^{\ast}\equiv \mathcal{L}^{\ast}(\mathbf{x})$ follows from Theorem \ref{thmgessamanconvergence} in Subsection \ref{3.2}. Moreover, the Algorithm \ref{strategy} is applicable in higher dimensions $d\gg 1$ -- a property that adaptive finite element methods do not possess. 

\medskip

We now outline the basic steps of our algorithm to solve \eqref{eq:fpe} with the help of the tools that were motivated in (a) and (b) above.

\begin{algorithm}
\label{strategy}
Let $J\in \mathbb{N}$. Fix an equidistant mesh $\{t_{j}\}_{j=0}^{J}\subset [0,T]$ of mesh-size $\tau=\frac{T}{J}$.
\begin{itemize}
\item[1)] {\bf (Initialization)} For $\mathtt{M}\in \mathbb{N}$, generate an {\em i.i.d.}~$\mathtt{M}-$sample $\mathbf{D}_{\mathtt{M}}^{0}:=\{\mathbf{Y}^{0,m}\}_{m=1}^{\mathtt{M}}$ through sampling via $p_{0}$.
\item[2)] {\bf (Discretization \& Sampling)} Generate a $\mathtt{M}-$sample $\mathbf{D}_{\mathtt{M}}^{J}:=\{\mathbf{Y}^{J,m}\}_{m=1}^{\mathtt{M}}$ via {\em i.i.d.}~sample iterates from \eqref{eq:euler}.
\item[3)] {\bf (Splitting rule)} Use $\mathbf{D}_{\mathtt{M}}^{J}$ to get a {\em data-dependent} partitioning $\pmb{\mathcal{P}}^{J,\mathtt{M}}=\{\mathbf{R}_{r};\,r=0,...,\mathtt{R}-1\}$ of $\mathbb{R}^{d}$ into $\mathtt{R}\in \mathbb{N}$ many cells \q{$\mathbf{R}_{r}$}, where each cell contains $k_{\mathtt{M}}\leq \mathtt{M}$ sample iterates.
\item[4)] {\bf (Estimator)} Define
\begin{equation}
\label{eq:estimator}
\widehat{\mathtt{p}}^{J,\mathtt{M}}(\mathbf{x}):=\sum\limits_{r=0}^{\mathtt{R}-1} \frac{k_{\mathtt{M}}}{\mathtt{M}\cdot \mathtt{vol}(\mathbf{R}_{r})}\cdot \mathbbm{1}_{\{\mathbf{x}\in \mathbf{R}_{r}\}}\,,\qquad \mathbf{x}\in \mathbb{R}^{d}\,,
\end{equation}
where $\mathtt{vol}(\mathbf{R}_{r})$ denotes the (possibly infinite) volume of the cell $\mathbf{R}_{r}$.
\end{itemize}
\end{algorithm}

Two specific splitting rules to generate partitions $\pmb{\mathcal{P}}^{J,\mathtt{M}}$ will be detailed in Section \ref{3}: the first rule is due to Gessaman \cite{gessaman} ({\em cf.}~Gessaman's rule \ref{gessamanstrategy}), the second one is a modification of the {\em Binary Tree Cuboid} (BTC)-splitting rule taken from \cite{dunst} ({\em cf.}~BTC-rule \ref{moddunststrategy}). Both splitting rules generate partitions of statistically equivalent cells, meaning that each cell contains the same number of realisations of $\mathbf{D}_{\mathtt{M}}^{J}$. Our main theoretical result in this work is to prove $\mathbb{P}-a.s.$ convergence of Algorithm \ref{strategy} for Gessaman's rule \ref{gessamanstrategy}; see Theorem \ref{thmgessamanconvergence}. More precisely, we prove that for $\widehat{\mathtt{p}}^{J,\mathtt{M}}\equiv \widehat{\mathtt{p}}^{J,\mathtt{M}}_{\mathtt{Ges}}$ in \eqref{eq:estimator},
\begin{align}
\label{eq:convergence}
\lVert p(T,\cdot)-\widehat{\mathtt{p}}^{J,\mathtt{M}}_{\mathtt{Ges}}\rVert_{\mathbb{L}^{1}}\to 0\qquad \mathbb{P}-a.s. \qquad (\text{for } \mathtt{M}\to \infty  \text{ and } \tau \to 0  )\,.
\end{align}

\medskip

The following example of non-constant $\mathcal{L}^{\ast}\equiv \mathcal{L}^{\ast}(\mathbf{x})$ in \eqref{eq:operator} illustrates the generation of adaptive meshes in space by Algorithm \ref{strategy}.

\begin{example}
\label{example1}
Let $d\in \mathbb{N}$ and $T>0$ initially. Consider \eqref{eq:fpe}, which corresponds to the associated SDE \eqref{eq:SDE} with
\begin{align*}
\mathbf{b}(\mathbf{x})=-\mathbf{x}+\pmb{\beta}\,, \qquad \pmb{\sigma}(\mathbf{x})=\sqrt{2}\cdot \varepsilon \cdot \pmb{Id}_{d}\,,
\end{align*}
where $\varepsilon>0$, $\pmb{\beta}=[1,...,1]^{\top}\in \mathbb{R}^{d}$, and $\pmb{Id}_{d}$ denotes the $d-$dimensional identity matrix. Let $p_{0}$ be the pdf of the multivariate normal distribution with mean vector $\mathbf{m}_{0}=[0,...,0]^{\top}\in \mathbb{R}^{d}$ and covariance matrix $\mathbf{C}_{0}=\alpha \cdot \pmb{Id}_{d}$, i.e.,
\begin{equation}
\label{p0ex1}
p_{0}(\mathbf{x})=\frac{1}{(2\pi \alpha)^{\nicefrac{d}{2}}}\cdot \exp \left(-\frac{1}{2\alpha} \cdot \lVert \mathbf{x} \rVert_{\mathbb{R}^{d}}^{2}\right)\,,
\end{equation}
where $\alpha>0$.
The true solution of \eqref{eq:fpe} is given by (see e.g.~\cite[Appendix C]{boffi})
\begin{equation}
\label{sol.ex1}
p(t,\mathbf{x})=\frac{1}{\sqrt{(2\pi)^{d} \cdot \mathtt{det}(\mathbf{C}_{t})}} \exp \left(-\frac{1}{2} ( \mathbf{x} - \mathbf{m}_{t})^{\top} \mathbf{C}_{t}^{-1} (\mathbf{x}-\mathbf{m}_{t}) \right)\qquad (t,\mathbf{x})\in [0,T]\times \mathbb{R}^{d}\,,
\end{equation}
where 
\begin{equation*}
\mathbf{m}_{t}=[1-\exp(-t),...,1-\exp(-t)]^{\top} \in \mathbb{R}^{d}\,,
\end{equation*}
and 
\begin{equation*}
\mathbf{C}_{t}=\left(\alpha \cdot \exp(-2t) + \varepsilon^{2}(1-\exp(-2t)) \right) \cdot \pmb{Id}_{d}.
\end{equation*}
{\bf a)} Now fix $d=2$, $T=0.5$, $\tau=\frac{1}{100}$, $\alpha=0.5$, and $\varepsilon=1$. For different choices of $\mathtt{M}$ and $k_{\mathtt{M}}$, Table \ref{tablegessaman} presents $\mathbb{L}^{\infty}-$errors
for the estimator $\widehat{\mathtt{p}}^{J,\mathtt{M}}\equiv \widehat{\mathtt{p}}^{J,\mathtt{M}}_{\mathtt{Ges}}$ given in \eqref{eq:estimator} constructed via Algorithm \ref{strategy} with Gessaman's rule \ref{gessamanstrategy} in Subsection \ref{3.1.1}. For ease of computation, we present errors in the $\mathbb{L}^{\infty}-$norm in all computations; the theoretical convergence results are obtained in the $\mathbb{L}^{1}-$norm.

\medskip

\begin{table}[h]
\center
\begin{tabular}{lclclclc}
\toprule
 $\mathtt{M}$ & $k_{\mathtt{M}}$ & $\lVert p(T,\cdot)-\widehat{\mathtt{p}}_{\mathtt{Ges}}^{J,\mathtt{M}}\rVert_{\mathbb{L}^{\infty}}$  & Computational time\\
\midrule
 $2^{12}$  & $2^{6}$  & $\approx 0.09$  & $3$ sec\\
 $2^{14}$  & $2^{7}$  & $\approx 0.08$ &  $11$ sec\\
 $2^{16}$  & $2^{8}$  & $\approx 0.05$  & $43$ sec\\
 $2^{18}$  & $2^{9}$  & $\approx 0.04$ &  $172$ sec\\
 $2^{20}$  & $2^{10}$ & $\approx 0.03$ & $698$ sec\\
\bottomrule
\end{tabular}
\captionof{table}{Example \ref{example1} {\bf a)}: $\mathbb{L}^{\infty}-$error for $\widehat{\mathtt{p}}^{J,\mathtt{M}}\equiv \widehat{\mathtt{p}}^{J,\mathtt{M}}_{\mathtt{Ges}} $ in \eqref{eq:estimator}.}
\label{tablegessaman}
\end{table}

{\bf b)} Now fix $d=2$, $T=1$, $\tau=\frac{1}{100}$, $\alpha=\frac{1}{4\pi}$, and $\varepsilon=\frac{1}{5}$. Figures \ref{figex1} resp.~\ref{figex1_1} show snapshots of partitions resp.~related solution profiles. We observe automatic migration/adaption of meshes in the direction given by $\pmb{\beta}$.

\begin{figure}[h!]
\begin{minipage}[t]{.25\textwidth}
\centering
\includegraphics[scale=0.35]{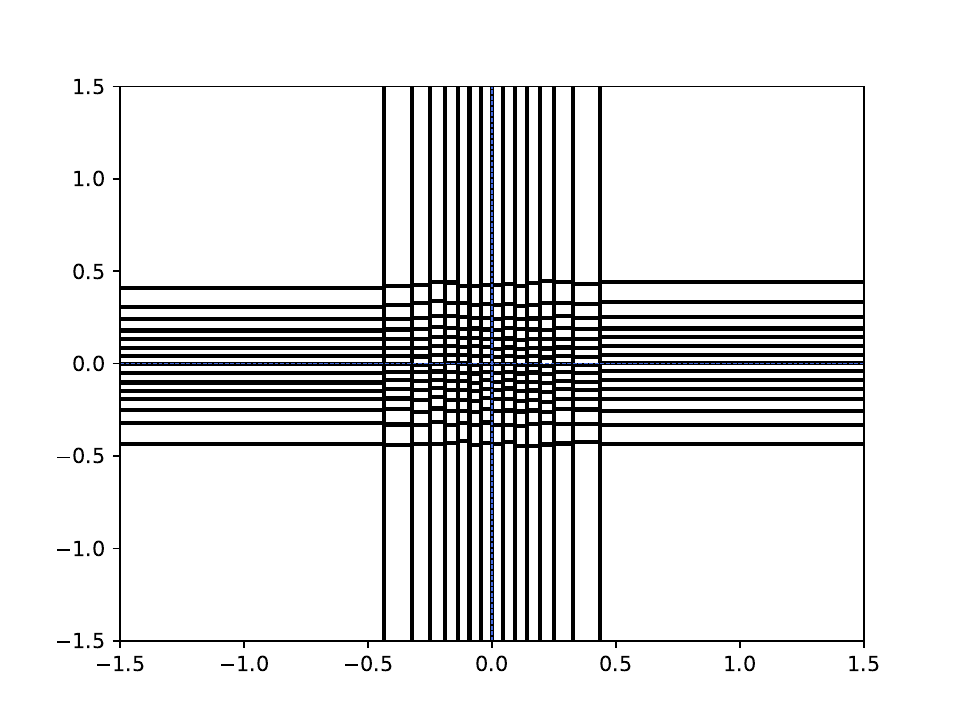}
\captionsetup{labelfont={bf}}
\subcaption{$t=0$, $j=0$}
\end{minipage}
\hspace{.08\linewidth}
\begin{minipage}[t]{.25\textwidth}
\centering
\includegraphics[scale=0.35]{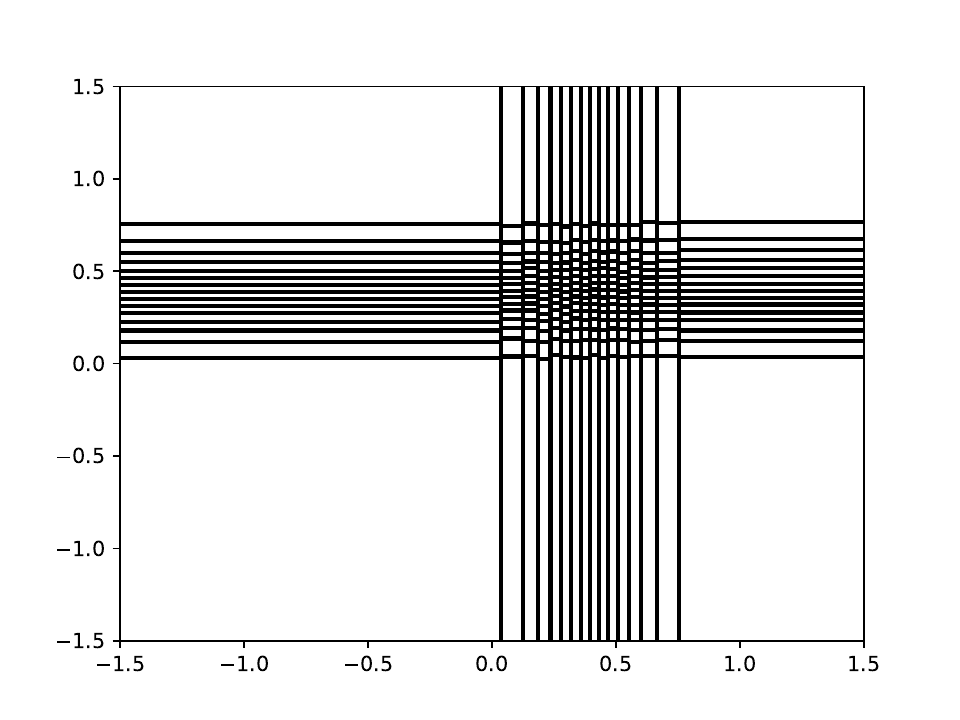}
\captionsetup{labelfont={bf}}
\subcaption{$t=0.5$, $j=50$}
\end{minipage}
\hspace{.08\linewidth}
\begin{minipage}[t]{.25\textwidth}
\centering
\includegraphics[scale=0.35]{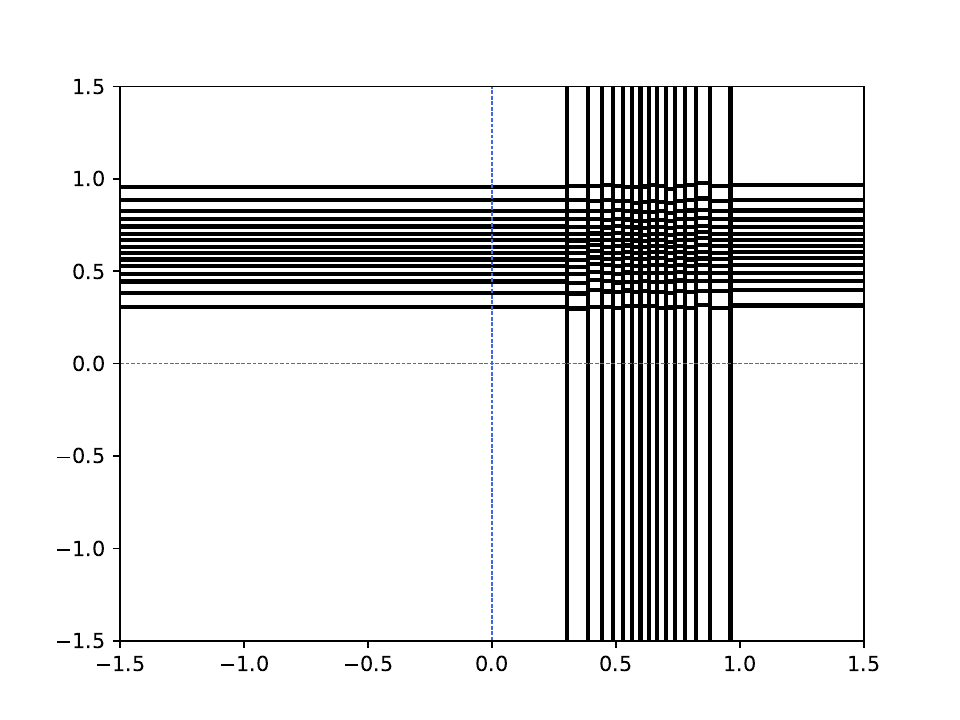}
\captionsetup{labelfont={bf}}
\subcaption{$t=1$, $j=100$}
\end{minipage}

\caption{ Example \ref{example1} {\bf b)}: Partition $\pmb{\mathcal{P}}^{j,\mathtt{M}}_{\mathtt{Ges}}$ from Algorithm \ref{strategy} with Gessaman's rule \ref{gessamanstrategy}; see Subsection \ref{3.1.1} ($\mathtt{M}=2^{16}$, $k_{\mathtt{M}}=2^{8}$).}
\label{figex1}

\end{figure}

\begin{figure}[h!]
\begin{minipage}[t]{.25\textwidth}
\centering
\includegraphics[scale=0.42]{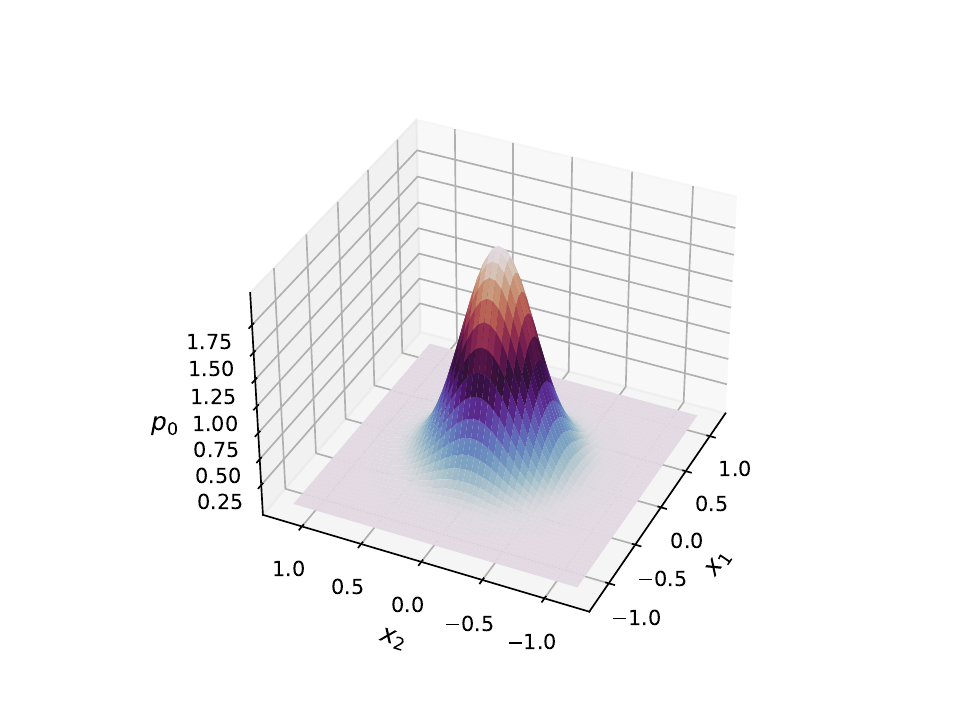}
\captionsetup{labelfont={bf}}
\subcaption{$t=0$, $j=0$}
\end{minipage}
\hspace{.07\linewidth}
\begin{minipage}[t]{.25\textwidth}
\centering
\includegraphics[scale=0.42]{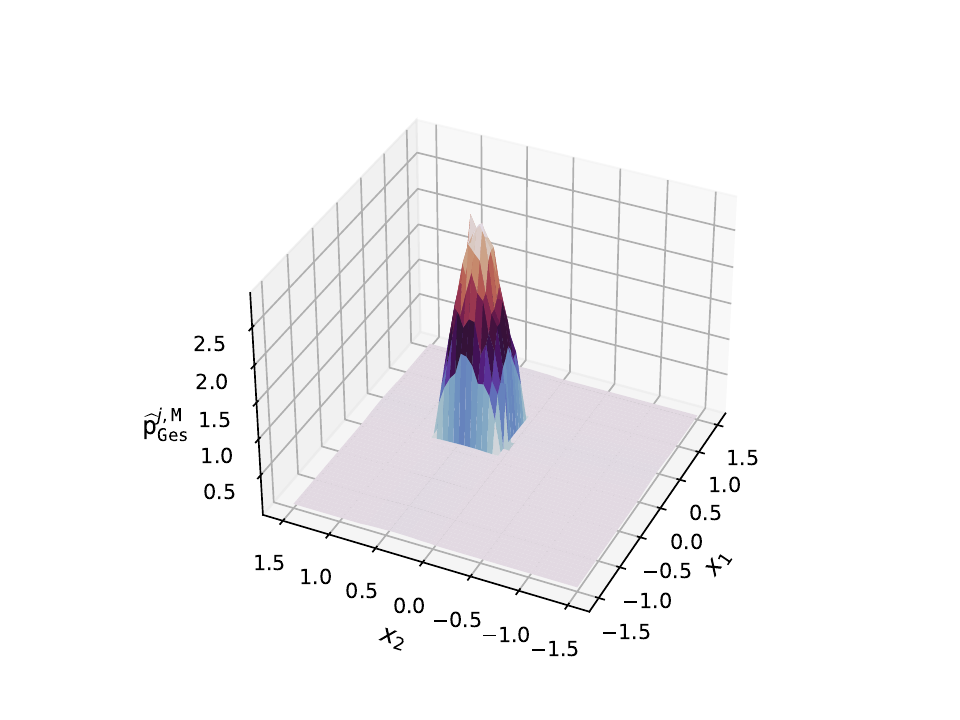}
\captionsetup{labelfont={bf}}
\subcaption{$t=0.5$, $j=50$}
\end{minipage}
\hspace{.07\linewidth}
\begin{minipage}[t]{.25\textwidth}
\centering
\includegraphics[scale=0.42]{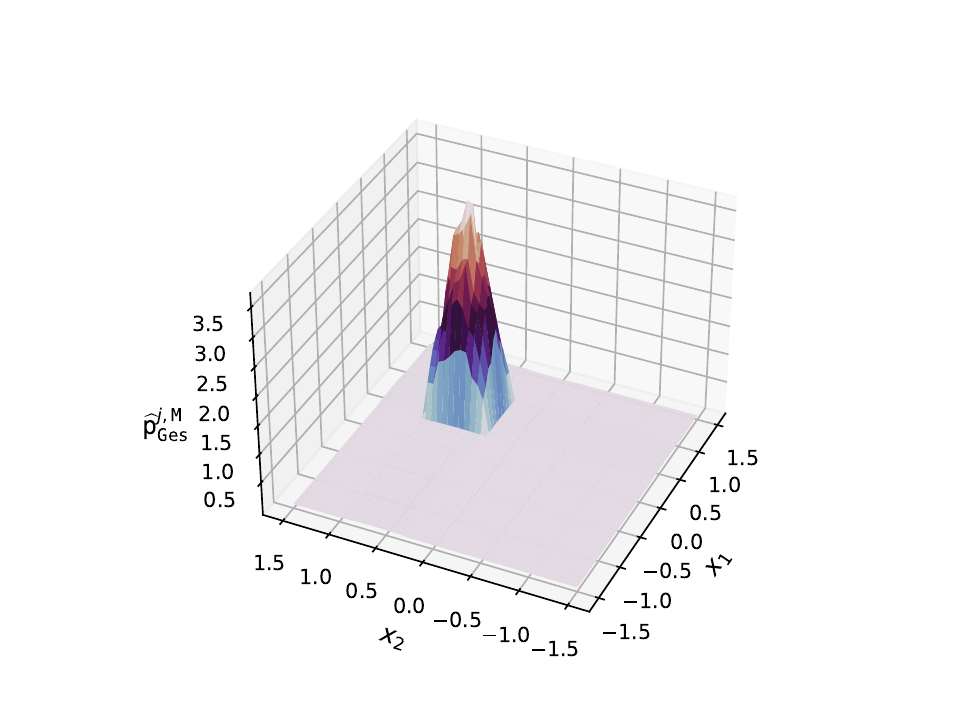}
\captionsetup{labelfont={bf}}
\subcaption{$t=1$, $j=100$}
\end{minipage}

\caption{Example \ref{example1} {\bf b)}: Snapshots for $\widehat{\mathtt{p}}_{\mathtt{Ges}}^{j,\mathtt{M}}$ from Algorithm \ref{strategy} ($\mathtt{M}=2^{16}$, $k_{\mathtt{M}}=2^{8}$).}
\label{figex1_1}

\end{figure}

\end{example}

\medskip

The estimator $\widehat{\mathtt{p}}^{J,\mathtt{M}}_{\mathtt{Ges}}$ is tree-structured, and each node has exactly $\mathtt{N}=\Big\lceil \left(\frac{\mathtt{M}}{k_{\mathtt{M}}}\right)^{\nicefrac{1}{d}} \Big \rceil$ children; see Figure \ref{tree} {\bf (a)}. Starting with $\mathcal{R}_{0,0}^{\mathtt{M}}=\mathbb{R}^{d}$ on \q{level $0$} in the tree in Figure \ref{tree} {\bf (a)}, each subsequent node represents a cell in $\mathbb{R}^{d}$, which is recursively split into $\mathtt{N}$ many equiprobable cells until the last (refinement) level $d$ in the tree is reached. After $d$ Gessaman splits, we obtain a tree of height $d$, where the terminal nodes $\{\mathbf{R}_{r}\,;\, r=0,...,\mathtt{R}_{\mathtt{Ges}}-1\}$ with $\mathtt{R}_{\mathtt{Ges}}:=\mathtt{N}^{d}$ constitute the partition $\pmb{\mathcal{P}}_{\mathtt{Ges}}^{J,\mathtt{M}}$; see Figure \ref{tree} {\bf (a)}. To increase the limited height of this tree, we also propose the alternative BTC-splitting rule \ref{moddunststrategy}, which leads to a binary tree of height $\mathcal{O}\left(\log \mathtt{M}\right)$ that enables a much larger degree of local refinement. Here, the terminal nodes $\{\mathbf{R}_{r}\,;\, r=0,...,\mathtt{R}_{\mathtt{BTC}}-1\}$ with $\mathtt{R}_{\mathtt{BTC}}:=2^{\kappa}=\frac{\mathtt{M}}{k_{\mathtt{M}}}$ constitute the partition $\pmb{\mathcal{P}}_{\mathtt{BTC}}^{J,\mathtt{M}}$; see Figure \ref{tree} {\bf (b)}. This splitting rule in part 3) of Algorithm \ref{strategy} then builds the density estimator $\widehat{\mathtt{p}}^{J,\mathtt{M}}_{\mathtt{BTC}}$ that is detailed in Subsection \ref{3.1.2}.

\medskip

\begin{figure}[h!]
\begin{minipage}[t]{.5\textwidth}
\centering
\begin{tikzpicture}
  [scale=0.3,level distance=40mm,
   every node/.style={fill=red!20,inner sep=1pt},
   level 1/.style={sibling distance=40mm,nodes={fill=red!20}},
   level 2/.style={sibling distance=15mm,nodes={fill=red!20}},
   level 3/.style={sibling distance=28mm,nodes={fill=red!100}}]
  \node {$\mathcal{R}^{\mathtt{M}}_{0,0}$} 
     child {node {$\mathcal{R}^{\mathtt{M}}_{1,0}$ } 
       child { node{$\cdots$} 
         child {node {$\mathcal{R}^{\mathtt{M}}_{d,0}$}}
         child {edge from parent[thick]}
         child{  edge from parent[thick]}
         child {node {$\mathcal{R}^{\mathtt{M}}_{d,\mathtt{N}-1}$}}
       }
       child {edge from parent[thick]
         child[missing]
         child[missing]
       }
       child{  edge from parent[thick]}
       child {node{$\cdots$}}
     }
     child{  edge from parent[thick]}
     child{  edge from parent[thick]}
     child {node {$\mathcal{R}^{\mathtt{M}}_{1,\mathtt{N}-1}$}
     child { node{$\cdots$}} 
     child {edge from parent[thick]}
     child{  edge from parent[thick]}
     child { node{$\cdots$}
      child {node {$\mathcal{R}^{\mathtt{M}}_{d,\mathtt{N}^{d}-\mathtt{N}}$}}
         child {edge from parent[thick]}
         child{  edge from parent[thick]}
         child {node {$\mathcal{R}^{\mathtt{M}}_{d,\mathtt{N}^{d}-1}$}}     
    }
   }   
     ;
\end{tikzpicture}
\captionsetup{labelfont={bf}}
\subcaption{Generation of partition $\pmb{\mathcal{P}}_{\mathtt{Ges}}^{J,\mathtt{M}}$ with $\mathtt{N}$ splits}
\end{minipage}
\begin{minipage}[t]{.5\textwidth}
\centering
\begin{tikzpicture}
  [scale=0.4,level distance=40mm,
   every node/.style={fill=green!20,rectangle,inner sep=1pt},
   level 1/.style={sibling distance=50mm,nodes={fill=green!20}},
   level 2/.style={sibling distance=30mm,nodes={fill=green!20}},
   level 3/.style={sibling distance=40mm,nodes={fill=green!100}}]
  \node {$\mathcal{R}^{\mathtt{M}}_{0,0}$} 
     child {node {$\mathcal{R}^{\mathtt{M}}_{1,0}$ } 
       child { node{$\cdots$} 
         child {node {$\mathcal{R}^{\mathtt{M}}_{\kappa,0}$}}
         child {node {$\mathcal{R}^{\mathtt{M}}_{\kappa,1}$}}
       }
       child {node{$\cdots$}}
     }
     child {node {$\mathcal{R}^{\mathtt{M}}_{1,1}$}
     child { node{$\cdots$}} 
     child { node{$\cdots$}
      child {node {$\mathcal{R}^{\mathtt{M}}_{\kappa,2^{\kappa}-2}$}}
         child {node {$\mathcal{R}^{\mathtt{M}}_{\kappa,2^{\kappa}-1}$}}     
    }
   }   
     ;
\end{tikzpicture}
\captionsetup{labelfont={bf}}
\subcaption {Generation of partition $\pmb{\mathcal{P}}_{\mathtt{BTC}}^{J,\mathtt{M}}$ with binary splits}
\end{minipage}

\caption{$\mathtt{N}-$ary tree for via Gessaman's rule \ref{gessamanstrategy} for $\widehat{\mathtt{p}}^{J,\mathtt{M}}_{\mathtt{Ges}}$ (left) vs. binary tree via BTC-rule \ref{moddunststrategy} for $\widehat{\mathtt{p}}^{J,\mathtt{M}}_{\mathtt{BTC}}$ (right).}
\label{tree}
\end{figure}

In this work, we are interested in complex data $\mathbf{D}_{\mathtt{M}}^{J}$ in the sense of \cite[p.~8]{Brei1}, i.e., data are non-homogeneous and of higher dimension. Here,
\begin{itemize}
\item \q{{\em non-homogeneity}} refers to possibly different relationships between the $d$ variables in different parts of $\mathbb{R}^{d}$, which will be triggered by generators $\mathcal{L}^{\ast}\equiv \mathcal{L}^{\ast}(\mathbf{x})$.
\item The {\em higher the dimensionality}, the sparser and more spread apart the data points are. We approach this difficulty using {\em data-dependent partitions} that foremost resolve places in $\mathbb{R}^{d}$ where data points cluster. Our analysis does not assume that the involved variables are independent --- as is often supposed for related deterministic schemes to perform efficiently. Our strategy exploits \q{{\em dimensionality reduction}} as suggested in \cite[p.~9]{Brei1}; the generated meshing via tree structures fastly segments the data $\mathbf{D}_{\mathtt{M}}^{J}$ into equiprobable cells of the same density, allowing physical interpretation, such as local correlations of the involved $d$ variables.
\end{itemize}

We continue with an example in $d=5$ where the estimators $\widehat{\mathtt{p}}^{J,\mathtt{M}}_{\mathtt{BTC}}$ and $\widehat{\mathtt{p}}^{J,\mathtt{M}}_{\mathtt{Ges}}$ are compared. We observe an increased accuracy of the former, indicating better adapted meshes.

\medskip

\begin{example}
\label{example2}
Let $d=5$ and $T=0.3$. Consider \eqref{eq:fpe}, which corresponds to the associated SDE \eqref{eq:SDE} with
\begin{align*}
\mathbf{b}(\mathbf{x})=\mathbf{0}\,, \qquad \pmb{\sigma}(\mathbf{x})= \frac{\sqrt{2}}{8}\cdot \pmb{Id}_{d}\,,
\end{align*}
where $\mathbf{0}=[0,...,0]^{\top}\in \mathbb{R}^{d}$.
Let $p_{0}$ be the pdf from \eqref{p0ex1} with $\alpha>0$.
The true solution of \eqref{eq:fpe} is given by 
\begin{equation*}
p(t,\mathbf{x})=\frac{1}{\left(2\pi \alpha+\tfrac{\pi}{16} t\right)^{\nicefrac{d}{2}}} \cdot \exp \left(-\frac{16}{32\alpha+t} \cdot \lVert \mathbf{x} \rVert_{\mathbb{R}^{d}}^{2} \right)\qquad (t,\mathbf{x})\in [0,T]\times \mathbb{R}^{d}\,.
\end{equation*}
{\bf a)} For $\tau=\frac{1}{100}$, we consider different choices of $\alpha>0$ in the representation of the initial profile $p_{0}$ in \eqref{p0ex1}. The parameter $\alpha$ again quantifies gradients in $p_{0}$. The comparative plots in Figure \ref{figex13_alphaError} indicate that $\widehat{\mathtt{p}}^{J,\mathtt{M}}_{\mathtt{BTC}}$ reaches a given accuracy with significantly fewer samples than $\widehat{\mathtt{p}}^{J,\mathtt{M}}_{\mathtt{Ges}}$. As we see in Figure \ref{figex13_alphaError} {\bf (a)}, the size $\mathtt{M}$ has to grow when gradients steepen in $p_{0}$ (i.e., $\alpha$ shrinks). Furthermore an initial profile $p_{0}$ with smaller gradients also has a positive impact on the overall performance of Algorithm \ref{strategy}: for a fixed choice of $\mathtt{M}=2^{24}$ and $k_{\mathtt{M}}=2^{9}$, the $\mathbb{L}^{\infty}-$error for both $\widehat{\mathtt{p}}^{J,\mathtt{M}}_{\mathtt{Ges}}$ and $\widehat{\mathtt{p}}^{J,\mathtt{M}}_{\mathtt{BTC}}$ is decreasing for increasing $\alpha$; see Figure \ref{figex13_alphaError} {\bf (b)}.

\begin{figure}[h!]
\begin{minipage}[t]{.4\textwidth}
\centering
\includegraphics[scale=0.5]{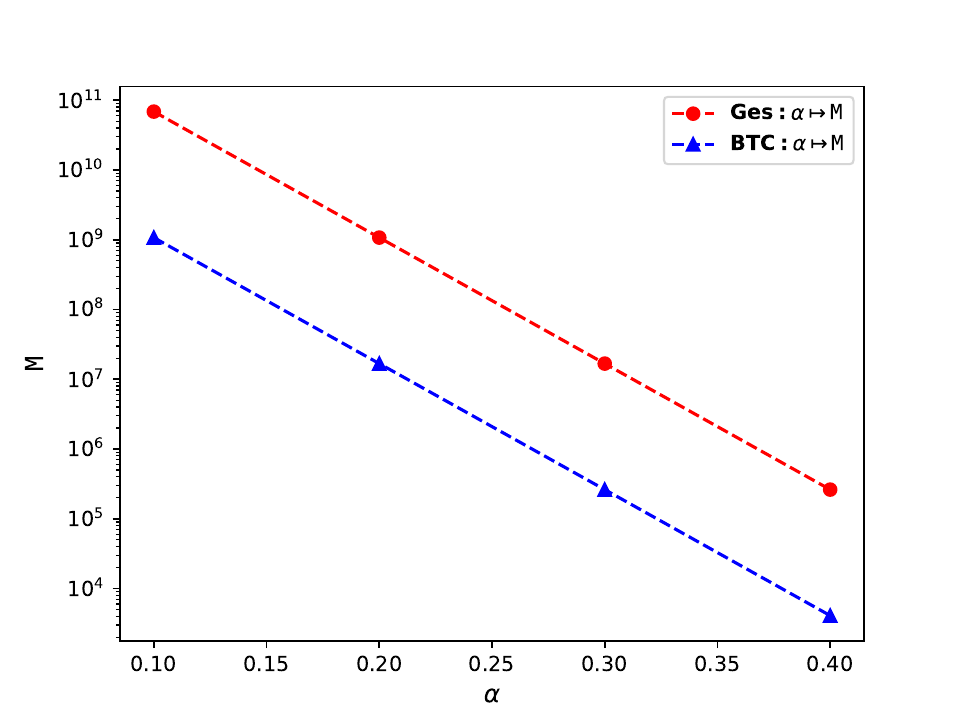}
\captionsetup{labelfont={bf}}
\subcaption{$\alpha\mapsto \mathtt{M}$}
\end{minipage}
\hspace{.1\linewidth}
\begin{minipage}[t]{.4\textwidth}
\centering
\includegraphics[scale=0.5]{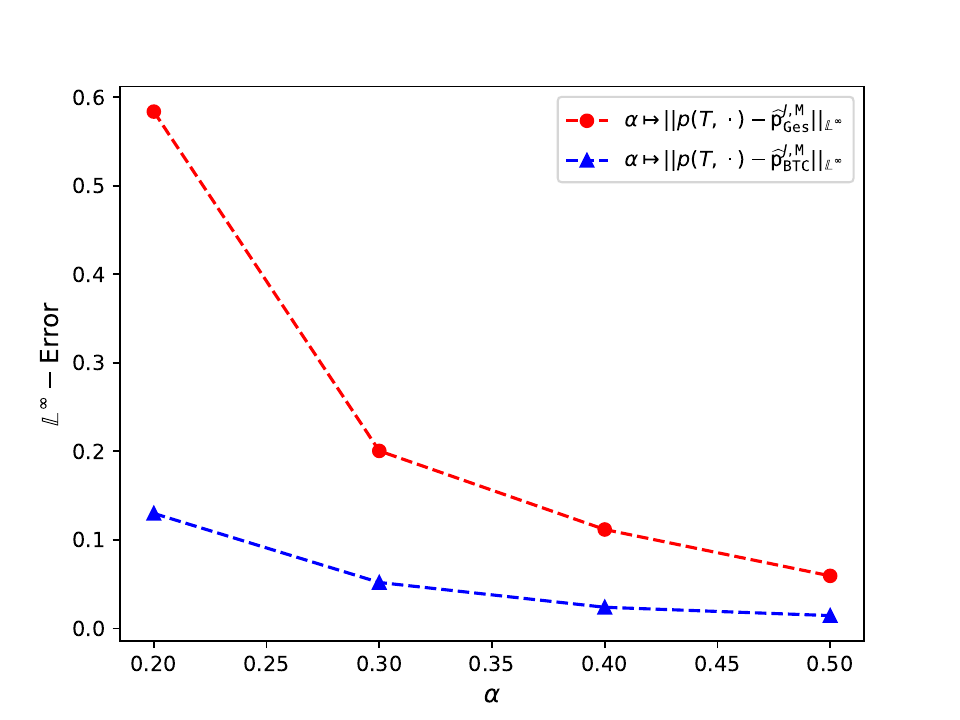}
\captionsetup{labelfont={bf}}
\subcaption{$\alpha\mapsto \lVert p(T,\cdot)-\widehat{\mathtt{p}}^{J,\mathtt{M}} \rVert_{\mathbb{L}^{\infty}}$}
\end{minipage}

\caption{Example \ref{example2} {\bf a)}: {\bf (a)} Minimum $\mathtt{M}$ for different $\alpha$ to have $\mathbb{L}^{\infty}-$error below $0.1$ (with a fixed ratio of $\mathtt{M}=2^{2N}$ and $k_{\mathtt{M}}=2^{N-2}$, $(N\in \mathbb{N})$). {\bf (b)} $\mathbb{L}^{\infty}-$error plots for $\widehat{\mathtt{p}}^{J,\mathtt{M}}\equiv \widehat{\mathtt{p}}^{J,\mathtt{M}}_{\mathtt{Ges}}$ resp.~$\widehat{\mathtt{p}}^{J,\mathtt{M}}\equiv \widehat{\mathtt{p}}^{J,\mathtt{M}}_{\mathtt{BTC}}$ and different values of $\alpha$ ($\mathtt{M}=2^{24}$, $k_{\mathtt{M}}=2^{9}$).}

\label{figex13_alphaError}

\end{figure}

{\bf b)} Let $\alpha=\frac{1}{2\pi}$. For $\tau=\frac{1}{100}$, $\mathtt{M}=2^{26}$ and $k_{\mathtt{M}}=2^{11}$, Figure \ref{partitions5d} shows the partitions $\pmb{\mathcal{P}}_{\mathtt{Ges}}^{J,\mathtt{M}}$ resp.~$\pmb{\mathcal{P}}_{\mathtt{BTC}}^{J,\mathtt{M}}$ of $\mathbb{R}^{5}$ restricted to the plane $\mathbf{P_{1,2}}:=\{(x_{1},x_{2})\,:\, x_{1},x_{2}\in \mathbb{R}\}$. Figure \ref{solutions5d} shows the related approximated solutions $\restr{\widehat{\mathtt{p}}^{J,\mathtt{M}}_{\mathtt{Ges}}}{\mathbf{P_{1,2}}}$ resp.~$\restr{\widehat{\mathtt{p}}^{J,\mathtt{M}}_{\mathtt{BTC}}}{\mathbf{P_{1,2}}}$, and Figure \ref{solutionszoom5d} shows a zoomed view of them from above: while $\restr{\pmb{\mathcal{P}}^{J,\mathtt{M}}_{\mathtt{Ges}}}{ \mathbf{P_{1,2}}}$ in Figure \ref{partitions5d} {\bf (a)} looks like a \q{nested tensorised mesh}, the second choice $\restr{\pmb{\mathcal{P}}^{J,\mathtt{M}}_{\mathtt{BTC}}}{ \mathbf{P_{1,2}}}$ in Figure \ref{partitions5d} {\bf (b)} detects curved features of the solution; see Figure \ref{solutionszoom5d}. This increased resolution leads to smaller errors for the estimator $\widehat{\mathtt{p}}^{J,\mathtt{M}}\equiv \widehat{\mathtt{p}}^{J,\mathtt{M}}_{\mathtt{BTC}}$ in Algorithm \ref{strategy}; see also Figure \ref{figex13_alphaError}.

\begin{figure}[h!]
\begin{minipage}[t]{.4\textwidth}
\centering
\includegraphics[scale=0.5]{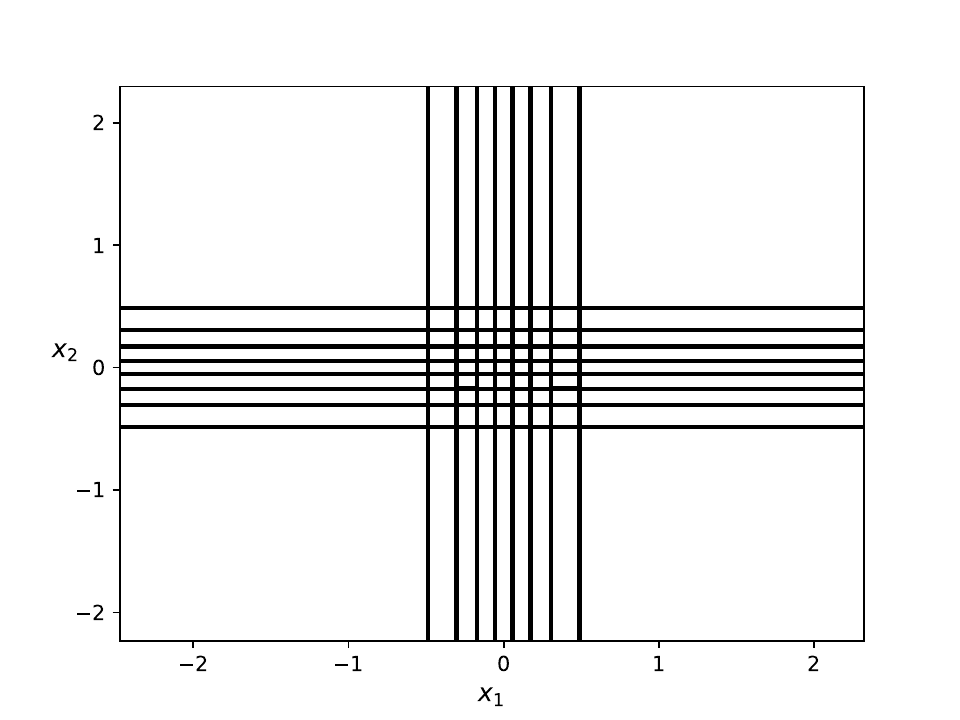}
\captionsetup{labelfont={bf}}
\subcaption{$\restr{\pmb{\mathcal{P}}^{J,\mathtt{M}}_{\mathtt{Ges}}}{ \mathbf{P_{1,2}}}$}
\end{minipage}
\hspace{.1\linewidth}
\begin{minipage}[t]{.4\textwidth}
\centering
\includegraphics[scale=0.5]{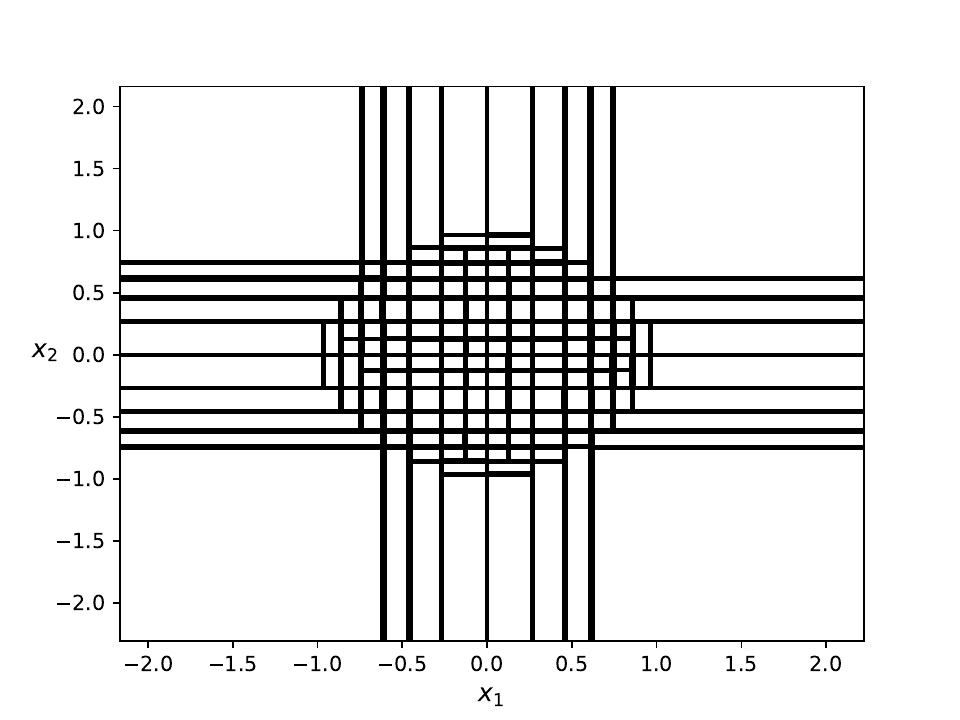}
\captionsetup{labelfont={bf}}
\subcaption{$\restr{\pmb{\mathcal{P}}^{J,\mathtt{M}}_{\mathtt{BTC}}}{ \mathbf{P_{1,2}}}$}
\end{minipage}

\caption{Example \ref{example2} {\bf b)}: Partition $\pmb{\mathcal{P}}^{J,\mathtt{M}}_{\mathtt{Ges}}$ resp.~$\pmb{\mathcal{P}}^{J,\mathtt{M}}_{\mathtt{BTC}}$ generated via {\bf (a)} Gessaman's rule \ref{gessamanstrategy} resp.~{\bf (b)} BTC-rule \ref{moddunststrategy} restricted to $\mathbf{P_{1,2}}$.}
\label{partitions5d}

\end{figure}

\begin{figure}[h!]
\begin{minipage}[t]{.4\textwidth}
\centering
\includegraphics[scale=0.6]{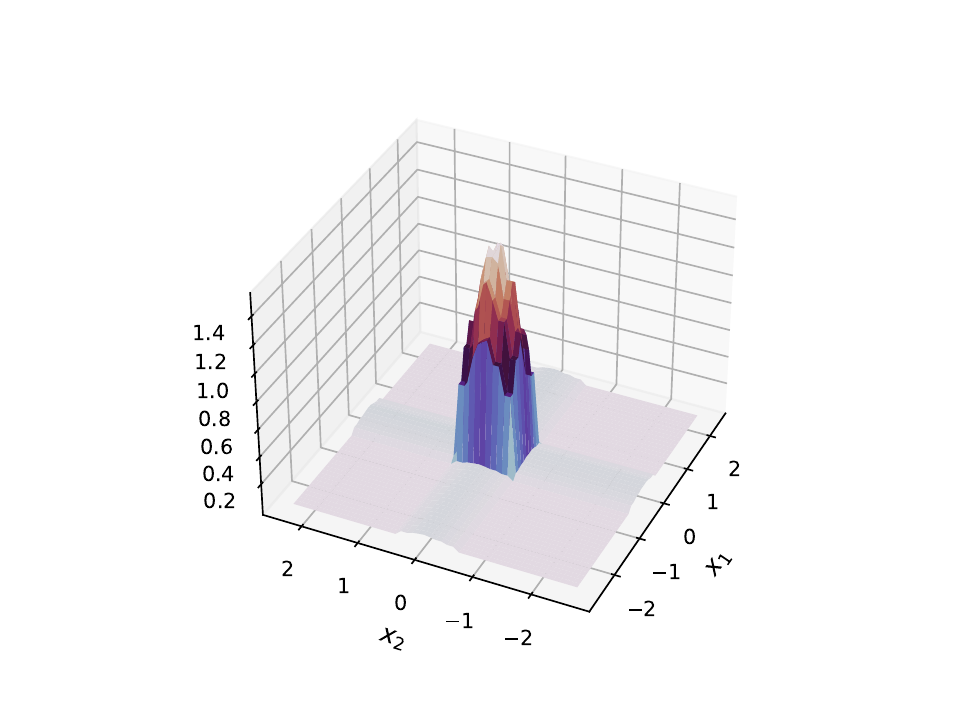}
\captionsetup{labelfont={bf}}
\subcaption{$\restr{\widehat{\mathtt{p}}^{J,\mathtt{M}}_{\mathtt{Ges}}}{\mathbf{P_{1,2}}}$}
\end{minipage}
\hspace{.08\linewidth}
\begin{minipage}[t]{.4\textwidth}
\centering
\includegraphics[scale=0.6]{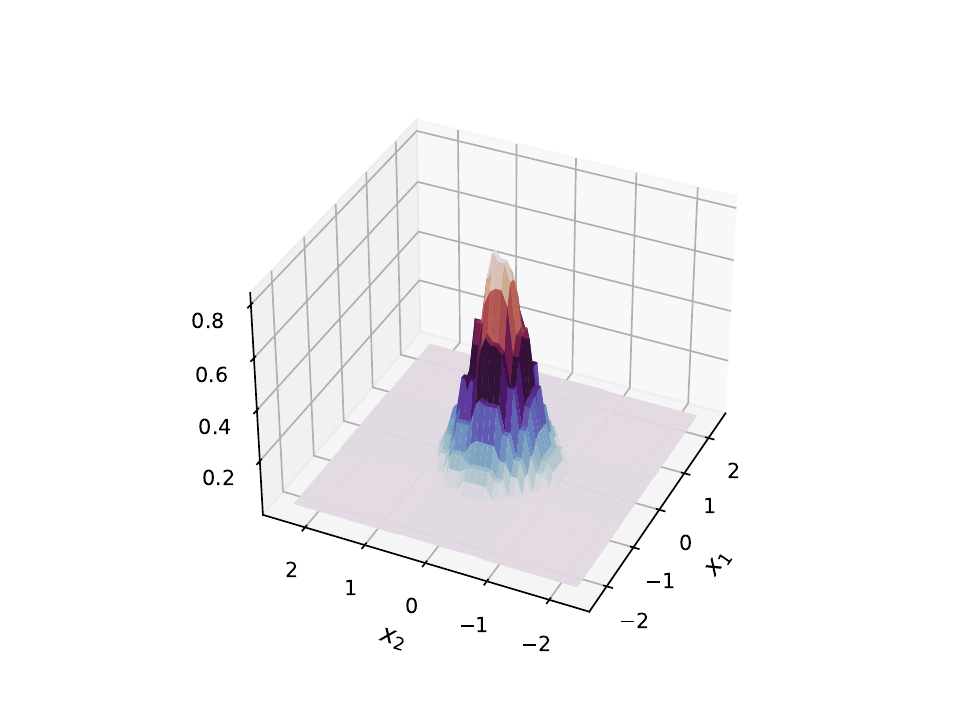}
\captionsetup{labelfont={bf}}
\subcaption{$\restr{\widehat{\mathtt{p}}^{J,\mathtt{M}}_{\mathtt{BTC}}}{\mathbf{P_{1,2}}}$}
\end{minipage}

\caption{Example \ref{example2} {\bf b)}: Approximated solutions {\bf (a)} $\widehat{\mathtt{p}}^{J,\mathtt{M}}_{\mathtt{Ges}}$ resp.~{\bf (b)} $\widehat{\mathtt{p}}^{J,\mathtt{M}}_{\mathtt{BTC}}$ restricted to $\mathbf{P_{1,2}}$.}
\label{solutions5d}

\end{figure}

\begin{figure}[h!]

\begin{minipage}[t]{.25\textwidth}
\centering
\includegraphics[scale=0.38]{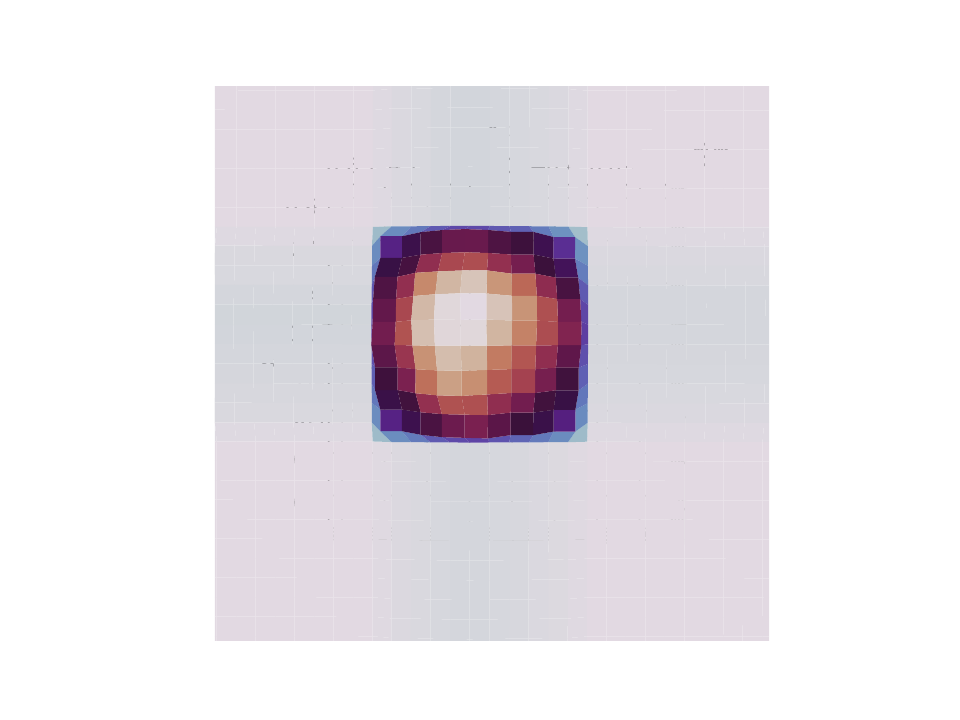}
\captionsetup{labelfont={bf}}
\subcaption{$\restr{\widehat{\mathtt{p}}^{J,\mathtt{M}}_{\mathtt{Ges}}}{\mathbf{P_{1,2}}}$}
\end{minipage}
\hspace{.05\linewidth}
\begin{minipage}[t]{.25\textwidth}
\centering
\includegraphics[scale=0.38]{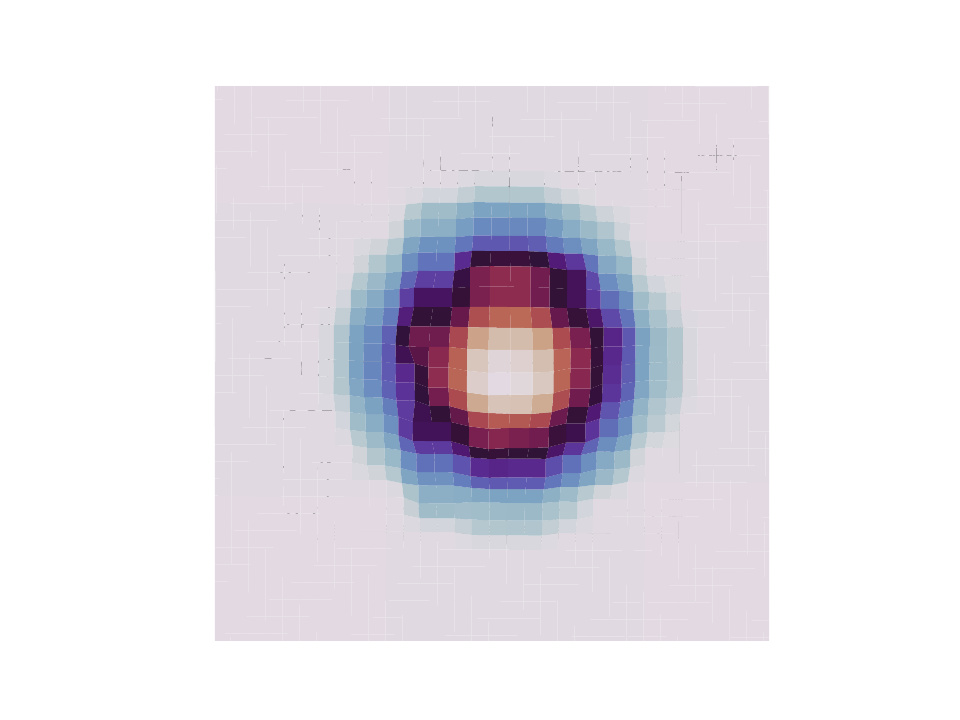}
\captionsetup{labelfont={bf}}
\subcaption{$\restr{\widehat{\mathtt{p}}^{J,\mathtt{M}}_{\mathtt{BTC}}}{\mathbf{P_{1,2}}}$}
\end{minipage}
\hspace{.05\linewidth}
\begin{minipage}[t]{.25\textwidth}
\centering
\includegraphics[scale=0.38]{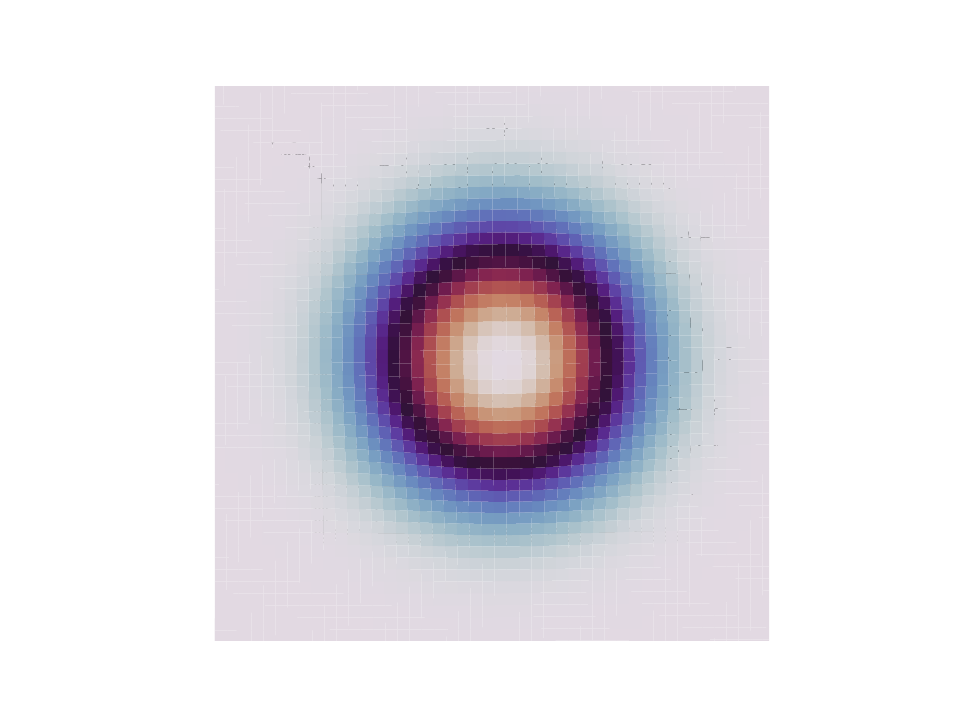}
\captionsetup{labelfont={bf}}
\subcaption{$\restr{p(T,\cdot)}{\mathbf{P_{1,2}}}$}
\end{minipage}

\caption{Example \ref{example2} {\bf b)}: View from above: {\bf (a)} $\widehat{\mathtt{p}}^{J,\mathtt{M}}_{\mathtt{Ges}}$ resp.~{\bf (b)} $\widehat{\mathtt{p}}^{J,\mathtt{M}}_{\mathtt{BTC}}$ resp.~{\bf (c)} $p(T,\cdot)$ restricted to $\mathbf{P_{1,2}}$.}
\label{solutionszoom5d}

\end{figure}

\end{example}

\

The next example discusses the performance of the estimator $\widehat{\mathtt{p}}^{J,\mathtt{M}}_{\mathtt{BTC}}$ in a setting when $\mathcal{L}^{\ast}(\mathbf{x})$ given in \eqref{eq:operator} depends on $\mathbf{x}\in \mathbb{R}^{d}$, and where $d=8$.

\begin{example}
\label{example3}
Let $d=8$ and $T=0.2$. Consider \eqref{eq:fpe}, which corresponds to the associated SDE \eqref{eq:SDE} with
\begin{align*}
\mathbf{b}(\mathbf{x})=[0,0,2,2,2,x_{6},x_{7},x_{8}]^{\top}\,, \qquad \pmb{\sigma}(\mathbf{x})=\frac{1+\lVert \mathbf{x} \rVert_{\mathbb{R}^{d}}}{10}\cdot \pmb{Id}_{d}\,.
\end{align*}
Let $p_{0}$ be the pdf from \eqref{p0ex1} with $\alpha=\frac{1}{2\pi}$. For a fixed choice of $\tau=0.01$, $\mathtt{M}=2^{32}$ and $k_{\mathtt{M}}=2^{10}$, Figure \ref{ex3dynamics} shows approximated solution profiles $\widehat{\mathtt{p}}^{J,\mathtt{M}}_{\mathtt{BTC}}$ when restricted to different coordinate axes. In the style of the notation in Example \ref{example2}, we denote the $x_{i}-$axis by $\mathbf{P_{i}}:=\{x_{i}\,:x_{i} \in \mathbb{R}\}$, $i=1,...,8$. We observe a different structure of $\widehat{\mathtt{p}}^{J,\mathtt{M}}_{\mathtt{BTC}}$ along each coordinate axis mainly due to $\mathbf{b}(\mathbf{x})$, but also $\pmb{\sigma}(\mathbf{x})$: while Figure \ref{ex3dynamics} {\bf (a)} illustrates the smoothing/flattening dynamics of $\restr{\widehat{\mathtt{p}}^{J,\mathtt{M}}_{\mathtt{BTC}}}{\mathbf{P_{1}}}$ (note that $\restr{\mathbf{b}(\mathbf{x})}{\mathbf{P_{1}}}\equiv 0$ in this case), Figure \ref{ex3dynamics} {\bf (b)} shows a convection of the approximated solution in the direction given by $\restr{\mathbf{b}(\mathbf{x})}{\mathbf{P_{4}}}\equiv 2$. Figure \ref{ex3dynamics} {\bf (c)} shows a snapshot of $\restr{\widehat{\mathtt{p}}^{J,\mathtt{M}}_{\mathtt{BTC}}}{\mathbf{P_{8,7}}}$, {\em i.e.}, of $\widehat{\mathtt{p}}^{J,\mathtt{M}}_{\mathtt{BTC}}$ restricted to the $x_{8}-x_{7}-$plane.


\begin{figure}[h!]
\begin{minipage}[t]{.25\textwidth}
\centering
\includegraphics[scale=0.2]{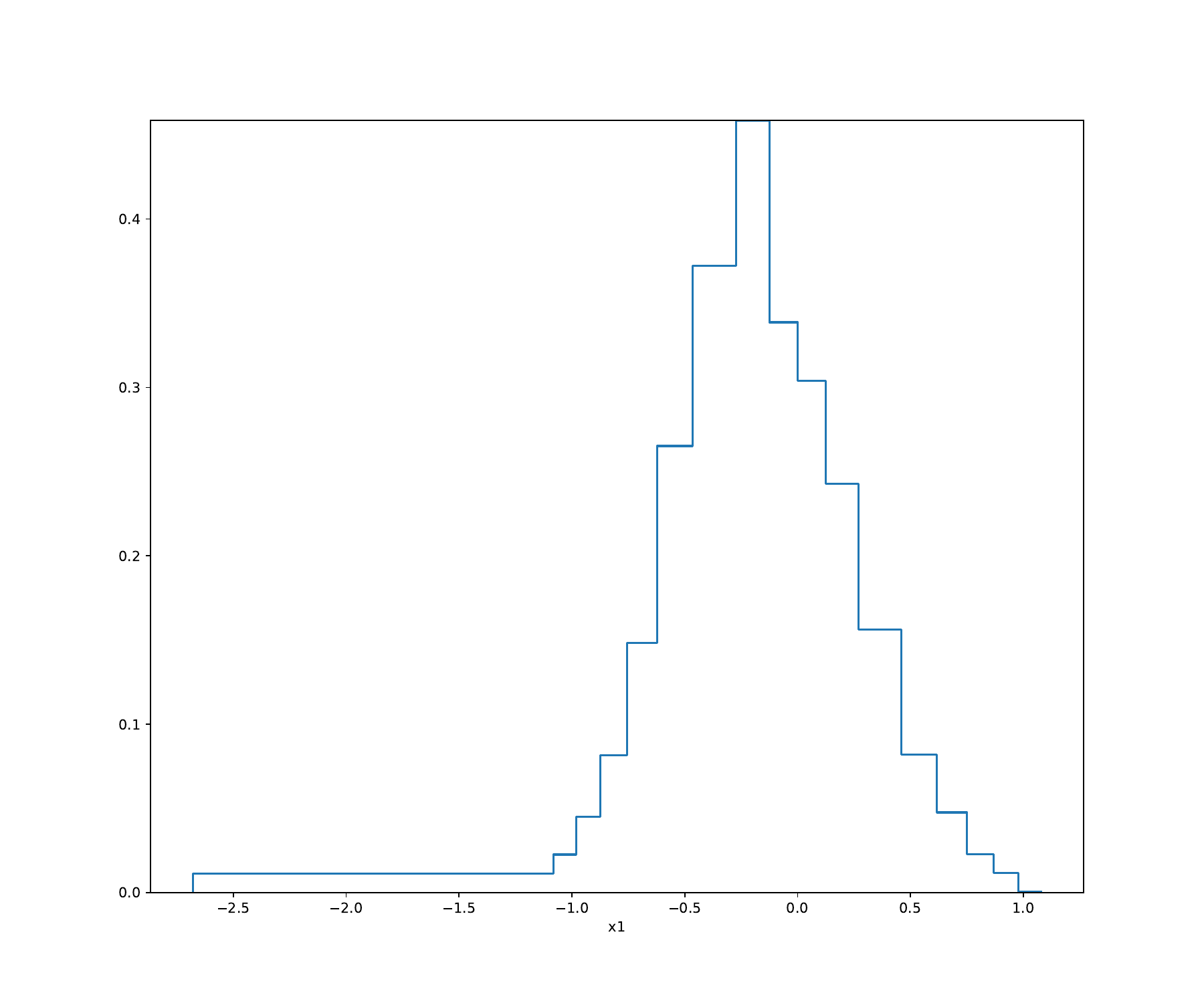}
\captionsetup{labelfont={bf}}
\subcaption{$\restr{\widehat{\mathtt{p}}^{J,\mathtt{M}}_{\mathtt{BTC}}}{\mathbf{P_{1}}}$}
\end{minipage}
\hspace{.08\linewidth}
\begin{minipage}[t]{.25\textwidth}
\centering
\includegraphics[scale=0.2]{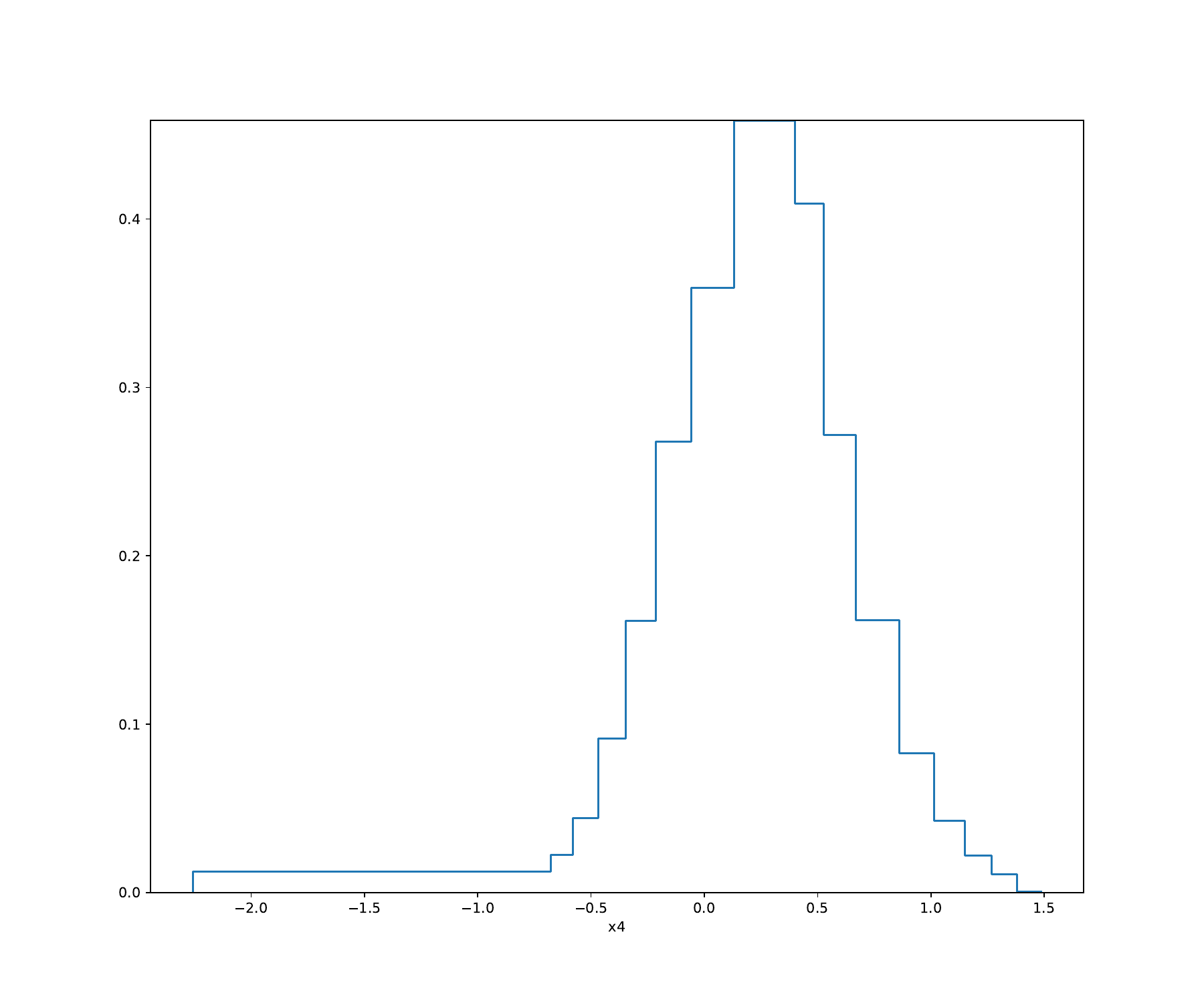}
\captionsetup{labelfont={bf}}
\subcaption{$\restr{\widehat{\mathtt{p}}^{J,\mathtt{M}}_{\mathtt{BTC}}}{\mathbf{P_{4}}}$}
\end{minipage}
\hspace{.08\linewidth}
\begin{minipage}[t]{.25\textwidth}
\centering
\includegraphics[scale=0.2]{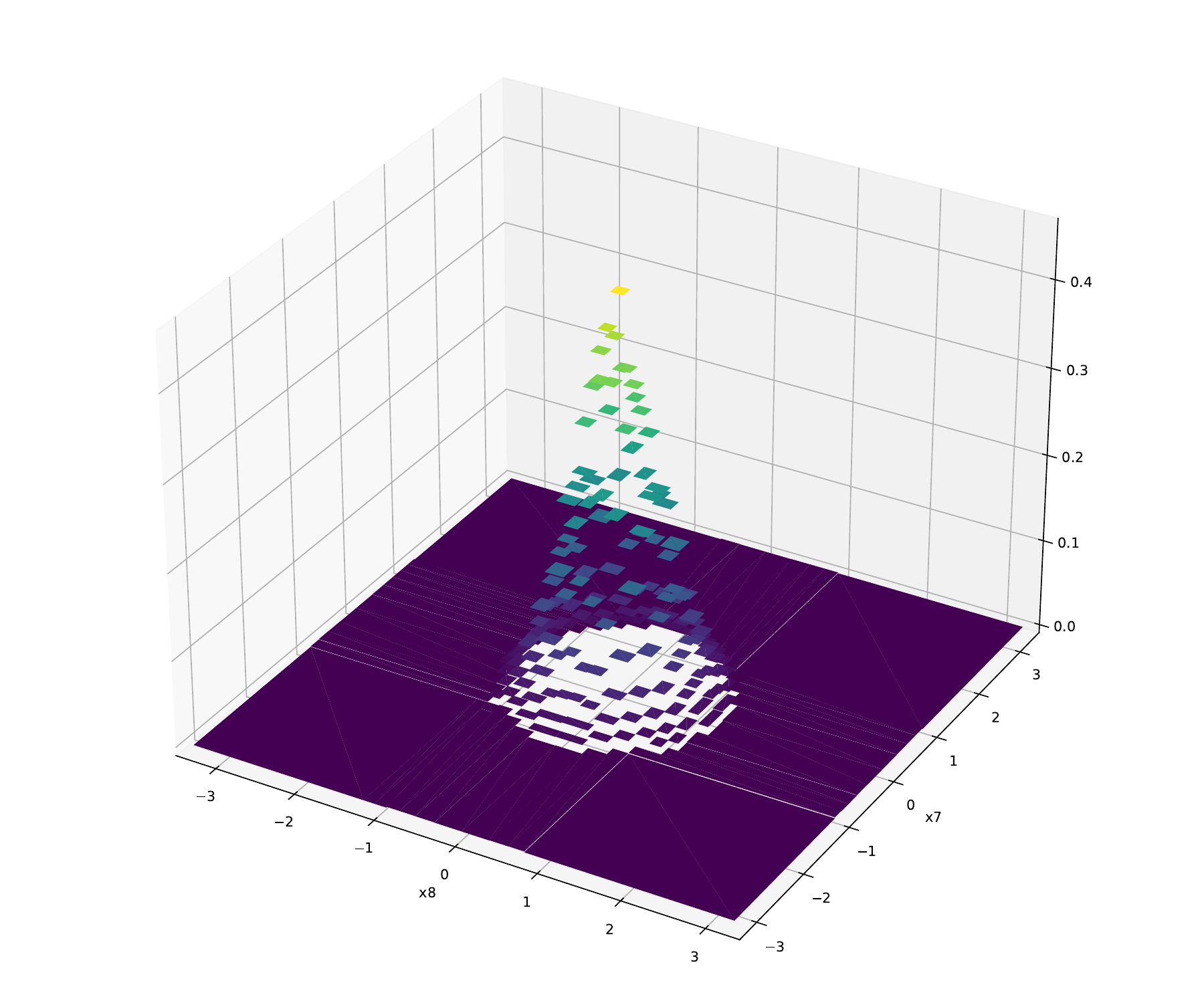}
\captionsetup{labelfont={bf}}
\subcaption{$\restr{\widehat{\mathtt{p}}^{J,\mathtt{M}}_{\mathtt{BTC}}}{\mathbf{P_{8,7}}}$}
\end{minipage}

\caption{ Example \ref{example3}: Approximated solution profile restricted to different coordinate axes ($\mathtt{M}=2^{32}$, $k_{\mathtt{M}}=2^{10}$).}
\label{ex3dynamics}

\end{figure}

\end{example}

\medskip

As mentioned before, our main theoretical result is to verify \eqref{eq:convergence} for the density estimator $\widehat{\mathtt{p}}_{\mathtt{Ges}}^{J,\mathtt{M}}$ in Section \ref{3}: for this purpose, we first add and subtract the pdf $p^{J}$ of the random variable $\mathbf{Y}^{J}$, and use the triangle inequality to obtain
\begin{equation}
\label{eq:convergencestep1}
\lVert p(T,\cdot)-\widehat{\mathtt{p}}_{\mathtt{Ges}}^{J,\mathtt{M}}\rVert_{\mathbb{L}^{1}}\leq \underbrace{\lVert p(T,\cdot)-p^{J}\rVert_{\mathbb{L}^{1}}}_{=:{\bf I}} + \underbrace{\lVert p^{J}-\widehat{\mathtt{p}}_{\mathtt{Ges}}^{J,\mathtt{M}}\rVert_{\mathbb{L}^{1}}}_{=:{\bf II}} \qquad \mathbb{P}-a.s.
\end{equation}
Term ${\bf I}$ in \eqref{eq:convergencestep1} measures the error between the pdfs of $\mathbf{X}_{T}$ and $\mathbf{Y}^{J}$, which tends to zero when the time step size $\tau$ tends to zero; for its convergence proof ({\em cf.}~Theorem \ref{modballythm}), we benefit from ideas and results in \cite{bally}. On the other hand, term ${\bf II}$ measures the error between the pdf $p^{J}$ and its empirical (statistical) counterpart $\widehat{\mathtt{p}}_{\mathtt{Ges}}^{J,\mathtt{M}}$ in \eqref{eq:estimator} --- which is random --- and $\mathbb{P}-a.s.$ converges to zero when $\mathtt{M}$ tends to infinity. The convergence of ${\bf II}$ ({\em cf.}~Theorem \ref{thmgessamanconvergence}) builds on the concept of strong $\mathbb{L}^{1}-$consistency of histogram-based density estimators as introduced in \cite{lugosi}. Furthermore we recall from \cite{lugosi} general conditions for a density estimator based on a data-dependent partitioning strategy to be strongly $\mathbb{L}^{1}-$consistent; see Theorem \ref{thmlugosi}.
The proof of Theorem \ref{thmgessamanconvergence} for $\widehat{\mathtt{p}}^{J,\mathtt{M}}\equiv \widehat{\mathtt{p}}^{J,\mathtt{M}}_{\mathtt{Ges}} $ in ${\bf II}$ then consists of verifying the general conditions in Theorem \ref{thmlugosi} for the special estimator $\widehat{\mathtt{p}}^{J,\mathtt{M}}_{\mathtt{Ges}}$ that uses statistically equivalent cells --- ensuring that \eqref{eq:convergence} holds. 

\medskip

To conclude the introduction, we summarise the advantages of the new method to obtain {\em data-dependent} density estimators, which include broad usability to solve \eqref{eq:fpe} for non-constant $\mathcal{L}^{\ast}\equiv \mathcal{L}^{\ast}(\mathbf{x})$ in higher dimensions, fast implementation, physical interpretability due to the tree-structured construction (see also \cite[p.~55]{Brei1}), mesh adaptivity, and a general convergence theory {\em without} strong assumptions on data or solutions. A disadvantage of Algorithm \ref{strategy}, however, is the growth of the sample size $\mathtt{M}$ with the dimension $d$ --- a problem that we plan to partially compensate in future works by adjusting Algorithm \ref{strategy}: Promising tools here include the use of pruning optimal trees \cite[Ch.~3]{Brei1} to reduce complexity, and cross-validation \cite[Ch.'s 7,8]{gyoerfi2} as well as random forests to stabilise density estimators; moreover, bootstrap methods (\q{bagging}; see \cite{B1}) may help to generate replicate samplings efficiently. Conceptually, the need for larger $\mathtt{M}-$samples again reflects the \q{curse of dimensionality}, which is inherent to problem \eqref{eq:fpe} when we consider general non-structured solutions of \eqref{eq:fpe}, \eqref{eq:operator}, which may only be reduced when additional assumptions are made on data and solutions.\\
The remainder of this work is organised as follows: Section \ref{2} settles approximability (with rates) of $p(T,\cdot)$ by $p^{J}$. Section \ref{3} shows consistency of the density estimator $\widehat{\mathtt{p}}^{J,\mathtt{M}}_{\mathtt{Ges}}$ from Algorithm \ref{strategy}, where Gessaman's rule \ref{gessamanstrategy} is used in step 3) of the algorithm; moreover, we introduce the binary-tree-based estimator $\widehat{\mathtt{p}}^{J,\mathtt{M}}_{\mathtt{BTC}}$ with BTC-rule \ref{moddunststrategy} as an alternative choice in step 3).
Extended computational experiments in Section \ref{4} address practical choices for the parameter $k_{\mathtt{M}}$ to set sample sizes of constructed cells for both density estimators.

\section{Assumptions and the law of the Euler scheme}
\label{2}

Subsection \ref{2.1} lists basic requirements on data $\mathbf{b}$, $\pmb{\sigma}$ and $p_{0}$ in \eqref{eq:fpe} resp.~\eqref{eq:SDE}, which guarantee the existence of the pdfs $p(T,\cdot)$ resp.~$p^{J}$ of $\mathbf{X}_{T}$ resp.~$\mathbf{Y}^{J}$ from \eqref{eq:SDE} resp.~\eqref{eq:euler}. The results in Subsection \ref{2.2} make sure that $p^{J}$ converges.

\subsection{Assumptions on data of the SDE \eqref{eq:SDE}}
\label{2.1}

Throughout this work, $\bigl(\Omega,\mathcal{F},\{\mathcal{F}_{t}\}_{t\geq 0},\mathbb{P}\bigr)$ is a given filtered probability space with natural filtration generated by the Wiener process $\mathbf{W}$ in \eqref{eq:SDE}. For $1\leq q < \infty$, and a given integrable function $f:\mathbb{R}^{d}\to \mathbb{R}$, we denote by $\lVert f \rVert_{\mathbb{L}^{q}}:=\Big(\int\limits_{\mathbb{R}^{d}}\,\lvert f(\mathbf{x})\rvert^{q}\,\mathrm{d}\mathbf{x}\Big)^{\nicefrac{1}{q}}$ the $\mathbb{L}^{q}-$norm of $f$, and by $\lVert f \rVert_{\mathbb{L}^{\infty}}:=\esssup\limits_{\mathbf{x}\in \mathbb{R}^{d}}\lvert f(\mathbf{x})\rvert$ the $\mathbb{L}^{\infty}-$norm of $f$. For two sets $\pmb{\mathcal{A}}$ and $\pmb{\mathcal{B}}$, we denote by $\mathcal{C}^{\infty}(\pmb{\mathcal{A}};\pmb{\mathcal{B}})$ the space of all smooth functions $f:\pmb{\mathcal{A}}\to \pmb{\mathcal{B}}$. If $\pmb{\mathcal{B}}=\mathbb{R}$, we write $\mathcal{C}^{\infty}(\pmb{\mathcal{A}})$. Furthermore, we denote by $\mathcal{C}^{\infty}_{0}(\mathbb{R}^{d})$ the space of all smooth functions $f:\mathbb{R}^{d}\to \mathbb{R}$ vanishing at infinity. Moreover, we denote by $\lVert \cdot \rVert_{\mathbb{R}^{d}}$ the Euclidean norm, and by $\langle \cdot, \cdot \rangle_{\mathbb{R}^{d}}$ we denote the corresponding scalar product in $\mathbb{R}^{d}$. In addition, we denote by $\lVert \cdot \rVert_{\mathbb{R}^{d\times d}}$ the spectral matrix norm. Throughout this work, we assume for simplicity:

\begin{itemize}
\item[{\bf (A1)}] $\mathbf{b}\in \mathcal{C}^{\infty}(\mathbb{R}^{d};\mathbb{R}^{d})$, with bounded derivatives of any order.
\item[{\bf (A2)}] $\pmb{\sigma}\in \mathcal{C}^{\infty}(\mathbb{R}^{d};\mathbb{R}^{d\times d})$, with bounded derivatives, and moreover, $\pmb{\sigma}(\cdot)\pmb{\sigma}^{\top}(\cdot)$ satisfies the {\em uniform ellipticity condition}, i.e., there exists a constant $\theta>0$, such that
\begin{equation*}
\langle \mathbf{y},\pmb{\sigma}(\mathbf{x})\pmb{\sigma}^{\top}(\mathbf{x})\mathbf{y}\rangle_{\mathbb{R}^{d}}\geq \theta \cdot \lVert \mathbf{y} \rVert_{\mathbb{R}^{d}}^{2}\qquad \text{for all } \mathbf{x},\mathbf{y}\in \mathbb{R}^{d}\,.
\end{equation*}
\item[{\bf (A3)}] $p_{0}:\mathbb{R}^{d}\to [0,\infty)$ is a pdf; in particular $\lVert p_{0}\rVert_{\mathbb{L}^{1}}=1$.
\end{itemize}

\medskip

It is well-known that by the assumptions above, there exists a unique classical solution $p:[0, T]\times \mathbb{R}^{d}\to [0,\infty)$ with $p(t,\cdot) \in \mathcal{C}^{\infty}_{0}(\mathbb{R}^{d})$ for all $t\in (0, T]$ of \eqref{eq:fpe}, which is positive for every fixed time $t\in (0, T]$, i.e., $p(t,\cdot)>0$ on $\mathbb{R}^{d}$; see e.g.~\cite[Thm.~8.12.1]{Krylov}.

\subsection{Convergence and properties of the density $p^{J}$ of $\mathbf{Y}^{J}$ from \eqref{eq:euler}}
\label{2.2}

Assumptions {\bf (A1)} -- {\bf (A3)} in Subsection \ref{2.1} also guarantee the existence of the pdf $p^{J}$ for the random variable $\mathbf{Y}^{J}$ from \eqref{eq:euler}; see e.g.~\cite{bally}.
In this subsection, we prove the $\mathbb{P}-a.s.$ convergence of $p^{J}$ to $p(T,\cdot)$ in $\mathbb{L}^{q}$ ($1\leq q\leq \infty$) by extending \cite[Corollary 2.7]{bally} to include random initial conditions; see Theorem Theorem \ref{modballythm} below.
First, we introduce some notation: for $\mathbf{y}\in \mathbb{R}^{d}$, we denote by $(t,\mathbf{x})\mapsto p_{\mathbf{y}}(t,\mathbf{x})$ the pdf of the associated SDE process $\mathbf{X}^{\mathbf{y}}\equiv \{\mathbf{X}_{t}^{\mathbf{y}}\,;\,t\in [0,T]\}$, which solves \eqref{eq:SDE} with $\mathbf{X}_{0}=\mathbf{y}$. Similarly, we denote by $\mathbf{x}\mapsto p_{\mathbf{y}}^{J}(\mathbf{x})$ the pdf associated with the Euler iterate $\mathbf{Y}^{J}$ from \eqref{eq:euler}, where $\mathbf{Y}^{0}=\mathbf{y}\in \mathbb{R}^{d}$. With this notation we have $p(T,\cdot)\equiv p_{\mathbf{X}^{0}}(T,\cdot)$, and $p^{J}(\cdot)\equiv p_{\mathbf{X}^{0}}^{J}(\cdot)$.

\begin{theorem}[{\cite[Corollary 2.7]{bally}}]
\label{ballythm}
Let $J\in \mathbb{N}$, and $\{t_{j}\}_{j=0}^{J}\subset [0,T]$ be a mesh with uniform step size $\tau=\frac{T}{J}$. For all $\mathbf{y},\mathbf{x}\in \mathbb{R}^{d}$ we have
\begin{equation*}
\lvert p_{\mathbf{y}}(T,\mathbf{x})-p_{\mathbf{y}}^{J}(\mathbf{x})\rvert\leq C(T)\cdot \tau \cdot \exp\left(-c \frac{\lVert \mathbf{y}-\mathbf{x} \rVert_{\mathbb{R}^{d}}^{2}}{T} \right)\,,
\end{equation*}
where $c>0$, and $C(T)>0$ is a constant that depends on $T$.
\end{theorem}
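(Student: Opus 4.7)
The plan is to follow the original strategy of Bally--Talay, which combines a PDE-based weak-error expansion for the Euler scheme with Malliavin integration by parts to lift the resulting estimate from expectations of smooth test functions to a pointwise bound on the densities themselves.

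First, I would exploit the Markov representation: under \textbf{(A1)}--\textbf{(A2)}, for any $\phi\in\mathcal{C}^{\infty}_{0}(\mathbb{R}^{d})$ the function $u(s,\mathbf{z}):=\mathbb{E}\bigl[\phi(\mathbf{X}_{T}^{s,\mathbf{z}})\bigr]$ solves the backward Kolmogorov equation $\partial_{s}u+\mathcal{L} u=0$ with $u(T,\cdot)=\phi$, and uniform ellipticity yields bounds on the space derivatives $\partial^{\alpha}u$ of any order in terms of $\|\phi\|_{\infty}$ and negative powers of $T-s$. Applying It\^o's formula to $u(t_{j},\mathbf{Y}^{j})$ between two consecutive grid points, telescoping over $j=0,\dots,J-1$, and Taylor-expanding the generator of the Euler scheme against $\mathcal{L}$ yields a one-step local error of order $\tau^{2}$ times derivatives of $u$ up to order four. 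Summing over $J=T/\tau$ steps produces the standard weak-error bound
\begin{equation*}
\bigl|\mathbb{E}[\phi(\mathbf{X}_{T}^{\mathbf{y}})]-\mathbb{E}[\phi(\mathbf{Y}^{J})]\bigr|\le C(T)\tau\,\|\phi\|_{\infty}\,.
\end{equation*}

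Second, to pass from integrated differences to the pointwise difference of densities, I would use Malliavin calculus. Uniform ellipticity guarantees that the Malliavin matrices of $\mathbf{X}_{T}^{\mathbf{y}}$ and of $\mathbf{Y}^{J}$ are invertible with moments of all orders, so both random variables admit smooth densities. Iterated integration by parts in Wiener space allows one to write, for any smooth compactly supported localiser $\chi$ around $\mathbf{x}$,
\begin{equation*}
p_{\mathbf{y}}(T,\mathbf{x})-p_{\mathbf{y}}^{J}(\mathbf{x})=\mathbb{E}\bigl[\chi(\mathbf{X}_{T}^{\mathbf{y}})H_{T}(\mathbf{X}_{T}^{\mathbf{y}},\mathbf{x})\bigr]-\mathbb{E}\bigl[\chi(\mathbf{Y}^{J})H_{J}(\mathbf{Y}^{J},\mathbf{x})\bigr]\,,
\end{equation*}
with Malliavin weights $H_{T}, H_{J}$ having bounded $L^{p}$-norms uniformly in $\tau$. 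Applying the weak-error machinery of the first step to the test function $\chi(\cdot)H_{T}(\cdot,\mathbf{x})$ (after a careful regularisation), and controlling the discrepancy between $H_{T}$ and $H_{J}$, produces the prefactor $C(T)\tau$ in the required bound.

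Third, to introduce the Gaussian factor $\exp(-c\|\mathbf{y}-\mathbf{x}\|^{2}/T)$, I would split the analysis along the regions $\|\mathbf{y}-\mathbf{x}\|\lesssim\sqrt{T}$ and $\|\mathbf{y}-\mathbf{x}\|\gtrsim\sqrt{T}$. In the near regime, standard Aronson-type upper bounds for $p_{\mathbf{y}}(T,\cdot)$ and their discrete analogue for $p_{\mathbf{y}}^{J}$ already provide the exponential factor with $\tau$ replaced by $1$, so one multiplies through by $\tau$ trivially. In the far regime, one localises the Malliavin expectations on $\{\|\mathbf{X}_{T}^{\mathbf{y}}-\mathbf{x}\|>\tfrac{1}{2}\|\mathbf{y}-\mathbf{x}\|\}$ and uses Bernstein-type sub-Gaussian deviation estimates for both the diffusion increment (via exponential martingale inequalities) and the Euler increment (a sum of Gaussian increments with drift), together with H\"older's inequality against the Malliavin weights, to generate the exponential factor.

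The principal obstacle is the third step: one must preserve both the linear decay in $\tau$ from the weak-error analysis \emph{and} the Gaussian spatial decay simultaneously. This forces a coupled argument in which the Malliavin integration-by-parts weights, the derivatives of $u$ blowing up as $(T-s)^{-|\alpha|/2}$, and the Gaussian tails on trajectories have to be balanced at every step so that the final constants $c,C(T)$ are independent of $\tau$ and the integrals over $[0,T]$ remain finite. This tight bookkeeping is the heart of \cite[Corollary 2.7]{bally}, and I would reproduce it verbatim rather than re-prove it from scratch.
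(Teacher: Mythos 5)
The paper gives no proof of this theorem: it is stated as a quotation of \cite[Corollary 2.7]{bally} and the text immediately after it simply refers the reader to that reference. Your proposal does essentially the same thing --- it sketches the Bally--Talay strategy (Talay--Tubaro-type weak-error expansion plus Malliavin integration by parts to upgrade to a pointwise density bound with Gaussian tails, which is indeed the structure of their argument) and then defers the decisive bookkeeping to the cited corollary, so it matches the paper's treatment.
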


\medskip

For the proof of Theorem \ref{ballythm} see \cite{bally}.

\medskip

\begin{theorem}
\label{modballythm}
Let $J\in \mathbb{N}$, and $\{t_{j}\}_{j=0}^{J}\subset [0,T]$ be a mesh with uniform step size $\tau=\frac{T}{J}$. Let $p(T,\cdot)$ and $p^{J}$ be the pdfs of $\mathbf{X}_{T}$ and $\mathbf{Y}^{J}$ from \eqref{eq:SDE} resp.~\eqref{eq:euler}. Then, for $1\leq q \leq \infty$,
\begin{equation*}
\lVert p(T,\cdot)-p^{J}\rVert_{\mathbb{L}^{q}}\leq C(T,d,q)\cdot \tau \,,
\end{equation*}
where $C(T,d,q)>0$ is a constant which depends on $T$, $d$, and $q$.
\end{theorem}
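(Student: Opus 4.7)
The plan is to reduce the statement to the pointwise estimate of Theorem \ref{ballythm} by conditioning on the initial datum and then absorbing the resulting Gaussian kernel via Young's convolution inequality. Concretely, since $\mathbf{X}^{0}\sim p_0$ and both the SDE \eqref{eq:SDE} and the Euler recursion \eqref{eq:euler} are driven by the same Wiener process but started at the random $\mathbf{X}^{0}$, the law of total probability (conditioning on $\mathbf{X}^{0}=\mathbf{y}$ and using the independence of $\mathbf{X}^{0}$ from $\mathbf{W}$) yields the representations
\begin{equation*}
p(T,\mathbf{x})=\int_{\mathbb{R}^{d}}p_{\mathbf{y}}(T,\mathbf{x})\,p_{0}(\mathbf{y})\,\mathrm{d}\mathbf{y},\qquad p^{J}(\mathbf{x})=\int_{\mathbb{R}^{d}}p_{\mathbf{y}}^{J}(\mathbf{x})\,p_{0}(\mathbf{y})\,\mathrm{d}\mathbf{y}.
\end{equation*}

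Subtracting these and applying Theorem \ref{ballythm} inside the integral gives the pointwise bound
\begin{equation*}
\lvert p(T,\mathbf{x})-p^{J}(\mathbf{x})\rvert\leq C(T)\,\tau\int_{\mathbb{R}^{d}}\exp\!\Bigl(-c\,\frac{\lVert\mathbf{y}-\mathbf{x}\rVert_{\mathbb{R}^{d}}^{2}}{T}\Bigr)p_{0}(\mathbf{y})\,\mathrm{d}\mathbf{y}=C(T)\,\tau\,(K_{T}\ast p_{0})(\mathbf{x}),
\end{equation*}
where $K_{T}(\mathbf{z}):=\exp(-c\lVert\mathbf{z}\rVert_{\mathbb{R}^{d}}^{2}/T)$.

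Next, take the $\mathbb{L}^{q}$ norm of both sides and apply Young's convolution inequality with the exponent identity $1+\tfrac{1}{q}=\tfrac{1}{q}+1$, using that $\lVert p_{0}\rVert_{\mathbb{L}^{1}}=1$ by \textbf{(A3)}:
\begin{equation*}
\lVert p(T,\cdot)-p^{J}\rVert_{\mathbb{L}^{q}}\leq C(T)\,\tau\,\lVert K_{T}\ast p_{0}\rVert_{\mathbb{L}^{q}}\leq C(T)\,\tau\,\lVert K_{T}\rVert_{\mathbb{L}^{q}}.
\end{equation*}
An explicit Gaussian integration shows $\lVert K_{T}\rVert_{\mathbb{L}^{q}}=\bigl(T\pi/(cq)\bigr)^{d/(2q)}$ for $1\leq q<\infty$, and $\lVert K_{T}\rVert_{\mathbb{L}^{\infty}}=1$; in either case this quantity depends only on $T$, $d$, and $q$, which yields the claimed constant $C(T,d,q)$.

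The only point requiring some care is the justification of the integral representations for $p(T,\cdot)$ and $p^{J}$, since we need to invoke the independence of $\mathbf{X}^{0}$ from $\mathbf{W}$ and to apply Fubini-style arguments to the joint density; this is standard given \textbf{(A1)}--\textbf{(A3)} (for the Euler density one uses the Gaussian conditional density of $\mathbf{Y}^{J}$ given $\mathbf{Y}^{0}$, for the SDE one uses the smoothness of $p_{\mathbf{y}}(T,\cdot)$ guaranteed by the H\"ormander/ellipticity condition \textbf{(A2)}). Beyond that, the argument is a one-line reduction to Theorem \ref{ballythm} followed by Young's inequality, so no further obstacle is anticipated.
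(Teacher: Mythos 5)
Your proposal is correct and follows the same core strategy as the paper: both condition on the initial datum to write $p(T,\cdot)$ and $p^{J}$ as mixtures $\int p_{\mathbf{y}}(T,\cdot)\,p_{0}(\mathbf{y})\,\mathrm{d}\mathbf{y}$ and $\int p_{\mathbf{y}}^{J}(\cdot)\,p_{0}(\mathbf{y})\,\mathrm{d}\mathbf{y}$, apply the pointwise Gaussian bound of Theorem \ref{ballythm} under the integral, and then integrate the Gaussian kernel. The only difference is the final step: the paper proves the two endpoint cases $q=1$ (via Fubini and the explicit integral $\int\exp(-c\lVert\mathbf{x}-\mathbf{y}\rVert^{2}/T)\,\mathrm{d}\mathbf{x}=(T\pi/c)^{d/2}$) and $q=\infty$ (by bounding the kernel by $1$ and using $\lVert p_{0}\rVert_{\mathbb{L}^{1}}=1$), and then invokes the Riesz--Thorin interpolation theorem for intermediate $q$, whereas you treat all $1\leq q\leq\infty$ uniformly with Young's convolution inequality $\lVert K_{T}\ast p_{0}\rVert_{\mathbb{L}^{q}}\leq\lVert K_{T}\rVert_{\mathbb{L}^{q}}\lVert p_{0}\rVert_{\mathbb{L}^{1}}$ and an explicit computation of $\lVert K_{T}\rVert_{\mathbb{L}^{q}}$. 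Your route is marginally more self-contained (no interpolation theorem needed) and even produces an explicit constant $(T\pi/(cq))^{d/(2q)}$; the paper's route has the small advantage of only requiring the kernel to be integrable and bounded, without computing its $\mathbb{L}^{q}$ norms. Both arguments are complete and correct; your exponent bookkeeping in Young's inequality and the Gaussian norm computation check out.
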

Consequently, for $1\leq q \leq \infty$,
\begin{equation*}
\lVert p(T,\cdot)-p^{J}\rVert_{\mathbb{L}^{q}} \to 0 \qquad \mathbb{P}-a.s. \qquad (\text{as }\tau \to 0)\,.
\end{equation*}

\begin{proof} 
By the {\em Riesz-Thorin interpolation theorem} for $\mathbb{L}^{q}$ spaces, it suffices to prove the statement for $q=1$ and $q=\infty$.\\
{\bf Step 1:} ($q=1$) By using the {\em Chapman-Kolmogorov equation} for the transition density, a calculation yields
\begin{align*}
\lVert p(T,\cdot)-p^{J}\rVert_{\mathbb{L}^{1}}&=\int\limits_{\mathbb{R}^{d}}\,\lvert p_{\mathbf{X}^{0}}(T,\mathbf{x})-p_{\mathbf{X}^{0}}^{J}(\mathbf{x})\rvert\,\mathrm{d}\mathbf{x}=\int\limits_{\mathbb{R}^{d}}\, \bigg \lvert \int\limits_{\mathbb{R}^{d}}\,p_{\mathbf{y}}(T,\mathbf{x})\cdot p_{0}(\mathbf{y})\,\mathrm{d}\mathbf{y} - \int\limits_{\mathbb{R}^{d}}\,p_{\mathbf{y}}^{J}(\mathbf{x})\cdot p_{0}(\mathbf{y})\,\mathrm{d}\mathbf{y}\,\bigg \rvert \, \mathrm{d}\mathbf{x} \notag \\
&= \int\limits_{\mathbb{R}^{d}}\, \bigg \lvert \int \limits_{\mathbb{R}^{d}} \, \left(p_{\mathbf{y}}(T,\mathbf{x})-p_{\mathbf{y}}^{J}(\mathbf{x})  \right)\cdot p_{0}(\mathbf{y})\,\mathrm{d}\mathbf{y}\,\bigg \rvert \, \mathrm{d}\mathbf{x} \leq \int\limits_{\mathbb{R}^{d}}\,  \int \limits_{\mathbb{R}^{d}} \, \big \lvert p_{\mathbf{y}}(T,\mathbf{x})-p_{\mathbf{y}}^{J}(\mathbf{x})  \big \rvert \cdot p_{0}(\mathbf{y})\,\mathrm{d}\mathbf{y} \, \mathrm{d}\mathbf{x}\,.
\end{align*}
Application of Theorem \ref{ballythm} then leads to
\begin{align*}
\lVert p(T,\cdot)-p^{J}\rVert_{\mathbb{L}^{1}}&\leq C(T)\cdot \tau \cdot \int\limits_{\mathbb{R}^{d}}\,\int\limits_{\mathbb{R}^{d}}\, \exp\left(-c \frac{\lVert \mathbf{y}-\mathbf{x} \rVert_{\mathbb{R}^{d}}^{2}}{T} \right)\cdot p_{0}(\mathbf{y})\,\mathrm{d}\mathbf{y}\,\mathrm{d}\mathbf{x} \notag \\
& = C(T)\cdot \tau \cdot \int\limits_{\mathbb{R}^{d}}\,p_{0}(\mathbf{y})\,\int\limits_{\mathbb{R}^{d}}\, \exp\left(-c \frac{\lVert \mathbf{y}-\mathbf{x} \rVert_{\mathbb{R}^{d}}^{2}}{T} \right)\,\mathrm{d}\mathbf{x}\,\mathrm{d}\mathbf{y}\,.
\end{align*}
Since
\begin{equation*}
\int\limits_{\mathbb{R}^{d}}\,\exp\left(-c\frac{\lVert \mathbf{x}-\mathbf{y}\rVert_{\mathbb{R}^{d}}^{2}}{T}\right) \,\mathrm{d}\mathbf{x}=\left( \frac{T \pi}{c} \right)^{\nicefrac{d}{2}}\,,
\end{equation*}
we further conclude
\begin{align*}
\lVert p(T,\cdot)-p^{J}\rVert_{\mathbb{L}^{1}}&\leq  C(T)\cdot \tau \cdot \int\limits_{\mathbb{R}^{d}}\,p_{0}(\mathbf{y})\cdot \left(\frac{T\pi}{c} \right)^{\nicefrac{d}{2}} \,\mathrm{d}\mathbf{y}  =C(T)\cdot \left(\frac{T\pi}{c} \right)^{\nicefrac{d}{2}}\cdot \tau\,,
\end{align*}
from which the convergence statement in the theorem follows for $q=1$.\\
{\bf Step 2:} ($q=\infty$) Similar to Step 1, we obtain
\begin{align*}
\lVert p(T,\cdot)-p^{J}\rVert_{\mathbb{L}^{\infty}}&=\sup\limits_{\mathbf{x} \in \mathbb{R}^{d}}\,\lvert p_{\mathbf{X}^{0}}(T,\mathbf{x})-p_{\mathbf{X}^{0}}^{J}(\mathbf{x})\rvert \leq \sup\limits_{\mathbf{x} \in \mathbb{R}^{d}}\,\int \limits_{\mathbb{R}^{d}} \, \big \lvert p_{\mathbf{y}}(T,\mathbf{x})-p_{\mathbf{y}}^{J}(\mathbf{x})  \big \rvert \cdot p_{0}(\mathbf{y})\,\mathrm{d}\mathbf{y}\\
&\leq C(T) \cdot \tau \cdot \sup\limits_{\mathbf{x} \in \mathbb{R}^{d}} \, \underbrace{\int \limits_{\mathbb{R}^{d}} \,\exp\left(-c \frac{\lVert \mathbf{y}-\mathbf{x} \rVert_{\mathbb{R}^{d}}^{2}}{T} \right)\cdot p_{0}(\mathbf{y})\,\mathrm{d}\mathbf{y}}_{\leq 1} \leq C(T) \cdot \tau \,,
\end{align*}
from which the convergence assertion follows for $q=\infty$.
\end{proof}

\

\section{Data-dependent density estimators}
\label{3}


In our opinion, {\em data-dependent} partitioning strategies in combination with histogram-based density estimators, which are structured as decision trees, are an intriguing prospect to address the approximation of \eqref{eq:fpe}; see Figure \ref{tree} and Subsection \ref{3.1}. In Subsection \ref{3.2}, we verify the $\mathbb{P}-a.s.$ $\mathbb{L}^{1}-$convergence of the estimator $\widehat{\mathtt{p}}^{J,\mathtt{M}}\equiv \widehat{\mathtt{p}}^{J,\mathtt{M}}_{\mathtt{Ges}}$ in Algorithm \ref{strategy}; see Theorem \ref{thmgessamanconvergence}.

\subsection{On data-dependent partitioning strategies}
\label{3.1}

We present two splitting rules for step 3) in Algorithm \ref{strategy} to obtain two {\em data-dependent} estimators $\widehat{\mathtt{p}}^{J,\mathtt{M}}_{\mathtt{Ges}}$ and $\widehat{\mathtt{p}}^{J,\mathtt{M}}_{\mathtt{BTC}}$. For their construction we grow ($\mathtt{N}-$ary resp.~binary) trees of different heights, where interior nodes represent an ongoing refinement of the partitioning of $\mathbb{R}^{d}$, which is terminated when the maximum height is reached to explain the structure of the complex data $\mathbf{D}_{\mathtt{M}}^{J}$; see Figure \ref{partitions}.


\begin{figure}[h!]
\centering
\begin{minipage}[t]{.25\textwidth}
\centering
\includegraphics[scale=0.38]{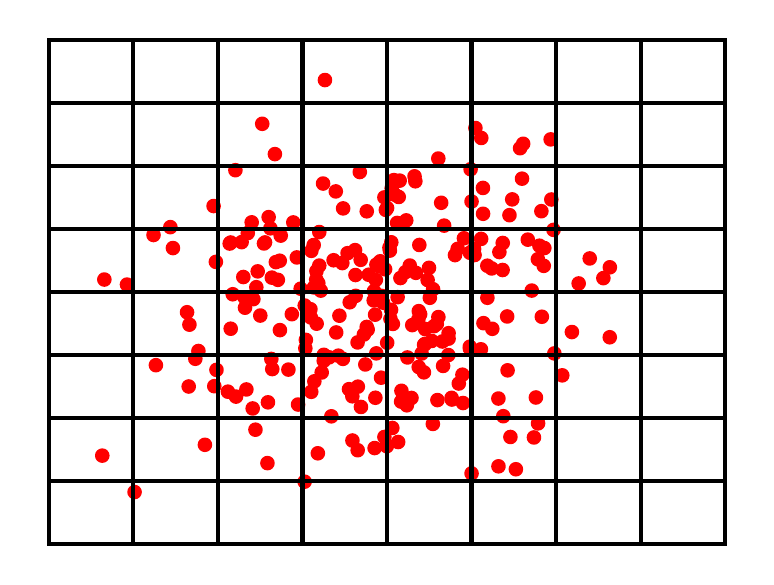}
\captionsetup{labelfont={bf}}
\subcaption{Uniform partition $\pmb{\mathcal{P}}_{\mathtt{uni}}^{J,\mathtt{M}}$}
\end{minipage}
\hspace{.08\linewidth}
\begin{minipage}[t]{.25\textwidth}
\centering
\includegraphics[scale=0.38]{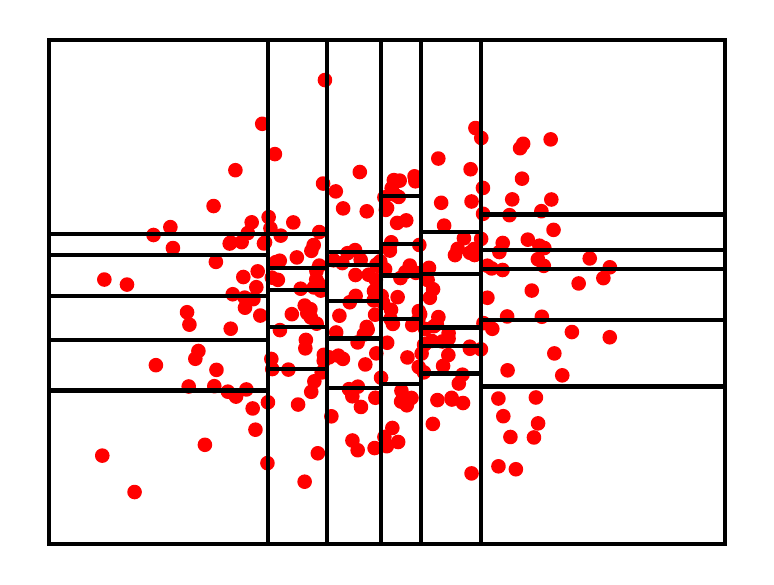}
\captionsetup{labelfont={bf}}
\subcaption{{\em Data-dependent} partition $\pmb{\mathcal{P}}_{\mathtt{Ges}}^{J,\mathtt{M}}$}
\end{minipage}
\hspace{.08\linewidth}
\begin{minipage}[t]{.25\textwidth}
\centering
\includegraphics[scale=0.38]{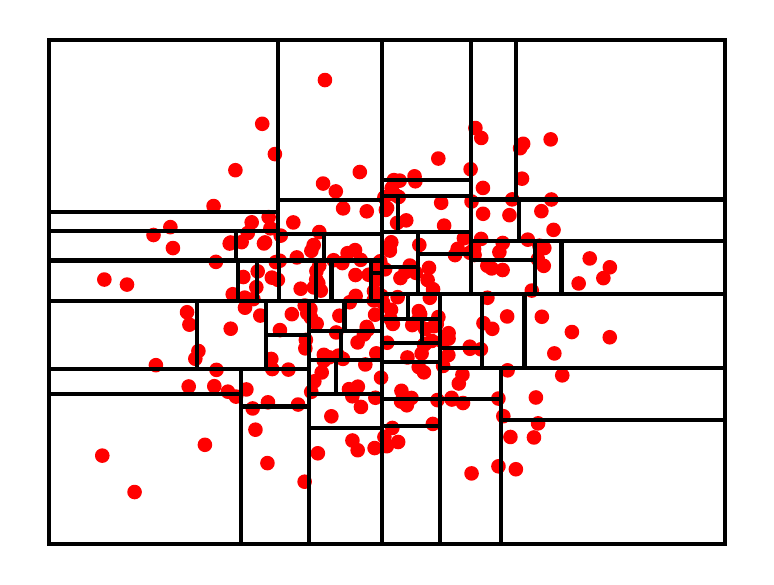}
\captionsetup{labelfont={bf}}
\subcaption{{\em Data-dependent} partition $\pmb{\mathcal{P}}_{\mathtt{BTC}}^{J,\mathtt{M}}$}
\end{minipage}

\caption{Partitioning of $\mathbb{R}^{d}$, where the sample is drawn from a $2-$dimensional standard normal distribution.}
\label{partitions}

\end{figure}

\subsubsection{Gessaman's splitting rule}
\label{3.1.1}

We fix $\mathtt{M}\in \mathbb{N}$, and let $\mathbf{D}_{\mathtt{M}}^{J}:=\{\mathbf{Y}^{J,m}\}_{m=1}^{\mathtt{M}}$ be an {\em i.i.d.}~$\mathbb{R}^{d}-$valued sample set from \eqref{eq:euler} with corresponding density $p^{J}$ on $\mathbb{R}^{d}$. Let $k_{\mathtt{M}}\in \mathbb{N}$ denote the number of realizations out of $\mathbf{D}_{\mathtt{M}}^{J}$ of each fully refined cell $\mathbf{R}_{r}\in \pmb{\mathcal{P}}^{J,\mathtt{M}}_{\mathtt{Ges}}$.

\medskip

For Gessaman's rule, $\mathbb{R}^{d}$ is partitioned into $\mathtt{N}$ sets using hyperplanes perpendicular to the $x_{1}-$axis, such that each set contains an equal number of samples. This produces $\mathtt{N}$ sets resembling long strips. In the same way, one partitions each of these $\mathtt{N}$ long strips along the $x_{2}-$axis into further $\mathtt{N}$ cells and continues in such a way in all coordinates until $\mathtt{N}^{d}$ cells are reached in total, each containing the same number of sample iterates. More precisely, Gessaman's rule describes the following algorithm.

\begin{gessaman rule}[see \cite{gessaman,lugosi}]
\label{gessamanstrategy}
Select $\mathtt{M}$ and $k_{\mathtt{M}}$ such that $\mathtt{N}=\Big\lceil \left(\frac{\mathtt{M}}{k_{\mathtt{M}}}\right)^{\nicefrac{1}{d}} \Big \rceil\in \mathbb{N}$. Let $\mathcal{R}^{\mathtt{M}}_{0,0}:=\mathbb{R}^{d}$.\\
{\bf For} $i=0,...,d$ do:\\
~\text{~~~}~{\bf For} $j=0,...,\mathtt{N}^{i}-1$ do:
\begin{itemize}
\item[~] Decompose $\mathcal{R}^{\mathtt{M}}_{i,j}$ into $\mathtt{N}$ hyper-rectangles $\mathcal{R}^{\mathtt{M}}_{i+1,dj+\ell}=\mathcal{R}^{\mathtt{M}}_{i,j}\cap \left(\mathbb{R}^{i} \times [x_{\ell},x_{\ell +1})\times \mathbb{R}^{d-1-i}   \right)$ with $x_{\ell}\in \mathbb{R}$ such that $-\infty=x_{0}<x_{1}<...<x_{\mathtt{N}-1}<x_{\mathtt{N}}=\infty$ and each $\mathcal{R}^{\mathtt{M}}_{i+1,dj+\ell}$ contains $\frac{\mathtt{M}}{\mathtt{N}^{i+1}}$ samples. Set $\pmb{\mathcal{P}}_{\mathtt{Ges}}^{J,\mathtt{M}}= \{\mathcal{R}^{\mathtt{M}}_{d,j}\,:\, 0\leq j \leq \mathtt{R}_{\mathtt{Ges}}-1\}$, where $\mathtt{R}_{\mathtt{Ges}}=\mathtt{N}^{d}$.
\end{itemize}

\end{gessaman rule}

The following example clarifies the construction.

\begin{example}
\label{gessamanclarify}
Consider Figure \ref{partitions} (b) in combination with Figure \ref{tree} (a). Starting with $\mathcal{R}^{\mathtt{M}}_{0,0}:=\mathbb{R}^{d}$, we observe $\mathtt{N}=6$ long strips in $x_{1}-$direction according to Gessaman's rule \ref{gessamanstrategy}. For $j=0,..,\mathtt{N}-1$, we denote by $\mathcal{R}^{\mathtt{M}}_{1,j}$ the $j-$th long strip. Then, each of these long strips is further decomposed into $\mathtt{N}=6$ cells. The total amount of cells is $\mathtt{R}_{\mathtt{Ges}}=\mathtt{N}^{2}=36$. For $r=0,...,\mathtt{N}^{2}-1$, we denote by $\mathcal{R}^{\mathtt{M}}_{2,r}\equiv \mathbf{R}_{r}$ the $r-$th cell  in the generated partition $\pmb{\mathcal{P}}_{\mathtt{Ges}}^{J,\mathtt{M}}=\{\mathbf{R}_{r};\,r=0,...,\mathtt{R}_{\mathtt{Ges}}-1\}$ with $\mathtt{R}_{\mathtt{Ges}}=\mathtt{N}^{2}$.
\end{example}

\medskip


\subsubsection{A Binary Tree Cuboid (BTC)-based splitting rule}
\label{3.1.2}

In this subsection, we present an alternative rule leading to trees with a considerably larger depth of $\mathcal{O}(\log(\mathtt{M}))$ and better resolution of the density $p^{J}$. It is a modification of the BTC splitting rule in \cite{dunst}: In view of condition (c) in Theorem \ref{thmlugosi}, longest sides of cells are split recursively to ensure the shrinking of cell diameters. To this end, let $\mathtt{M}=2^{N}$ for some $N\in \mathbb{N}$, and fix a number $k_{\mathtt{M}}=2^{n}$, where $n<N$. The generated partition is denoted by $\pmb{\mathcal{P}}_{\mathtt{BTC}}^{J,\mathtt{M}}$.

\begin{BTC-rule}
\label{moddunststrategy}
Set $\kappa=N-n$, and $\mathcal{S}_{0,0}=\mathbf{D}_{\mathtt{M}}^{J}$, and $\mathcal{R}_{0,0}^{\mathtt{M}}=\mathbb{R}^d$.\\ 
{\bf For} $p=0,...,\kappa-1$ do:\\
~\text{~~~}~{\bf For} $q=0,...,2^{p}-1$ do:
\begin{itemize}
\item[(I)] Set $\pmb{\mathcal{S}}:=\mathcal{S}_{p,q}$ and $\pmb{\mathcal{R}}:=\mathcal{R}_{p,q}^\mathtt{M}$.
\item[(II)] Find the index of the coordinate axes $\ell \in \{1,...,d\}$ parallel to a largest edge of the cell $\pmb{\mathcal{R}}$; denote this component by $\ell_{p,q}\in \{1,...,d\}$. If two or more edges have equal largest length, choose the edge with the lowest index $\ell$. Edge lengths in a direction in which the cell is unbounded are considered infinite.
\item[(III)] Compute the median $\mathtt{med}_{p,q}\in \mathbb{R}$ along the $\ell_{p,q}-$axis of all sample iterates in $\pmb{\mathcal{S}}$.
\item[(IV)] 
Divide $\pmb{\mathcal{R}}=\mathcal{R}_{p+1,2q}^\mathtt{M} \cup \mathcal{R}_{p+1,2q+1}^\mathtt{M}$ into two disjoint cells by a hyperplane perpendicular to the $\ell_{p,q}-$axis at $\mathtt{med}_{p,q}$, and set $\mathcal{S}_{p+1,2q}=\pmb{\mathcal{S}}\cap \mathcal{R}_{p+1,2q}^\mathtt{M}$, $\mathcal{S}_{p+1,2q+1}=\pmb{\mathcal{S}}\cap \mathcal{R}_{p+1,2q+1}^\mathtt{M}$.
\end{itemize}
\end{BTC-rule}

Note that \q{$\mathcal{S}_{p,q}$} contains the samples in \q{$\pmb{\mathcal{S}}$} which are contained in \q{$\mathcal{R}_{p,q}^{\mathtt{M}}$}. The partition $\pmb{\mathcal{P}}_{\mathtt{BTC}}^{J,\mathtt{M}}$ generated via this procedure has a total number of $\mathtt{R}_{\mathtt{BTC}}:=2^{\kappa}=2^{N-n}=\frac{\mathtt{M}}{k_{\mathtt{M}}}$ cells.

\subsection{Strong consistency of the density estimator $\widehat{\mathtt{p}}_{\mathtt{Ges}}^{J,\mathtt{M}}$}
\label{3.2}

The goal of this subsection is to prove the strong $\mathbb{L}^{1}-$consistency of the estimator $\widehat{\mathtt{p}}^{J,\mathtt{M}}_{\mathtt{Ges}}$ from Algorithm \ref{strategy} with Gessaman's rule \ref{gessamanstrategy}; see Theorem \ref{thmgessamanconvergence} below. The combination with Theorem \ref{modballythm} then settles convergence of the estimator towards $p(T,\cdot)$ through \eqref{eq:convergence}.

The density estimator $\widehat{\mathtt{p}}^{J,\mathtt{M}}_{\mathtt{Ges}}$ was first proposed in \cite{gessaman}, where its strong $\mathbb{L}^{1}-$consistency was motivated. This section aims to provide a proof of this property. It rests on the verification of three criteria given in \cite[Theorem 1]{lugosi} to guarantee strong $\mathbb{L}^{1}-$consistency for a general density estimator; see also Theorem \ref{thmlugosi} below.

Let $\mathtt{M}\in \mathbb{N}$. We write $\pmb{\mathcal{P}}^{J,\mathtt{M}}(\mathbf{x}_{1},...,\mathbf{x}_{\mathtt{M}})$ for a partition of $\mathbb{R}^{d}$ which is realised based upon given points $\mathbf{x}_{1},...,\mathbf{x}_{\mathtt{M}}\in \mathbb{R}^{d}$. Note that with this notation we have $\pmb{\mathcal{P}}^{J,\mathtt{M}}\equiv \pmb{\mathcal{P}}^{J,\mathtt{M}}(\mathbf{Y}^{J,1},...,\mathbf{Y}^{J,\mathtt{M}})$ based on the sample iterates $\mathbf{D}^{J}_{\mathtt{M}}$ from Algorithm \ref{strategy}. We denote by $\pmb{\mathcal{A}}_{\mathtt{M}}:=\{\pmb{\mathcal{P}}^{J,\mathtt{M}}(\mathbf{x}_{1},...,\mathbf{x}_{\mathtt{M}})\,:\,\mathbf{x}_{1},...,\mathbf{x}_{\mathtt{M}}\in \mathbb{R}^{d}\}$ the collection of nonrandom partitions following the same underlying partitioning strategy. Moreover, we denote by $\mathtt{m}(\pmb{\mathcal{A}}_{\mathtt{M}}):=\sup\limits_{\pmb{\mathcal{P}}\in \pmb{\mathcal{A}}_{\mathtt{M}}} \#\{\text{cells in }\pmb{\mathcal{P}}\}$ the maximal cell count of $\pmb{\mathcal{A}}_{\mathtt{M}}$.\\
Fix $\mathtt{M}$ points $\tilde{\mathbf{x}}_{1},...,\tilde{\mathbf{x}}_{\mathtt{M}}\in\mathbb{R}^{d}$ and let $\pmb{\mathcal{B}}:=\{\tilde{\mathbf{x}}_{1},...,\tilde{\mathbf{x}}_{\mathtt{M}}\}$. We denote by $\Delta \bigl(\pmb{\mathcal{A}}_{\mathtt{M}},(\tilde{\mathbf{x}}_{1},...,\tilde{\mathbf{x}}_{\mathtt{M}})\bigr)$ the number of distinct partitions 
\begin{equation*}
\{\mathbf{R}_{0}\cap \pmb{\mathcal{B}},...,\mathbf{R}_{\mathtt{R}-1}\cap \pmb{\mathcal{B}}\}
\end{equation*}
of the finite set $\pmb{\mathcal{B}}$ that are induced by partitions $\pmb{\mathcal{P}}:=\{\mathbf{R}_{0},...,\mathbf{R}_{\mathtt{R}-1}\}\in \pmb{\mathcal{A}}_{\mathtt{M}}$. Then, we define the growth function of $\pmb{\mathcal{A}}_{\mathtt{M}}$ as
\begin{equation*}
\Delta_{\mathtt{M}}^{\ast}(\pmb{\mathcal{A}}_{\mathtt{M}}):=\max\limits_{(\mathbf{x}_{1},...,\mathbf{x}_{\mathtt{M}})\in \mathbb{R}^{\mathtt{M}\cdot d}} \Delta \bigl(\pmb{\mathcal{A}}_{\mathtt{M}},(\mathbf{x}_{1},...,\mathbf{x}_{\mathtt{M}})\bigr)\,,
\end{equation*}
which is the largest number of distinct partitions of any $\mathtt{M}$ point subset of $\mathbb{R}^{d}$ that can be induced by the partitions in $\pmb{\mathcal{A}}_{\mathtt{M}}$.\\
Finally, for every $\mathbf{x}\in \mathbb{R}^{d}$, we denote by $\pmb{\mathcal{P}}^{J,\mathtt{M}}[\mathbf{x}]$ the unique cell in the partition $\pmb{\mathcal{P}}^{J,\mathtt{M}}$ that contains the point $\mathbf{x}$.

\medskip

\begin{theorem}[{\cite[Theorem 1]{lugosi}}]
\label{thmlugosi}
Let $\mathbf{Y}^{J,1},\mathbf{Y}^{J,2},...$ be {\em i.i.d.}~random vectors in $\mathbb{R}^{d}$ whose common distribution $\mathbb{P}_{\mathbf{Y}^{J}}$ has density $p^{J}$. Let $\mathtt{M}\in \mathbb{N}$. Let $\pmb{\mathcal{A}}_{\mathtt{M}}:=\{\pmb{\mathcal{P}}^{J,\mathtt{M}}(\mathbf{x}_{1},...,\mathbf{x}_{\mathtt{M}})\,:\,\mathbf{x}_{1},...,\mathbf{x}_{\mathtt{M}}\in \mathbb{R}^{d}\}$ be the collection of partitions following the same underlying partitioning strategy. If, as $\mathtt{M}$ tends to infinity
\begin{itemize}
\item[(a)] $\mathtt{M}^{-1}\cdot \mathtt{m}(\pmb{\mathcal{A}}_{\mathtt{M}})\to 0$
\item[(b)] $\mathtt{M}^{-1}\cdot \ln \bigl( \Delta_{\mathtt{M}}^{\ast}(\pmb{\mathcal{A}}_{\mathtt{M}})\bigr) \to 0$
\item[(c)] $\mathbb{P}_{\mathbf{Y}^{J}}\big[ \{\mathbf{x}\in \mathbb{R}^{d}\,:\, \mathtt{diam}\bigl(\pmb{\mathcal{P}}^{J,\mathtt{M}}[\mathbf{x}]\bigr)>\gamma \} \big] \to 0 \quad \mathbb{P}-a.s.$ for every $\gamma>0$\,,
\end{itemize}
then the estimator $\widehat{\mathtt{p}}^{J,\mathtt{M}}$ given in \eqref{eq:estimator} is strongly $\mathbb{L}^{1}-$consistent, i.e.,
\begin{equation*}
\lVert p^{J} -\widehat{\mathtt{p}}^{J,\mathtt{M}} \rVert_{\mathbb{L}^{1}} \to 0 \qquad \mathbb{P}-a.s. \qquad \text{(as } \mathtt{M}\to \infty)\,.
\end{equation*}
\end{theorem}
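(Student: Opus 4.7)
The plan is to split the $\mathbb{L}^{1}$ error via the triangle inequality into a \emph{bias} term and a \emph{fluctuation} term, and to control each with a separate subset of the hypotheses. Introduce the (random, partition-dependent) averaged histogram
\[
\bar{\mathtt{p}}^{J,\mathtt{M}}(\mathbf{x}) := \sum_{r=0}^{\mathtt{R}-1} \frac{\mathbb{P}_{\mathbf{Y}^{J}}(\mathbf{R}_{r})}{\mathtt{vol}(\mathbf{R}_{r})} \cdot \mathbbm{1}_{\{\mathbf{x}\in \mathbf{R}_{r}\}}\,,
\]
obtained from $\widehat{\mathtt{p}}^{J,\mathtt{M}}$ by replacing empirical cell masses with true cell masses; the triangle inequality then yields
\[
\lVert p^{J} - \widehat{\mathtt{p}}^{J,\mathtt{M}} \rVert_{\mathbb{L}^{1}} \leq \underbrace{\lVert p^{J} - \bar{\mathtt{p}}^{J,\mathtt{M}} \rVert_{\mathbb{L}^{1}}}_{=:\,\mathbf{I}_{\mathrm{bias}}} + \underbrace{\lVert \bar{\mathtt{p}}^{J,\mathtt{M}} - \widehat{\mathtt{p}}^{J,\mathtt{M}} \rVert_{\mathbb{L}^{1}}}_{=:\,\mathbf{I}_{\mathrm{fluct}}}\,.
\]

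The bias term I would handle using only hypothesis (c). Pathwise in $\omega$, the averaging operator $f\mapsto \sum_{r}\bigl(\tfrac{1}{\mathtt{vol}(\mathbf{R}_{r})}\int_{\mathbf{R}_{r}}\!f\bigr)\mathbbm{1}_{\mathbf{R}_{r}}$ is an $\mathbb{L}^{1}$-contraction, so by density of $\mathcal{C}_{c}(\mathbb{R}^{d})$ it suffices to approximate $p^{J}$ in $\mathbb{L}^{1}$ by a uniformly continuous $\varphi$ of compact support and bound the bias of $\varphi$. On cells of diameter $\leq \gamma$, the modulus of continuity of $\varphi$ gives a uniform pointwise bound of order $\omega_{\varphi}(\gamma)$; cells of diameter exceeding $\gamma$ contribute at most $2\lVert \varphi\rVert_{\mathbb{L}^{\infty}}\,\mathbb{P}_{\mathbf{Y}^{J}}\bigl[\mathtt{diam}(\pmb{\mathcal{P}}^{J,\mathtt{M}}[\cdot])>\gamma\bigr]$, which vanishes $\mathbb{P}$-a.s.\ by (c). A standard $\varepsilon/3$ argument then drives $\mathbf{I}_{\mathrm{bias}}\to 0$ $\mathbb{P}$-a.s.

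The fluctuation term is the heart of the proof and is where hypotheses (a) and (b) enter. Scheff\'e's/Glick's identity delivers
\[
\mathbf{I}_{\mathrm{fluct}} = \sum_{r} \bigl| \mathbb{P}_{\mathbf{Y}^{J}}(\mathbf{R}_{r}) - \mathbb{P}_{\mathtt{M}}(\mathbf{R}_{r}) \bigr| = 2 \sup_{B\in \sigma(\pmb{\mathcal{P}}^{J,\mathtt{M}})}\bigl| \mathbb{P}_{\mathtt{M}}(B) - \mathbb{P}_{\mathbf{Y}^{J}}(B)\bigr|\,,
\]
where $\mathbb{P}_{\mathtt{M}}$ is the empirical measure of $\mathbf{Y}^{J,1},\ldots,\mathbf{Y}^{J,\mathtt{M}}$. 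The principal obstacle is that the indexing class of sets is \emph{random} and produced by the very sample driving $\mathbb{P}_{\mathtt{M}}$, so classical Glivenko--Cantelli does not apply. The remedy, and the hardest ingredient, is a Vapnik--Chervonenkis style combinatorial reduction: enlarge the supremum to $B \in \bigcup_{\pmb{\mathcal{P}}\in \pmb{\mathcal{A}}_{\mathtt{M}}}\sigma(\pmb{\mathcal{P}})$ and observe that the number of distinct indicator vectors $\bigl(\mathbbm{1}_{B}(\mathbf{Y}^{J,i})\bigr)_{i=1}^{\mathtt{M}}$ realised by such $B$ is bounded by $\Delta_{\mathtt{M}}^{\ast}(\pmb{\mathcal{A}}_{\mathtt{M}})\cdot 2^{\mathtt{m}(\pmb{\mathcal{A}}_{\mathtt{M}})}$ -- each of the at most $\Delta_{\mathtt{M}}^{\ast}(\pmb{\mathcal{A}}_{\mathtt{M}})$ cell patterns on the sample admits at most $2^{\mathtt{m}(\pmb{\mathcal{A}}_{\mathtt{M}})}$ subunions of cells. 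Hoeffding's inequality together with a union bound then yields, for every $\varepsilon>0$,
\[
\mathbb{P}\bigl[\mathbf{I}_{\mathrm{fluct}} > \varepsilon\bigr] \leq 8\, \Delta_{\mathtt{M}}^{\ast}(\pmb{\mathcal{A}}_{\mathtt{M}})\cdot 2^{\mathtt{m}(\pmb{\mathcal{A}}_{\mathtt{M}})}\cdot \exp\bigl(-\mathtt{M}\varepsilon^{2}/c\bigr)
\]
for a universal $c>0$. Hypotheses (a) and (b) force $\mathtt{M}^{-1}\bigl[\mathtt{m}(\pmb{\mathcal{A}}_{\mathtt{M}})\ln 2 + \ln \Delta_{\mathtt{M}}^{\ast}(\pmb{\mathcal{A}}_{\mathtt{M}})\bigr] \to 0$, so the right-hand side is summable in $\mathtt{M}$, and Borel--Cantelli upgrades the bound to $\mathbf{I}_{\mathrm{fluct}}\to 0$ $\mathbb{P}$-a.s., which together with the bias estimate completes the proof.
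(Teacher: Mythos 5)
The paper does not actually prove Theorem \ref{thmlugosi}: it is imported verbatim from Lugosi and Nobel \cite{lugosi} and used as a black box, so there is no internal proof to compare against. Your reconstruction is, in structure, precisely the Lugosi--Nobel argument: the bias/fluctuation split about the cell-averaged density $\bar{\mathtt{p}}^{J,\mathtt{M}}$, the Scheff\'e identity converting the fluctuation term into a uniform deviation of the empirical measure over the random $\sigma$-algebra generated by the partition, the combinatorial bound $\Delta_{\mathtt{M}}^{\ast}(\pmb{\mathcal{A}}_{\mathtt{M}})\cdot 2^{\mathtt{m}(\pmb{\mathcal{A}}_{\mathtt{M}})}$ on the trace of the class of cell-unions combined with symmetrization and Hoeffding (this is exactly \cite[Lemma 1 and Corollary 1]{lugosi}, which the paper itself invokes later in the proof of Theorem \ref{thmgessamanconvergence}), and Borel--Cantelli under (a)--(b). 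The one imprecise step is in your bias estimate: the contribution of the large-diameter cells to $\int|\varphi-\bar{\varphi}|\,\mathrm{d}\mathbf{x}$ is a Lebesgue-measure quantity, bounded by $2\lVert\varphi\rVert_{\mathbb{L}^{\infty}}\cdot\mathrm{vol}\bigl(D_{\gamma}\cap\mathrm{supp}\,\varphi\bigr)$ where $D_{\gamma}$ is the union of cells of diameter exceeding $\gamma$, whereas hypothesis (c) controls $\mathbb{P}_{\mathbf{Y}^{J}}(D_{\gamma})$; these are not comparable without extra assumptions on $p^{J}$. The standard repair is to peel off $D_{\gamma}$ \emph{before} introducing $\varphi$: since $D_{\gamma}$ is a union of cells, $p^{J}$ and $\bar{\mathtt{p}}^{J,\mathtt{M}}$ have the same integral over it, so $\int_{D_{\gamma}}|p^{J}-\bar{\mathtt{p}}^{J,\mathtt{M}}|\,\mathrm{d}\mathbf{x}\leq 2\,\mathbb{P}_{\mathbf{Y}^{J}}(D_{\gamma})$, which is exactly what (c) kills; the uniform-continuity argument is then applied only on the complement, where the relevant volume is that of a $\gamma$-neighbourhood of $\mathrm{supp}\,\varphi$ and hence finite. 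With that reordering your proof is complete and is the standard one.
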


\medskip

The next theorem states the strong $\mathbb{L}^{1}-$consistency of the estimator $\widehat{\mathtt{p}}^{J,\mathtt{M}}_{\mathtt{Ges}}$ of Algorithm \ref{strategy}, where step 3) is realised via  Scheme \ref{gessamanstrategy}.

\begin{theorem}
\label{thmgessamanconvergence}
Let $J\in \mathbb{N}$, and $\{t_{j}\}_{j=0}^{J}\subset [0,T]$ be a mesh with uniform step size $\tau=\frac{T}{J}$. Let $\mathtt{M}\in \mathbb{N}$ and $k_{\mathtt{M}}\in \mathbb{N}$ with $k_{\mathtt{M}}<\mathtt{M}$. Let $\mathbf{D}_{\mathtt{M}}^{J}:=\{\mathbf{Y}^{J,m}\}_{m=1}^{\mathtt{M}}$ be the sample set from \eqref{eq:euler}. If $k_{\mathtt{M}}\to \infty$ and $\frac{k_{\mathtt{M}}}{\mathtt{M}}\to 0$ as $\mathtt{M}\to \infty$, then {\em Gessaman's} estimator $\widehat{\mathtt{p}}^{J,\mathtt{M}}_{\mathtt{Ges}}$ in Algorithm \ref{strategy}, where step 3) is realised via  Scheme \ref{gessamanstrategy}, is strongly $\mathbb{L}^{1}-$consistent, i.e.,
\begin{equation*}
\lVert p^{J}-\widehat{\mathtt{p}}^{J,\mathtt{M}}_{\mathtt{Ges}}\rVert_{\mathbb{L}^{1}}\to 0 \qquad \mathbb{P}-a.s. \qquad \text{(as } \mathtt{M}\to \infty)\,.
\end{equation*}
\end{theorem}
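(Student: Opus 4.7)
The plan is to apply Theorem~\ref{thmlugosi} to the class $\pmb{\mathcal{A}}_{\mathtt{M}}$ of all partitions realisable by Gessaman's rule~\ref{gessamanstrategy}, thereby obtaining $\lVert p^{J}-\widehat{\mathtt{p}}^{J,\mathtt{M}}_{\mathtt{Ges}}\rVert_{\mathbb{L}^{1}}\to 0$ $\mathbb{P}$-a.s.; combined with Theorem~\ref{modballythm} via the triangle inequality~\eqref{eq:convergencestep1}, this yields~\eqref{eq:convergence}. The bulk of the work is the verification of the three hypotheses (a)--(c) of Theorem~\ref{thmlugosi} for this specific tree-structured class.

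Hypothesis (a) is immediate: a Gessaman partition has exactly $\mathtt{N}^{d}$ cells with $\mathtt{N}=\lceil(\mathtt{M}/k_{\mathtt{M}})^{1/d}\rceil$, so $\mathtt{m}(\pmb{\mathcal{A}}_{\mathtt{M}})\leq 2^{d}\mathtt{M}/k_{\mathtt{M}}$ and therefore $\mathtt{M}^{-1}\mathtt{m}(\pmb{\mathcal{A}}_{\mathtt{M}})\leq 2^{d}/k_{\mathtt{M}}\to 0$. Hypothesis (b) is obtained by a combinatorial count along the $\mathtt{N}$-ary tree of Figure~\ref{tree}~(a): at level $i$ one performs $\mathtt{N}-1$ axis-aligned cuts inside each of the $\mathtt{N}^{i-1}$ inherited sub-rectangles, and each cut induces at most $\mathtt{M}+1$ distinct subsets of any fixed $\mathtt{M}$-point set. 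Multiplying the bounds for $i=1,\dots,d$ gives
\begin{equation*}
\Delta^{\ast}_{\mathtt{M}}(\pmb{\mathcal{A}}_{\mathtt{M}})\leq (\mathtt{M}+1)^{(\mathtt{N}-1)\sum_{i=0}^{d-1}\mathtt{N}^{i}}=(\mathtt{M}+1)^{\mathtt{N}^{d}-1},
\end{equation*}
so that $\mathtt{M}^{-1}\ln\Delta^{\ast}_{\mathtt{M}}(\pmb{\mathcal{A}}_{\mathtt{M}})\lesssim \mathtt{N}^{d}\ln(\mathtt{M})/\mathtt{M}\lesssim \ln(\mathtt{M})/k_{\mathtt{M}}\to 0$ under the stated asymptotic growth of $k_{\mathtt{M}}$.

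The crux is hypothesis (c). I would proceed by a truncation plus tree-recursive Glivenko--Cantelli argument. Given $\varepsilon,\gamma>0$, pick a compact cube $K_{\varepsilon}\subset\mathbb{R}^{d}$ with $\mathbb{P}_{\mathbf{Y}^{J}}(K_{\varepsilon}^{c})<\varepsilon$; since $p^{J}\in\mathbb{L}^{1}(\mathbb{R}^{d})$ this is always possible, and it reduces (c) to showing $\mathbb{P}_{\mathbf{Y}^{J}}\bigl(\{\mathbf{x}\in K_{\varepsilon}:\mathtt{diam}(\pmb{\mathcal{P}}^{J,\mathtt{M}}_{\mathtt{Ges}}[\mathbf{x}])>\gamma\}\bigr)\to 0$ $\mathbb{P}$-a.s., after which a standard intersection of null events over a countable $\varepsilon$-sequence removes the truncation. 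The level-one cuts of Gessaman's rule coincide with the empirical $\ell/\mathtt{N}$-quantiles ($\ell=1,\dots,\mathtt{N}-1$) of the $x_{1}$-marginal of $p^{J}$. By Glivenko--Cantelli and the strict positivity and smoothness of $p^{J}$ granted by assumptions \textbf{(A1)}--\textbf{(A3)}, these empirical quantiles converge $\mathbb{P}$-a.s.\ and uniformly in $\ell$ to the corresponding true quantiles; since $\mathtt{N}\to\infty$, neighbouring true quantiles coalesce on the bounded set $K_{\varepsilon}$, forcing the first coordinate side length of $\pmb{\mathcal{P}}^{J,\mathtt{M}}_{\mathtt{Ges}}[\mathbf{x}]$ to vanish. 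Iterating the argument on levels $i=2,\dots,d$ --- this time for the conditional laws of $p^{J}$ within each strip produced at level $i-1$ --- then yields the same conclusion for every coordinate side length, giving (c).

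The principal obstacle is propagating the $\mathbb{P}$-almost-sure convergence through the $d$ nested conditional levels: at level $i$ the cuts act on the \emph{random} sub-sample inside a level-$(i-1)$ strip, so one needs a Glivenko--Cantelli statement that is uniform with respect to that random stratification. I would handle this by combining the Dvoretzky--Kiefer--Wolfowitz exponential bound with Borel--Cantelli applied recursively at each level, exploiting that each intermediate sub-sample still contains $\mathtt{M}/\mathtt{N}^{i-1}\to\infty$ points so that the associated empirical conditional distributions concentrate at a summable exponential rate.
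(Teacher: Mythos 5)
Your overall skeleton is the same as the paper's: verify conditions (a)--(c) of Theorem \ref{thmlugosi} and combine with Theorem \ref{modballythm}. Condition (a) is fine. For (b), however, your bound $\Delta^{\ast}_{\mathtt{M}}(\pmb{\mathcal{A}}_{\mathtt{M}})\leq(\mathtt{M}+1)^{\mathtt{N}^{d}-1}$ only yields $\mathtt{M}^{-1}\ln\Delta^{\ast}_{\mathtt{M}}(\pmb{\mathcal{A}}_{\mathtt{M}})\lesssim \ln(\mathtt{M})/k_{\mathtt{M}}$, and the theorem's hypotheses ($k_{\mathtt{M}}\to\infty$, $k_{\mathtt{M}}/\mathtt{M}\to 0$) do \emph{not} imply $\ln(\mathtt{M})/k_{\mathtt{M}}\to 0$ (take $k_{\mathtt{M}}=\lceil\ln\ln\mathtt{M}\rceil$). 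The paper closes this by the sharper count $\Delta^{\ast}_{\mathtt{M}}(\pmb{\mathcal{A}}_{\mathtt{M}})=\binom{\mathtt{M}+\mathtt{R}_{\mathtt{Ges}}}{\mathtt{M}}^{d}$ together with the binary-entropy inequality $\ln\binom{s}{t}\leq s\,h(t/s)$, which gives $\mathtt{M}^{-1}\ln\Delta^{\ast}_{\mathtt{M}}(\pmb{\mathcal{A}}_{\mathtt{M}})\leq 2d\,h(1/k_{\mathtt{M}})\to 0$ using only $k_{\mathtt{M}}\to\infty$. You need this (or an equivalent refinement) for (b) to hold under the stated assumptions.

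The more serious issue is (c), which you yourself identify as the crux but do not actually prove. Your quantile-coalescence route requires (i) a Glivenko--Cantelli statement uniform over the \emph{random} strata produced at earlier levels, and (ii) lower bounds on the conditional densities within each stratum on the compact set, so that neighbouring true conditional quantiles coalesce; neither is carried out, and (ii) implicitly uses positivity and regularity of $p^{J}$ and of its conditionals that the theorem does not assume and that degenerate for thin or unbounded strips. The paper's argument avoids all of this by pure counting: after truncating to $[-L,L]^{d}$, a cell of diameter $\geq\gamma$ has some side of length $\geq\gamma/\sqrt{d}$, and since Gessaman's rule makes at most $\mathtt{N}$ cuts per coordinate axis, at most $\bigl(\sqrt{d}\,d\,\tfrac{2L}{\gamma}+2d\bigr)\mathtt{N}$ cells meeting the cube can be that long. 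Writing each true cell probability as $k_{\mathtt{M}}/\mathtt{M}$ plus an error and summing, the mass of the long cells is at most $\tfrac{k_{\mathtt{M}}}{\mathtt{M}}\cdot O(\mathtt{N})+\mathbf{E}_{\mathtt{M}}+\varepsilon$ with $\mathbf{E}_{\mathtt{M}}:=\sum_{r}\lvert\mathbb{P}_{\mathbf{Y}^{J}}[\mathbf{R}_{r}]-k_{\mathtt{M}}/\mathtt{M}\rvert$; the first term is $O\bigl((k_{\mathtt{M}}/\mathtt{M})^{1-1/d}\bigr)\to 0$, and $\mathbf{E}_{\mathtt{M}}\to 0$ $\mathbb{P}$-a.s.\ follows from (a) and (b) via \cite[Corollary 1]{lugosi}. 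Unless you are prepared to fully develop the uniform empirical-process argument over data-dependent strips, you should replace your sketch of (c) by this counting argument.
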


\medskip

We prove Theorem \ref{thmgessamanconvergence} by verifying the conditions (a), (b), and (c) in Theorem \ref{thmlugosi}. As we will see, the conditions (a) and (b) automatically follow from the concept of {\em statistically equivalent cells} in combination with an appropriate choice of $\mathtt{M}$ and $k_{\mathtt{M}}$. Here, we closely follow the arguments of \cite[Theorem 4]{lugosi}, which verifies the conditions (a), (b), and (c) of Theorem \ref{thmlugosi} in the case of $d=1$, and therefore proves Theorem \ref{thmgessamanconvergence} for $d=1$.
The verification of condition (c) for $d\geq 2$, however, is more delicate, and we are not aware of any results in the literature where this part is rigorously proved for Gessaman's rule \ref{gessamanstrategy}.

\medskip

\begin{proof}[Proof of Theorem \ref{thmgessamanconvergence}.]
Let $d\geq 2$.\\
{\bf Verification of (a) in Theorem \ref{thmlugosi}:} Since we are in the setting of statistically equivalent cells, the number of cells in every partition in $\pmb{\mathcal{A}}_{\mathtt{M}}:=\{\pmb{\mathcal{P}}^{J,\mathtt{M}}_{\mathtt{Ges}}(\mathbf{z}_{1},...,\mathbf{z}_{\mathtt{M}})\,:\,\mathbf{z}_{1},...,\mathbf{z}_{\mathtt{M}}\in \mathbb{R}^{d}\}$ is equal to $\mathtt{R}_{\mathtt{Ges}}:=\mathtt{N}^{d}:=\Big\lceil \left(\frac{\mathtt{M}}{k_{\mathtt{M}}}\right)^{\nicefrac{1}{d}} \Big \rceil^{d}$; see Gessaman's rule \ref{gessamanstrategy}. Hence, due to the growth of $k_{\mathtt{M}}$ related to $\mathtt{M}$, we get
\begin{equation*}
\mathtt{M}^{-1}\cdot \mathtt{m}(\pmb{\mathcal{A}}_{\mathtt{M}})=\frac{1}{\mathtt{M}} \cdot \Big\lceil \left(\frac{\mathtt{M}}{k_{\mathtt{M}}}\right)^{\nicefrac{1}{d}} \Big \rceil^{d} \to 0 \qquad (\text{as } \mathtt{M}\to \infty)\,.
\end{equation*}
{\bf Verification of (b) in Theorem \ref{thmlugosi}:} A combinatorial argument immediately yields ({\em cf.}~\cite[Subsec.~6.3]{lugosi})
\begin{equation}
\label{combiargument}
\Delta_{\mathtt{M}}^{\ast}(\pmb{\mathcal{A}}_{\mathtt{M}})=\binom{\mathtt{M}+\mathtt{R}_{\mathtt{Ges}}}{\mathtt{M}}^{d}\,.
\end{equation}
We consider the {\em binary entropy function} $h(x)=-x\ln(x)-(1-x)\ln(1-x)$, $x\in (0,1)$. We see that $h$ is increasing on $(0,\nicefrac{1}{2}]$, symmetric about $\nicefrac{1}{2}$, and $h(x)\to 0$ as $x\to 0$. Using \eqref{combiargument}, the inequality
\begin{equation*}
\ln \left( \binom{s}{t} \right)\leq s h\left( \nicefrac{t}{s}\right)\,,\qquad s,t \in \mathbb{N}
\end{equation*}
stated in \cite{koerner}, and the fact that $h$ is symmetric about $\nicefrac{1}{2}$, we get
\begin{align*}
\ln \bigl( \Delta_{\mathtt{M}}^{\ast}(\pmb{\mathcal{A}}_{\mathtt{M}})\bigr)&=\ln \left( \binom{\mathtt{M}+\mathtt{R}_{\mathtt{Ges}}}{\mathtt{M}}^{d} \right)=d \cdot \ln \left( \binom{\mathtt{M}+\mathtt{R}_{\mathtt{Ges}}}{\mathtt{M}} \right) \leq d\cdot (\mathtt{M}+\mathtt{R}_{\mathtt{Ges}})\cdot h\left( \frac{\mathtt{M}}{\mathtt{M}+\mathtt{R}_{\mathtt{Ges}}}\right)\\
& = d\cdot(\mathtt{M}+\mathtt{R}_{\mathtt{Ges}})\cdot h\left( \frac{\mathtt{R}_{\mathtt{Ges}}}{\mathtt{M}+\mathtt{R}_{\mathtt{Ges}}}\right)\,.
\end{align*}
Since $\mathtt{M}\geq \mathtt{R}_{\mathtt{Ges}}$, $\frac{\mathtt{R}_{\mathtt{Ges}}}{\mathtt{M}+\mathtt{R}_{\mathtt{Ges}}}\leq \frac{1}{k_{\mathtt{M}}}$, we obtain
\begin{equation*}
\ln \bigl( \Delta_{\mathtt{M}}^{\ast}(\pmb{\mathcal{A}}_{\mathtt{M}})\bigr)\leq 2d\cdot \mathtt{M}\cdot h\left( \frac{1}{k_{\mathtt{M}}}\right)\,.
\end{equation*}
Hence, using the properties of $h$ and that $k_{\mathtt{M}}\to \infty$ as $\mathtt{M}\to \infty$, we conclude
\begin{equation*}
\mathtt{M}^{-1}\cdot \ln \bigl( \Delta_{\mathtt{M}}^{\ast}(\pmb{\mathcal{A}}_{\mathtt{M}})\bigr)\leq 2d \cdot h\left( \frac{1}{k_{\mathtt{M}}}\right) \to 0 \qquad (\text{as } \mathtt{M}\to \infty)\,.
\end{equation*}
{\bf Verification of (c) in Theorem \ref{thmlugosi}:} Fix $\gamma>0$, $\varepsilon>0$, and let $L\equiv L(\varepsilon)>0$ be such that $\mathbb{P}_{\mathbf{Y}^{J}}[ \mathbb{R}^{d}\setminus [-L,L]^{d}]\leq \varepsilon$. 
Because all cells in the partition $\pmb{\mathcal{P}}^{J,\mathtt{M}}_{\mathtt{Ges}}$ are disjoint excluding the boundary, and due to the choice of $L>0$ above, we find
\begin{align}
\label{c1}
\mathbb{P}_{\mathbf{Y}^{J}}\big[ &\{\mathbf{x}\in \mathbb{R}^{d}\,:\, \mathtt{diam}\bigl(\pmb{\mathcal{P}}^{J,\mathtt{M}}_{\mathtt{Ges}}[\mathbf{x}]\bigr)>\gamma \} \big]\leq \mathbb{P}_{\mathbf{Y}^{J}}\big[ \{\mathbf{x}\in \mathbb{R}^{d}\,:\, \mathtt{diam}\bigl(\pmb{\mathcal{P}}^{J,\mathtt{M}}_{\mathtt{Ges}}[\mathbf{x}]\bigr)>\gamma \} \cap [-L,L]^{d}\big] +\varepsilon \notag\\
&=\mathbb{P}_{\mathbf{Y}^{J}}\Bigg[   \bigcup\limits_{\substack{r\in \{0,...,\mathtt{R}_{\mathtt{Ges}}-1\}: \\  \mathtt{diam}(\mathbf{R}_{r})\geq\gamma}} \mathbf{R}_{r} \cap [-L,L]^{d} \Bigg] + \varepsilon \notag \\
& =  \sum\limits_{r=0}^{\mathtt{R}_{\mathtt{Ges}}-1} \mathbbm{1}_{\{\mathtt{diam}(\mathbf{R}_{r})\geq \gamma \}}\cdot \mathbb{P}_{\mathbf{Y}^{J}}\big[ \mathbf{R}_{r}\cap [-L,L]^{d}\big] +\varepsilon \notag \\
&\leq  \sum\limits_{\substack{r\in \{0,...,\mathtt{R}_{\mathtt{Ges}}-1\}: \\  \mathbf{R}_{r}\cap [-L,L]^{d}\neq \emptyset}} \mathbbm{1}_{\{\mathtt{diam}(\mathbf{R}_{r})\geq \gamma \}} \cdot \left(  \mathbb{P}_{\mathbf{Y}^{J}}[\mathbf{R}_{r}]-\frac{k_{\mathtt{M}}}{\mathtt{M}} + \frac{k_{\mathtt{M}}}{\mathtt{M}} \right)  +\varepsilon \notag \\
&\leq \frac{k_{\mathtt{M}}}{\mathtt{M}}\cdot \sum\limits_{\substack{r\in \{0,...,\mathtt{R}_{\mathtt{Ges}}-1\}:  \\ \mathbf{R}_{r}\cap [-L,L]^{d}\neq \emptyset}} \mathbbm{1}_{\{\mathtt{diam}(\mathbf{R}_{r})\geq \gamma \}} + \sum\limits_{\substack{r\in \{0,...,\mathtt{R}_{\mathtt{Ges}}-1\}:  \\ \mathbf{R}_{r}\cap [-L,L]^{d}\neq \emptyset}} \mathbbm{1}_{\{\mathtt{diam}(\mathbf{R}_{r})\geq \gamma \}} \cdot \Big \lvert \mathbb{P}_{\mathbf{Y}^{J}}[ \mathbf{R}_{r}]-\frac{k_{\mathtt{M}}}{\mathtt{M}}  \Big \rvert + \varepsilon\,.
\end{align}
Since each cell \q{$\mathbf{R}_{r}$} is a hyper-rectangle, we can write $\mathbf{R}_{r}=\prod\limits_{i=1}^{d} [a_{i}^{(r)},b_{i}^{(r)}]$ for $a_{i}^{(r)}<b_{i}^{(r)}$. If $\mathtt{diam}(\mathbf{R}_{r}):=\sqrt{\sum\limits_{i=1}^{d} \lvert b_{i}^{(r)}-a_{i}^{(r)}\rvert^{2}}$ is greater or equal to $\gamma$, then its longest side is greater or equal to $\frac{\gamma}{\sqrt{d}}$, i.e., $\max\limits_{i=1,...,d} b_{i}^{(r)}-a_{i}^{(r)} \geq \frac{\gamma}{\sqrt{d}}$.
Because the second sum on the right-hand side of \eqref{c1} is bounded from above by 
\begin{equation*}
\mathbf{E}_{\mathtt{M}}:=\sum\limits_{r=0}^{\mathtt{R}_{\mathtt{Ges}}-1}  \Big \lvert \mathbb{P}_{\mathbf{Y}^{J}}[ \mathbf{R}_{r}]-\frac{k_{\mathtt{M}}}{\mathtt{M}}  \Big \rvert\,,
\end{equation*}
we therefore conclude from \eqref{c1}
\begin{align}
\label{c2}
\mathbb{P}_{\mathbf{Y}^{J}}\big[ &\{\mathbf{x}\in \mathbb{R}^{d}\,:\, \mathtt{diam}\bigl(\pmb{\mathcal{P}}^{J,\mathtt{M}}_{\mathtt{Ges}}[\mathbf{x}]\bigr)>\gamma \} \big]&\leq \frac{k_{\mathtt{M}}}{\mathtt{M}}\cdot \sum\limits_{\substack{r\in \{0,...,\mathtt{R}_{\mathtt{Ges}}-1\}: \\ \mathbf{R}_{r}\cap [-L,L]^{d}\neq \emptyset}} \mathbbm{1}_{\Big\{\max\limits_{i=1,...,d} b_{i}^{(r)}-a_{i}^{(r)} \geq \frac{\gamma}{\sqrt{d}} \Big\}} + \mathbf{E}_{\mathtt{M}} + \varepsilon\,.
\end{align}
In the following, we estimate the first sum on the right-hand side of \eqref{c2}. Standard calculations lead to
\begin{align}
\label{c3}
\sum\limits_{\substack{r\in \{0,...,\mathtt{R}_{\mathtt{Ges}}-1\}:  \\ \mathbf{R}_{r}\cap [-L,L]^{d}\neq \emptyset}}& \mathbbm{1}_{\big\{\max\limits_{i=1,...,d} b_{i}^{(r)}-a_{i}^{(r)}\geq \tfrac{\gamma}{\sqrt{d}} \big\}} \notag \\
&\leq \sum\limits_{\substack{r\in \{0,...,\mathtt{R}_{\mathtt{Ges}}-1\}:  \\ \mathbf{R}_{r}\subseteq [-L,L]^{d}}} \mathbbm{1}_{\big\{\max\limits_{i=1,...,d} b_{i}^{(r)}-a_{i}^{(r)}\geq \tfrac{\gamma}{\sqrt{d}} \big\}} + \sum\limits_{\substack{r\in \{0,...,\mathtt{R}_{\mathtt{Ges}}-1\}:  \\ \mathbf{R}_{r}\cap \left(\mathbb{R}^{d} \setminus [-L,L]^{d}\right)\neq \emptyset}} \mathbbm{1}_{\big\{\max\limits_{i=1,...,d} b_{i}^{(r)}-a_{i}^{(r)}\geq \tfrac{\gamma}{\sqrt{d}} \big\}} \notag \\
&\leq \#\Big\{r=0,...,\mathtt{R}_{\mathtt{Ges}}-1\,:\,\mathbf{R}_{r}\subseteq [-L,L]^{d},\,  \max\limits_{i=1,...,d} b_{i}^{(r)}-a_{i}^{(r)}\geq \tfrac{\gamma}{\sqrt{d}}  \Big\} \notag \\
& \qquad \qquad + \#\Big\{r=0,...,\mathtt{R}_{\mathtt{Ges}}-1\,:\,\mathbf{R}_{r}\cap \left(\mathbb{R}^{d} \setminus [-L,L]^{d}\right)\neq \emptyset  \Big\} \notag \\
&\leq \sum\limits_{i=1}^{d}\#\Big\{r=0,...,\mathtt{R}_{\mathtt{Ges}}-1\,:\,\mathbf{R}_{r}\subseteq [-L,L]^{d},\,  b_{i}^{(r)}-a_{i}^{(r)}\geq \tfrac{\gamma}{\sqrt{d}}  \Big\} \notag \\
& \qquad \qquad + \#\Big\{r=0,...,\mathtt{R}_{\mathtt{Ges}}-1\,:\,\mathbf{R}_{r}\cap \left(\mathbb{R}^{d} \setminus [-L,L]^{d}\right)\neq \emptyset  \Big\}\,.
\end{align}
Since there are at most $\frac{2L}{\nicefrac{\gamma}{\sqrt{d}}}$ disjoint intervals of length greater than $\nicefrac{\gamma}{\sqrt{d}}$ in $[-L,L]$, and the number of cuts along any coordinate axis in Gessaman's rule \ref{gessamanstrategy} is at most $\mathtt{N}:=\Big\lceil \left(\frac{\mathtt{M}}{k_{\mathtt{M}}}\right)^{\nicefrac{1}{d}} \Big \rceil$, we can bound the first sum in \eqref{c3} by $\sum\limits_{i=1}^{d} \frac{2L}{\nicefrac{\gamma}{\sqrt{d}}} \cdot \mathtt{N}= d \cdot \sqrt{d} \cdot \frac{2L}{\gamma} \cdot \Big\lceil \left(\frac{\mathtt{M}}{k_{\mathtt{M}}}\right)^{\nicefrac{1}{d}} \Big \rceil $ from above. Furthermore, since $[-L, L]^{d}$ has $2d$ sides, we can bound the second summand in \eqref{c3} by $2d\cdot \Big\lceil \left(\frac{\mathtt{M}}{k_{\mathtt{M}}}\right)^{\nicefrac{1}{d}} \Big \rceil$ from above. Thus, we conclude
\begin{equation}
\label{c4}
\sum\limits_{\substack{r\in \{0,...,\mathtt{R}_{\mathtt{Ges}}-1\}:  \\ \mathbf{R}_{r}\cap [-L,L]^{d}\neq \emptyset}} \mathbbm{1}_{\big\{\max\limits_{i=1,...,d} b_{i}^{(r)}-a_{i}^{(r)}\geq \tfrac{\gamma}{\sqrt{d}} \big\}}\leq \left(\sqrt{d}\cdot d \cdot \frac{2L}{\gamma} + 2d\right)\cdot \Big\lceil \left(\frac{\mathtt{M}}{k_{\mathtt{M}}}\right)^{\nicefrac{1}{d}} \Big \rceil\,.
\end{equation}
Hence, combining \eqref{c4} with \eqref{c2}, we get
\begin{equation}
\label{c5}
\mathbb{P}_{\mathbf{Y}^{J}}\big[ \{\mathbf{x}\in \mathbb{R}^{d}\,:\, \mathtt{diam}\bigl(\pmb{\mathcal{P}}^{J,\mathtt{M}}_{\mathtt{Ges}}[\mathbf{x}]\bigr)>\gamma \} \big]\leq  \frac{k_{\mathtt{M}}}{\mathtt{M}}\cdot \left(\sqrt{d}\cdot d \cdot \frac{2L}{\gamma} + 2d\right)\cdot \Big\lceil \left(\frac{\mathtt{M}}{k_{\mathtt{M}}}\right)^{\nicefrac{1}{d}} \Big \rceil + \mathbf{E}_{\mathtt{M}} +\varepsilon\,.
\end{equation}
Since (a) and (b) in Theorem \ref{thmlugosi} are verified, an application of \cite[Corollary 1]{lugosi} yields
\begin{equation}
\label{c6}
\mathbf{E}_{\mathtt{M}} \to 0 \quad \mathbb{P}-a.s. \quad (\text{ as } \mathtt{M}\to \infty)\,.
\end{equation}
Finally, by \eqref{c5} and \eqref{c6} we conclude
\begin{equation*}
\lim\limits_{\mathtt{M}\to \infty} \mathbb{P}_{\mathbf{Y}^{J}}\big[ \{\mathbf{x}\in \mathbb{R}^{d}\,:\, \mathtt{diam}\bigl(\pmb{\mathcal{P}}^{J,\mathtt{M}}_{\mathtt{Ges}}[\mathbf{x}]\bigr)>\gamma \} \big] = \varepsilon \quad \mathbb{P}-a.s.\,,
\end{equation*}
which verifies condition (c) of Theorem \ref{thmlugosi}.

\end{proof}

\medskip

\begin{remark}
\label{BTCconvergence}
It is easy to verify parts (a) and (b) in Theorem \ref{thmlugosi} for the BTC-rule \ref{moddunststrategy}; to verify (c) in this case has to remain open at this stage.
\end{remark}

\section{Computational experiments}
\label{4}

All simulations are conducted via $\mathtt{PYTHON}$ (version 3.11) on a {\em HP ProBook 455 15.6 inch G9 Notebook PC (Processor: AMD Ryzen 7 5825U with Radeon Graphics, 2000 MHz)} and on a {\em Dual Xeon 6242 Workstation with 768 GB RAM}. In all computational experiments, the samples of the initial conditions are generated with Numpy’s $\mathtt{random.multivariate\_normal}$ command.
For the computational evaluation of errors in the $\mathbb{L}^{1}-$norm we use {\em Monte Carlo integration}: 
\begin{equation*}
\lVert f \rVert_{\mathbb{L}^{1}}:=\int\limits_{\mathbb{R}^{d}} \lvert f(\mathbf{x})\rvert \,\mathrm{d}\mathbf{x}\approx \sum\limits_{r=0}^{\mathtt{R}-1} \mathtt{vol}(\mathbf{R}_{r})\cdot \frac{1}{\mathsf{N}}\sum\limits_{n=1}^{\mathsf{N}} \lvert f(\mathbf{x}^{(r)}_{i})\rvert\,,
\end{equation*} 
where $\mathsf{N}\in \mathbb{N}$ is sufficiently large, and $\mathbf{x}^{(r)}_{i}$ is a randomly drawn point in the cell $\mathbf{R}_{r}$.\\
In order to approximate the $\mathbb{L}^{\infty}-$norm, we evaluate the error on each cell $\mathbf{R}_{r}$ at randomly drawn points $\mathbf{x}^{(r)}_{i}\in \mathbf{R}_{r}$ in it, i.e.,
\begin{equation*}
\lVert f \rVert_{\mathbb{L}^{\infty}}:=\esssup\limits_{\mathbf{x}\in \mathbb{R}^{d}} \lvert f(\mathbf{x})\rvert\approx \max\limits_{r=0,...,\mathtt{R}-1} \max\limits_{i=1,...,\mathsf{N}} \lvert f(\mathbf{x}_{i}^{(r)}) \rvert\,,
\end{equation*}
for some $\mathsf{N}\in \mathbb{N}$.

\medskip

The examples in Section \ref{1} showed the following properties of the density estimator in Algorithm \ref{strategy}:\\
$\bullet$ Example \ref{example1}: for $d=2$ and $\mathcal{L}^{\ast}\equiv \mathcal{L}^{\ast}(\mathbf{x})$, we see an adapted mesh that automatically adjusts to the dynamics of $j \mapsto \widehat{\mathtt{p}}_{\mathtt{Ges}}^{j,\mathtt{M}}$.\\
$\bullet$ Example \ref{example2}: for $d=5$, the second density estimator $\widehat{\mathtt{p}}_{\mathtt{BTC}}^{J,\mathtt{M}}$ is more efficient, and generates more accurate meshes and results; see also Remark \ref{BTCconvergence}.\\
$\bullet$ Example \ref{example3}: we observe corresponding behaviors for $d=8$, and an operator $\mathcal{L}^{\ast}\equiv \mathcal{L}^{\ast}(\mathbf{x})$ with non-constant coefficients.

\medskip

In the following, we ask about \q{optimal} choices for $\mathtt{M}$ and $k_{\mathtt{M}}$ in settings where $\varepsilon$ of Example \ref{example1} decreases, on using Algorithm \ref{strategy} with BTC-rule \ref{moddunststrategy}.

\begin{contiex1}
\label{contiexample1}
~\\
\begin{table}[h!]
\center
\begin{tabular}{lclclclc}
\toprule
 {\bf Setup} & $d$ & $T$ & $\tau$  & $\alpha$ & $\varepsilon$\\
\midrule
 {\bf a)}  & $2$ & $1$ & $\frac{1}{100}$  & $16$ & $4$\\
 {\bf b)}  & $2$ & $1$ & $\frac{1}{100}$  & $16$ & $\frac{1}{100}$\\
 {\bf c)}  & $2$ & $1$ & $\frac{1}{100}$  & $\frac{1}{20}$ & $1$\\
 {\bf d)}  & $2$ & $1$ & $\frac{1}{100}$  & $\frac{1}{20}$ & $\frac{1}{100}$\\
\bottomrule
\end{tabular}
\captionof{table}{Different setups for Example \ref{example1}.}
\label{tablescenarios}
\end{table}

\begin{figure}[h!]
\begin{minipage}[t]{.4\textwidth}
\centering
\includegraphics[scale=0.5]{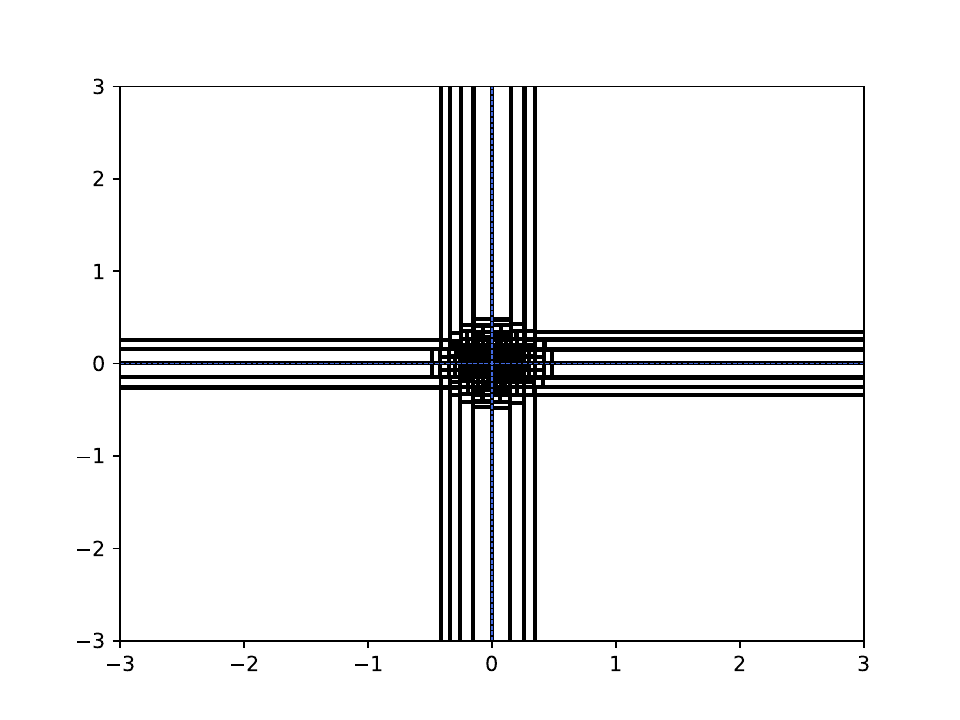}
\captionsetup{labelfont={bf}}
\subcaption{$t=0$}
\end{minipage}
\hspace{.1\linewidth}
\begin{minipage}[t]{.4\textwidth}
\centering
\includegraphics[scale=0.5]{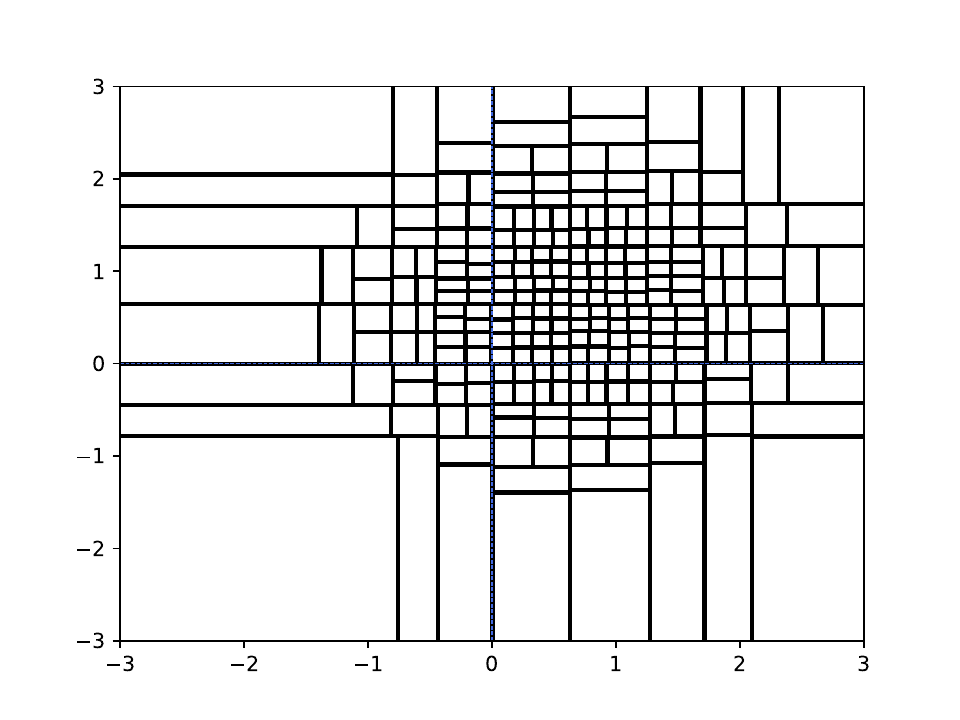}
\captionsetup{labelfont={bf}}
\subcaption{$t=T$}
\end{minipage}

\caption{{\bf Setup c):} Dynamics of the partition $\pmb{\mathcal{P}}_{\mathtt{BTC}}^{J,\mathtt{M}}$ with BTC-rule \ref{moddunststrategy} ($\mathtt{M}=2^{16}$, $k_{\mathtt{M}}=2^{8}$).}
\label{figex1_part_e}
\end{figure}

\begin{figure}[h!]
\begin{minipage}[t]{.4\textwidth}
\centering
\includegraphics[scale=0.5]{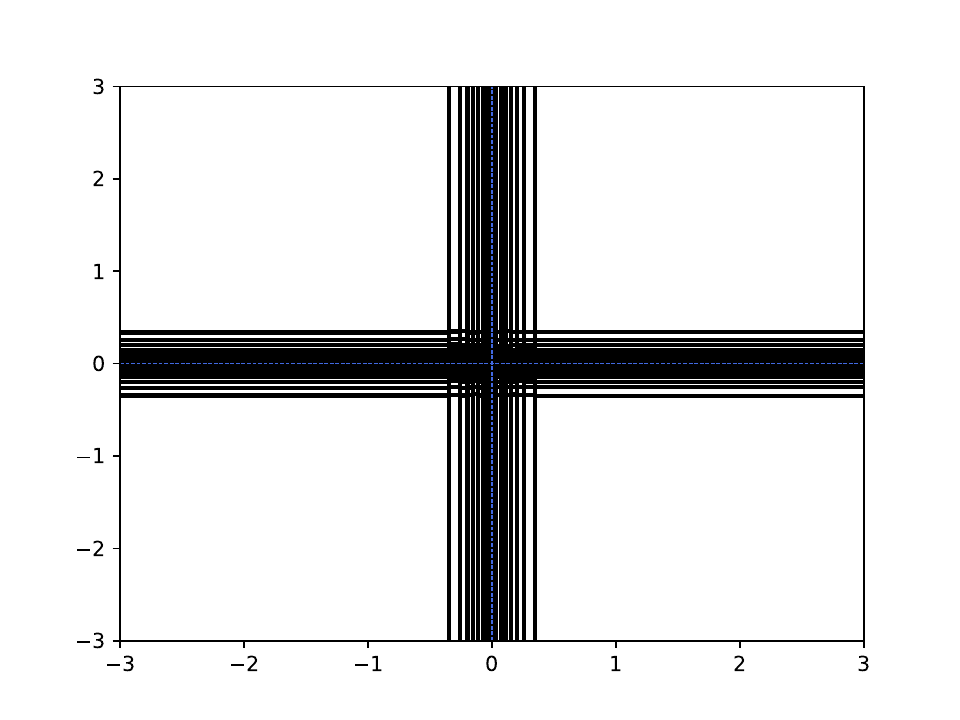}
\captionsetup{labelfont={bf}}
\subcaption{$t=0$}
\end{minipage}
\hspace{.1\linewidth}
\begin{minipage}[t]{.4\textwidth}
\centering
\includegraphics[scale=0.5]{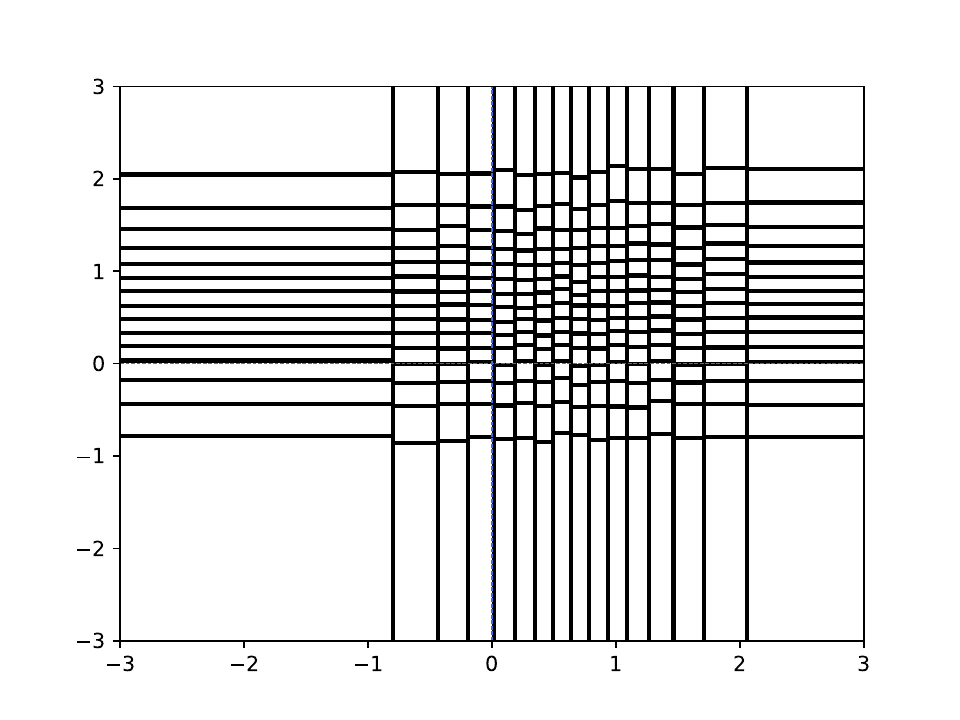}
\captionsetup{labelfont={bf}}
\subcaption{$t=T$}
\end{minipage}

\caption{{\bf Setup c):} Dynamics of the partition $\pmb{\mathcal{P}}_{\mathtt{Ges}}^{J,\mathtt{M}}$ with Gessaman's rule \ref{gessamanstrategy} ($\mathtt{M}=2^{16}$, $k_{\mathtt{M}}=2^{8}$).}
\label{figex1_part_e_gess}

\end{figure}

\begin{figure}[h!]
\begin{minipage}[t]{.4\textwidth}
\centering
\includegraphics[scale=0.6]{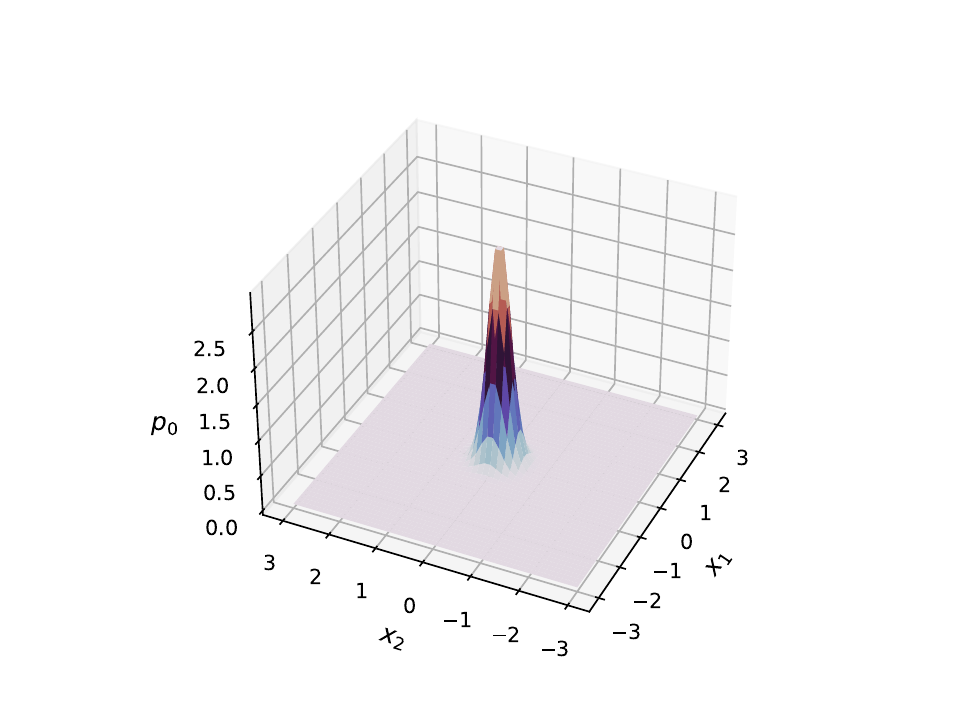}
\captionsetup{labelfont={bf}}
\subcaption{$t=0$}
\end{minipage}
\hspace{.08\linewidth}
\begin{minipage}[t]{.4\textwidth}
\centering
\includegraphics[scale=0.6]{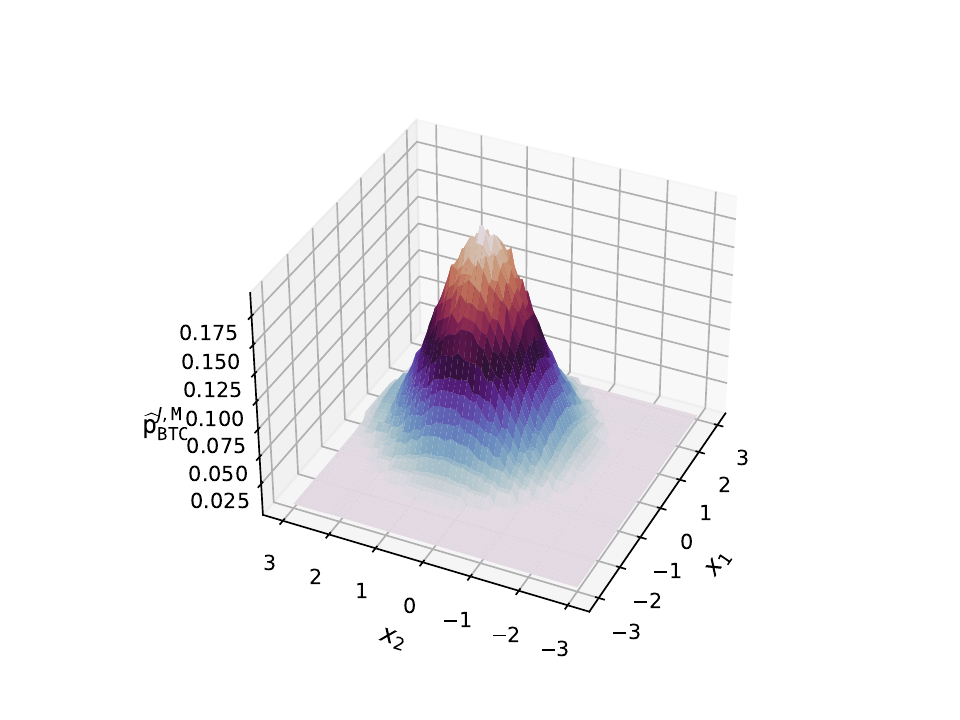}
\captionsetup{labelfont={bf}}
\subcaption{$t=T$}
\end{minipage}

\caption{{\bf Setup c):} Snapshots for $\widehat{\mathtt{p}}_{\mathtt{BTC}}^{J,\mathtt{M}}$ from Algorithm \ref{strategy} ($\mathtt{M}=2^{20}$, $k_{\mathtt{M}}=2^{10}$, $\mathtt{R}=2^{10}$).}
\label{figex1_sol_e}

\end{figure}

Solution \eqref{sol.ex1} depends on the choice of $\varepsilon$. In fact, we  have for $\mathbf{x}=\mathbf{m}_{t}$, where $p(t,\cdot)$ attains its maximum:
\begin{equation}
\label{limitexample}
\lim\limits_{t\to \infty} p(t,\mathbf{m}_{t})=\frac{1}{(2\pi \varepsilon^{2})^{\nicefrac{d}{2}}} = \mathcal{O}(\varepsilon^{-d})\,.
\end{equation}
By comparing the initial profile $p_{0}$ in \eqref{p0ex1} evaluated at $\mathbf{x}=\mathbf{0}$ and \eqref{limitexample}, the simulations show that for
\begin{itemize}
\item[(i)] $\varepsilon>\sqrt{\alpha}$, the profile of the solution \eqref{sol.ex1} (as $t\to \infty$) migrates in direction of $\pmb{\beta}$, and its \q{peak value} {\em flattens} to $\frac{1}{(2\pi \varepsilon^{2})^{\nicefrac{d}{2}}}$, while
\item[(ii)] for $\varepsilon<\sqrt{\alpha}$, the profile of the solution \eqref{sol.ex1} in fact migrates in direction of $\pmb{\beta}$ too, but its \q{peak value} grows and {\em sharpens} to $\frac{1}{(2\pi \varepsilon^{2})^{\nicefrac{d}{2}}}$; see Figure \ref{figex1_1}.
\end{itemize}

Below, we consider the following challenging scenarios presented in Table \ref{tablescenarios}.

The parameter $\alpha$ also regulates the {\em width} and {\em height} of $p_{0}$ in \eqref{p0ex1}: larger values for $\alpha$ lead to a broader peak of a lower height; whereas small values of $\alpha$ result in a thin and copped peak with a high maximum; see Figure \ref{figex1_sol_e} {\bf (a)}. Depending on the relative size of $\varepsilon$ and $\sqrt{\alpha}$, we observe a {\em flattening} resp.~{\em sharpening} of the peak structure up to a certain threshold. This {\em flattening} resp.~{\em sharpening} dynamics comes along with automatic adjustments of the underlying {\em data-dependent} partition on which the approximated solution $\widehat{\mathtt{p}}_{\mathtt{BTC}}^{J,\mathtt{M}}$ via Algorithm \ref{strategy} is realised: while {\em flattening} dynamics causes an \q{expansion/stretching} of the partition; see Figures \ref{figex1_part_e} and \ref{figex1_part_e_gess} in the setting of {\bf Setup c)}, {\em sharpening} dynamics lead to a \q{concentration/compression} of the partition; see Figure \ref{figex1} in Example \ref{example1}.

For a fixed $\mathtt{M}=2^{18}$ and varying $k_{\mathtt{M}}$, Figure \ref{figex1_errors} below shows $\mathbb{L}^{1}-$ resp.~$\mathbb{L}^{\infty}-$errors for $\widehat{\mathtt{p}}_{\mathtt{BTC}}^{J,\mathtt{M}}$ within the scenarios {\bf a)} -- {\bf d)} in Table \ref{tablescenarios}. While the $\mathbb{L}^{1}-$error curves in Figure \ref{figex1_errors} {\bf (a)} are almost the same in every scenario for varying $k_{\mathtt{M}}$, and suggest $k_{\mathtt{M}}\approx \mathtt{M}^{\frac{1}{2}-\beta}$ (for $\beta>0$ small) as optimal choice for smallest $\mathbb{L}^{1}-$errors, the $\mathbb{L}^{\infty}-$error curves in Figure \ref{figex1_errors} {\bf (b)} motivate $k_{\mathtt{M}}\approx \mathtt{M}^{\frac{3}{4}\pm \beta}$ (for $\beta>0$ small) for smallest $\mathbb{L}^{\infty}-$errors.

\begin{figure}[h!]
\begin{minipage}[t]{.4\textwidth}
\centering
\includegraphics[scale=0.5]{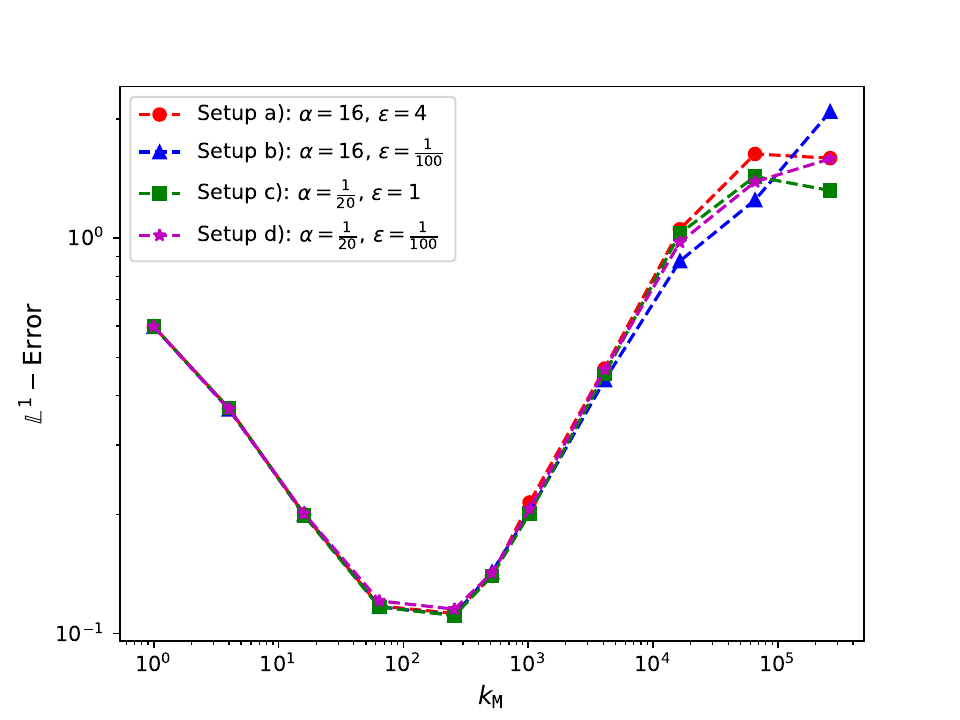}
\captionsetup{labelfont={bf}}
\subcaption{$k_{\mathtt{M}}\mapsto \lVert p(T,\cdot)-\widehat{\mathtt{p}}^{J,\mathtt{M}}_{\mathtt{BTC}}\rVert_{\mathbb{L}^{1}}$}
\end{minipage}
\hspace{.1\linewidth}
\begin{minipage}[t]{.4\textwidth}
\centering
\includegraphics[scale=0.5]{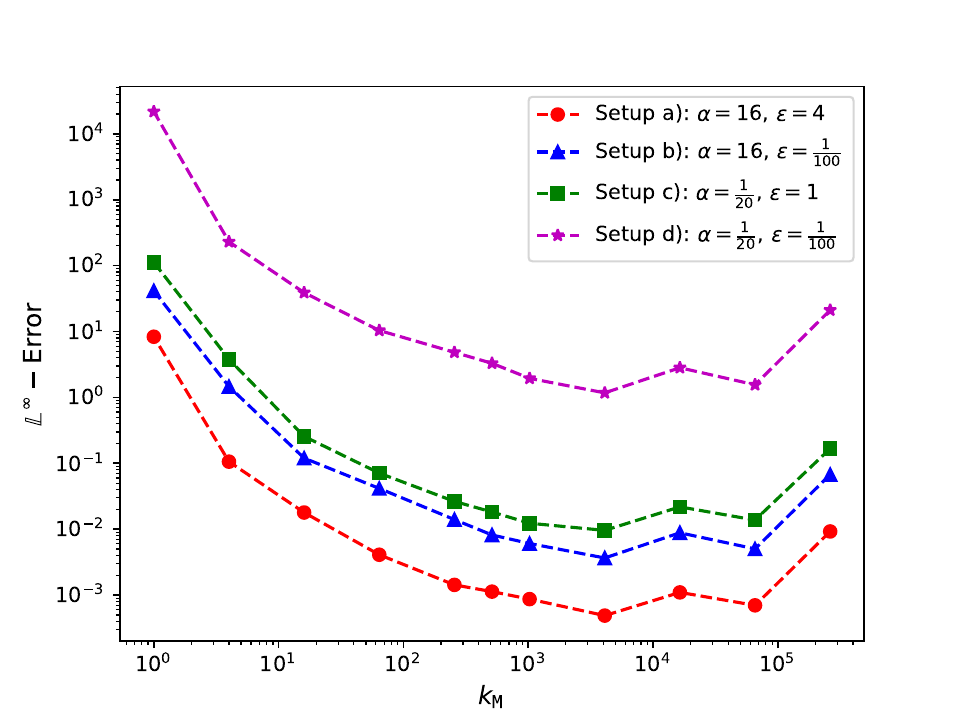}
\captionsetup{labelfont={bf}}
\subcaption{$k_{\mathtt{M}}\mapsto \lVert p(T,\cdot)-\widehat{\mathtt{p}}^{J,\mathtt{M}}_{\mathtt{BTC}}\rVert_{\mathbb{L}^{\infty}}$}
\end{minipage}

\caption{Log-plots of $\mathbb{L}^{1}-$ and $\mathbb{L}^{\infty}-$errors with fixed $\mathtt{M}=2^{18}$ and varying $k_{\mathtt{M}}$ within the settings {\bf a)}, {\bf b)}, {\bf c)} and {\bf d)}.}
\label{figex1_errors}

\end{figure}

%
%
%

\end{contiex1}

\medskip

The next example illustrates the performance of $\widehat{\mathtt{p}}_{\mathtt{BTC}}^{J,\mathtt{M}}$ in Algorithm \ref{strategy} for non-constant $\mathcal{L}^{\ast}\equiv \mathcal{L}^{\ast}(\mathbf{x})$ in \eqref{eq:operator} where the components in $\mathbf{x}\mapsto \mathbf{b}(\mathbf{x})$ and $\mathbf{x}\mapsto \pmb{\sigma}(\mathbf{x})$ are strongly coupled, and where the initial conndition $p_{0}$ is non-continuous.

\begin{example}
\label{example4}
Let $d=8$ and $T=1$. Consider \eqref{eq:fpe}, which corresponds to the associated SDE \eqref{eq:SDE} with
\begin{align*}
\mathbf{b}(\mathbf{x})={\small \begin{bmatrix}
0\\
0\\
2\\
2\\
\sin(x_{5})\\
x_{6}\\
x_{8}\\
-x_{7}\\
\end{bmatrix}}\,, \qquad 
\pmb{\sigma}(\mathbf{x})=0.1 \cdot 
{\small \begin{bmatrix}
1 & 0 & 0 & 0 & 0 & 0 & 0 & 0\\
0 & 1 & 0 & 0 & 0 & 0 & 0 & 0\\
0 & 0 & 1 & 0 & 0 & 0 & 0 & 0\\
0 & 0 & 0 & 1 & 0 & 0 & 0 & 0\\
0 & 0 & 0 & 0 & 1 & 0 & 0 & 0\\
0 & 0 & 0 & 0 & 0 & 1 & 0 & 0\\
0 & 0 & 0 & 0 & 0 & 0 & 1 & 0 \\
\arctan(x_{1})^{2} & 0 & 0 & 0 & 0 & 0 & 0 & 2+\arctan(x_{1})^{2}\\
\end{bmatrix}}\,,
\end{align*}
Let $p_{0}$ be the pdf which corresponds to the uniform distribution on the hypercube $[-0.5,0.5]^{d}$. For a fixed choice of $\tau=0.01$, $\mathtt{M}=2^{32}$ and $k_{\mathtt{M}}=2^{10}$, Figure \ref{ex4dynamics} shows approximated solution profiles $\widehat{\mathtt{p}}_{\mathtt{BTC}}^{J,\mathtt{M}}$ when restricted to different coordinate axes, indicating expectable dynamics.


\begin{figure}[h!]
\begin{minipage}[t]{.25\textwidth}
\centering
\includegraphics[scale=0.2]{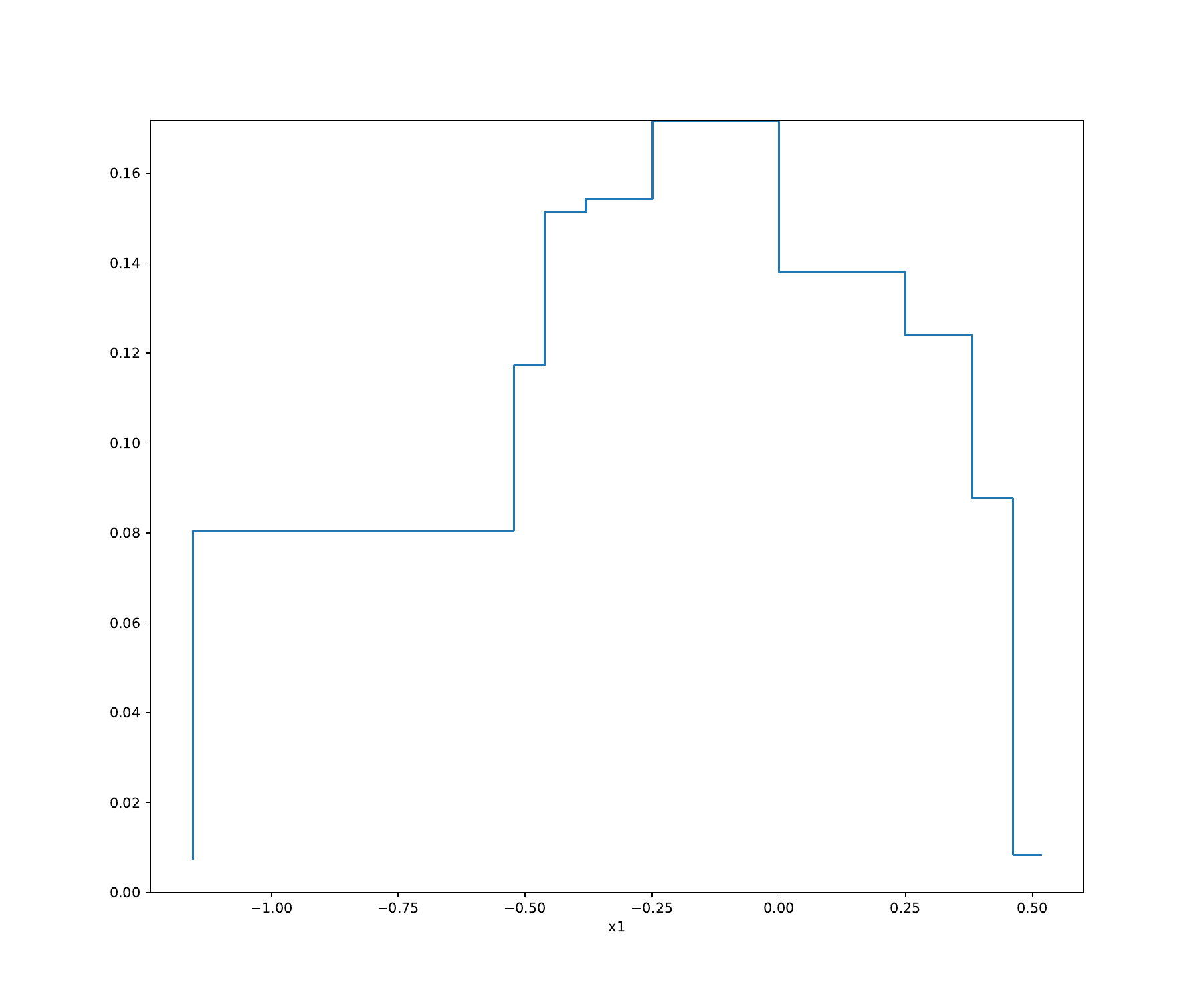}
\captionsetup{labelfont={bf}}
\subcaption{$\restr{\widehat{\mathtt{p}}^{J,\mathtt{M}}_{\mathtt{BTC}}}{\mathbf{P_{1}}}$}
\end{minipage}
\hspace{.08\linewidth}
\begin{minipage}[t]{.25\textwidth}
\centering
\includegraphics[scale=0.2]{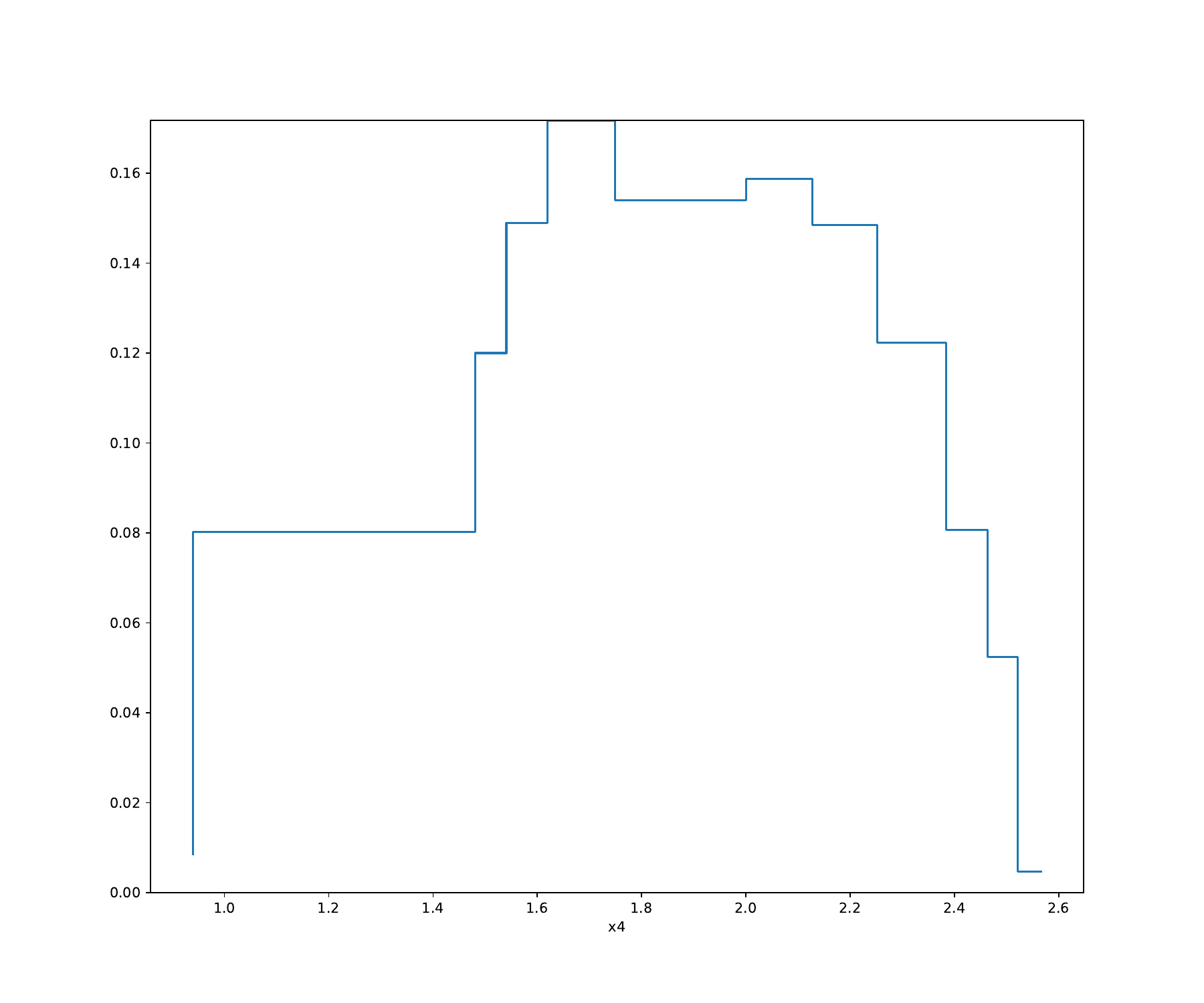}
\captionsetup{labelfont={bf}}
\subcaption{$\restr{\widehat{\mathtt{p}}^{J,\mathtt{M}}_{\mathtt{BTC}}}{\mathbf{P_{4}}}$}
\end{minipage}
\hspace{.08\linewidth}
\begin{minipage}[t]{.25\textwidth}
\centering
\includegraphics[scale=0.2]{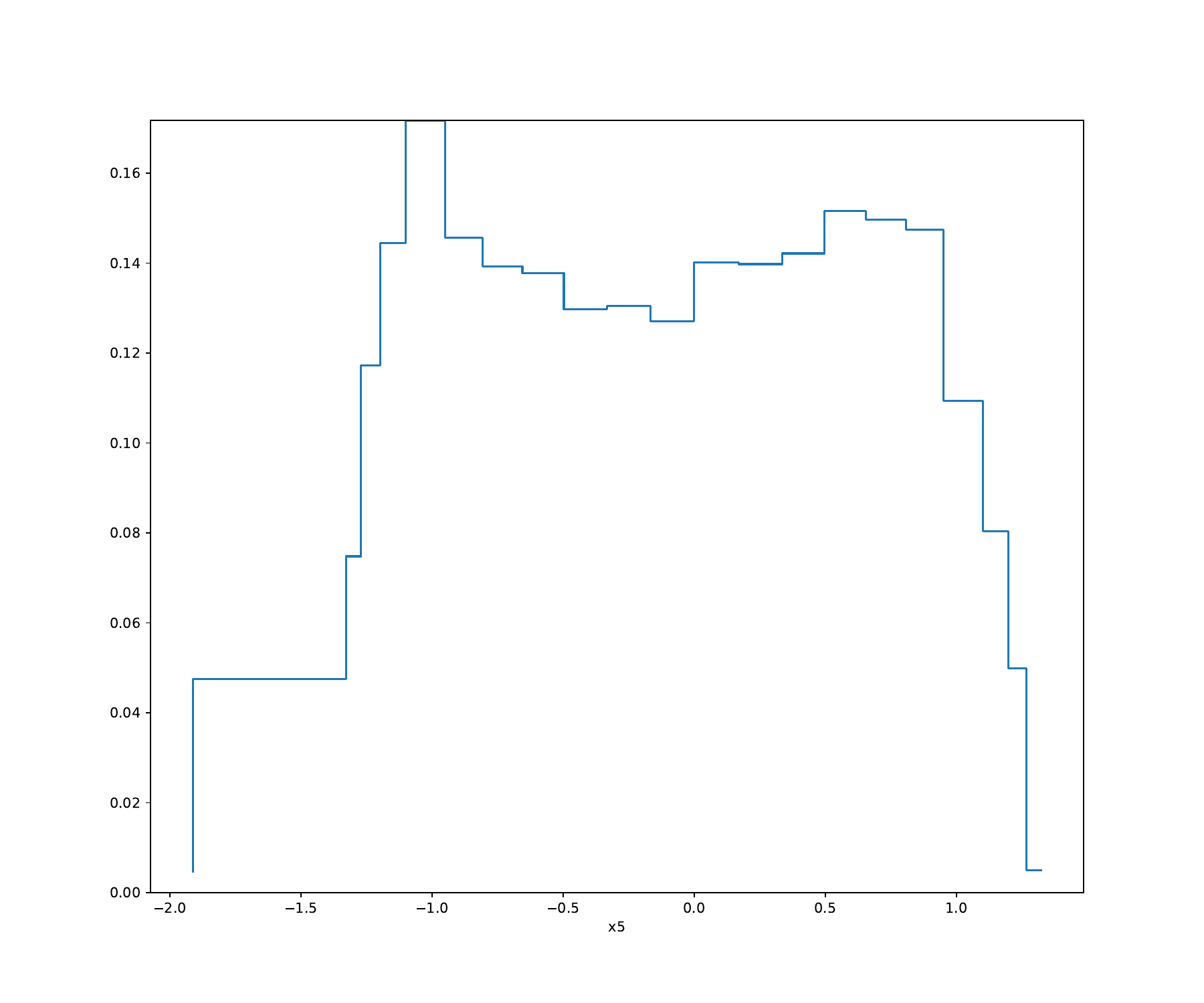}
\captionsetup{labelfont={bf}}
\subcaption{$\restr{\widehat{\mathtt{p}}^{J,\mathtt{M}}_{\mathtt{BTC}}}{\mathbf{P_{5}}}$}
\end{minipage}

\caption{ Example \ref{example4}: Approximated solution profile restricted to different coordinate axes ($\mathtt{M}=2^{32}$, $k_{\mathtt{M}}=2^{10}$).}
\label{ex4dynamics}

\end{figure}

\end{example}

\medskip

\subsection*{Acknowledgement}

We are grateful to A.~Chaudhary (U Tübingen) for helpful discussions concerning the proof of Theorem \ref{thmgessamanconvergence}.

\end{document}